\theoremstyle{plain}
\newtheorem*{thma}{Theorem A}
\newtheorem*{thmb}{Theorem B}
\newtheorem*{thmc}{Theorem C}
\newtheorem*{thmd}{Theorem D}
\newtheorem*{thme}{Theorem E}
\newtheorem*{thmf}{Theorem F}
\newtheorem*{lemg}{Lemma G}
\newtheorem*{thmh}{Theorem H}
\newtheorem{theorem}{Theorem}[section]
\newtheorem{lemma}[theorem]{Lemma}
\newtheorem{corollary}[theorem]{Corollary}
\theoremstyle{remark}
\newtheorem{remark}[theorem]{Remark}
\newtheorem{remark*}{Remark}
\newcommand{\Z}{\mathbb{Z}}
\newcommand{\R}{\mathbb{R}}
\newcommand{\C}{\mathbb{C}\mkern1mu}
\renewcommand{\H}{\mathbb{H}\mkern1mu}
\newcommand{\RP}{\mathbb{R\mkern1mu P}}
\newcommand{\CP}{\mathbb{C\mkern1mu P}}
\newcommand{\HP}{\mathbb{H\mkern1mu P}}
\newcommand{\OP}{\mathbb{O\mkern1mu P}}
\newcommand{\Sph}{\mathbb{S}}
\DeclareMathOperator{\tr}{trace}
\DeclareMathOperator{\id}{id}
\DeclareMathOperator{\Ad}{Ad}
\DeclareMathOperator{\ad}{ad}
\DeclareMathOperator{\codim}{codim}
\newcommand{\tp}{^{\mathrm{t}}}
\newcommand{\one}{{\mathchoice
{\mathrm 1\mskip-4.2mu\mathrm l}{\mathrm 1\mskip-4.2mu\mathrm l}
{\mathrm 1\mskip-3.9mu\mathrm l}{\mathrm 1\mskip-4.0mu\mathrm l}}}
\newcommand{\bmat}{\left(\begin{smallmatrix}}
\newcommand{\emat}{\end{smallmatrix}\right)}
\newcommand{\abs}[1]{\vert #1\vert}
\newcommand{\SO}{\mathrm{SO}}
\newcommand{\OO}{\mathrm{O}}
\newcommand{\SU}{\mathrm{SU}}
\newcommand{\Syp}{\mathrm{Sp}}
\newcommand{\Spin}{\mathrm{Spin}}
\newcommand{\Gtwo}{\mathrm{G}_2}
\newcommand{\Ffour}{\mathrm{F}_4}
\newcommand{\ab}{\mathfrak{a}}
\newcommand{\af}{\mathfrak{f}}
\newcommand{\ag}{\mathfrak{g}}
\newcommand{\ah}{\mathfrak{h}}
\newcommand{\ak}{\mathfrak{k}}
\newcommand{\am}{\mathfrak{m}}
\newcommand{\ap}{\mathfrak{p}}
\newcommand{\aq}{\mathfrak{q}}
\newcommand{\aso}{\mathfrak{so}}
\DeclareMathOperator{\spann}{span}
\newcommand{\nor}{\mathrm{nor}}
\begin{document}

\title{Harmonic self-maps of cohomogeneity one manifolds}
\author{Thomas P\"uttmann}
\address{Ruhr-Universit\"at Bochum\\
Fakult\"at f\"ur Mathematik\\
44780 Bochum\\
Germany}
\email{Thomas.Puettmann@rub.de}
\author{Anna Siffert}
\thanks{The second named author was supported by DFG-grant SI 2077/1-1. She would also like to thank the Max-Planck-Institut Bonn for financial support and excellent working conditions. Both authors would like to thank Wolfgang Ziller for valuable comments.}
\address{Max-Planck-Institut f\"ur Mathematik\\
Vivatsgasse 7\\
53111 Bonn\\
Germany}
\email{Siffert@mpim-bonn.mpg.de}

\subjclass[2010]{Primary 58E20; Secondary 57S15, 34B15, 55M25}

\begin{abstract}
We develop the theory of equivariant harmonic self-maps of compact cohomogeneity one manifolds and construct new harmonic self-maps of the compact Lie groups $\mathrm{SO}(4\ell+2)$, $\ell \ge 1$ with degree~$-3$, of $\mathrm{SO}(8)$, $\SO(14)$ and $\mathrm{SO}(26)$ with degree $-5$ each, of $\mathrm{SO}(10)$ with degree $-7$, and of $\mathrm{SO}(14)$ with degree $-11$ by exhibiting linear solutions to non-linear singular boundary value problems.
\end{abstract}

\maketitle

\section{Introduction}
In this paper we develop the theory of equivariant harmonic self-maps of compact cohomogeneity one manifolds and construct new harmonic self-maps of the compact Lie groups $\SO(4\ell+2)$, $\ell \ge 1$ with degree~$-3$, of $\SO(8)$, $\SO(14)$ and $\SO(26)$ with degree $-5$ each, of $\SO(10)$ with degree $-7$, and of $\SO(14)$ with degree $-11$ by exhibiting linear solutions to non-linear singular boundary value problems.

\smallskip

Topologically non-trivial self-maps of compact manifolds are difficult to construct. It is natural to look for constructions in the presence of symmetries. Since every equivariant self-map of a compact homogeneous space is a diffeomorphism the homogeneous setting is too restrictive to be of interest. The next step is to consider cohomogeneity one manifolds.

\smallskip

Urakawa \cite{urakawa} calculated the tension fields of equivariant maps between cohomogeneity one manifolds and constructed some new examples of harmonic maps. For our purposes, however, the hypothesis on the actions and the invariant metrics given in this paper are too restrictive. By our more geometric approach, we extend the results of \cite{urakawa} in the context of self-maps in several directions. Most notably, we employ the construction of topologically non-trivial self-maps of cohomogeneity one manifolds given by the first named author in \cite{puttmann}.

\smallskip

Let $M$ be a Riemannian manifold with an isometric action $G\times M\to M$ of a compact Lie group $G$ such that the orbit space $M/G$ is isometric to a closed interval $[0,L]$ and such that the Weyl group $W$ of the action is finite.
In \cite{puttmann} the first named author constructed an infinite family of equivariant self-maps of $M$ by mapping $g\cdot\gamma(t)$ to $g\cdot\gamma(kt)$. 
Here, $\gamma$ is a unit speed normal geodesic such that $\gamma(0)$ is contained in one of the non-principal orbits. 
The integer $k$ is of the form $j\abs{W}/2+1$ with $j\in 2\Z$ (depending on the action odd integers $j$ might also be allowed). We call the map $g\cdot\gamma(t) \mapsto g\cdot\gamma(kt)$ the {\em $k$-map} of $M$.
The  degree of a $k$-map is equal to $k$ if the codimensions of the non-principal orbits are both odd, and equal to $0$ or $\pm 1$ otherwise.

Given $M$, there is the natural question of whether some of the $k$-maps are harmonic. More generally, we consider the equivariant {\em $(k,r)$-maps}, i.e., the maps
\begin{gather*}
  g\cdot\gamma(t)\mapsto g\cdot\gamma(r(t))
\end{gather*}
where $r: [0,L] \to \R$ is a smooth function with $r(0) = 0$ and $r(L) = kL$. 
 Any $(k,r)$-map is clearly equivariantly homotopic to the corresponding $k$-map. A $(k,r)$-map of $M$ is harmonic if and only if its tension field $\tau$ vanishes. The tension field splits into two natural components, the component $\tau^{\tan}$ tangential to the orbits and the component $\tau^{\nor}$ perpendicular to the orbits.
In order to state the normal component of the tension field in a computationally convenient way, we need to introduce some notation. Let
\begin{gather*}
  \Pi_t^{r(t)}: T_{\gamma(t)} (G\cdot\gamma(t)) \to T_{\gamma(r(t))} (G\cdot \gamma(r(t))
\end{gather*}
denote the parallel transport along the normal geodesic~$\gamma$. We have another natural but not neccessarily isometric homomorphism between the two tangent spaces $T_{\gamma(t)} (G\cdot\gamma(t))$ and $T_{\gamma(r(t))} (G\cdot\gamma(r(t)))$, namely, the action field homomorphism given by $X^{\ast}_{\vert\gamma(t)} \mapsto X^{\ast}_{\vert \gamma(r(t))}$. Let $J_t^{r(t)}$ denote the endomorphism of $T_{\gamma(t)} (G\cdot\gamma(t))$ given by composing the action field homomorphism with $(\Pi_t^{r(t)})^{-1} = \Pi_{r(t)}^t$.

\begin{thma}
The normal component of the tension field of a $(k,r)$-map of $M$ is given by
\begin{gather*}
  \tau^{\nor}_{\vert\gamma(t)} = \ddot r(t) - \dot r(t) \tr S_{\vert \gamma(t)}
    + \tr\, (J_t^{r(t)})^{\ast} (\Pi_t^{r(t)})^{-1} S_{\vert \gamma(r(t))} \Pi_t^{r(t)} J_t^{r(t)}
\end{gather*}
for $0 < t < L$. Here, $S_{\vert \gamma(t)}$ denotes the shape operator of the orbit $G\cdot \gamma(t)$ at $\gamma(t)$ and $(J_t^{r(t)})^{\ast}$ denotes the adjoint endomorphism of $J_t^{r(t)}$.
\end{thma}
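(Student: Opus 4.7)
The plan is to compute the tension field at a generic point $p = \gamma(t)$ with $0 < t < L$ using a frame adapted to the cohomogeneity one structure: the unit normal $\dot\gamma(t)$ together with an orthonormal basis $v_1,\dots,v_n$ of $T_{\gamma(t)}(G\cdot\gamma(t))$ obtained by choosing $X_1,\dots,X_n \in \mathfrak{g}$ with $X_i^{\ast}|_{\gamma(t)} = v_i$. With this frame, $\tau(\varphi)|_p$ splits naturally into the contribution of $\dot\gamma$ and the sum $\sum_i(\nabla^{\varphi}_{X_i^{\ast}} d\varphi(X_i^{\ast}) - d\varphi(\nabla_{X_i^{\ast}}X_i^{\ast}))|_p$. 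Only the projections onto $\dot\gamma(r(t))$ are needed, which substantially simplifies the bookkeeping.

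For the normal direction I would use $d\varphi(\dot\gamma(t)) = \dot r(t)\,\dot\gamma(r(t))$ as a section of $\varphi^*TM$ along $\gamma$ and differentiate covariantly. Since $\gamma$ is a geodesic, $\nabla_{\dot\gamma} \dot\gamma = 0$, so this yields precisely $\ddot r(t)\,\dot\gamma(r(t))$, contributing $\ddot r(t)$ to $\tau^{\nor}$. For the tangential directions I would exploit the $G$-equivariance of $\varphi$: the section $d\varphi(X_i^{\ast}) \in \Gamma(\varphi^*TM)$ coincides with the Killing field $X_i^{\ast}$ pulled back along $\varphi$, so its covariant derivative in the direction $X_i^{\ast}|_p$ (whose image under $d\varphi$ is $X_i^{\ast}|_{\varphi(p)}$) is simply $(\nabla_{X_i^{\ast}} X_i^{\ast})|_{\varphi(p)}$. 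Hence the tangential terms reduce to evaluating the ambient covariant derivatives $\nabla_{X_i^{\ast}} X_i^{\ast}$ at the two points $p$ and $\varphi(p)$.

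To extract normal components I would invoke the shape operator: for any $u \in T_q(G\cdot q)$, the normal part of $\nabla_u X^{\ast}$ equals $\langle S_{\vert q} u, X^{\ast}|_q\rangle$ times the outward unit normal. At $p = \gamma(t)$ this gives $\sum_i \langle S_{\vert\gamma(t)} v_i, v_i\rangle = \tr S_{\vert\gamma(t)}$, which under $d\varphi$ picks up the factor $\dot r(t)$ (since $d\varphi(\dot\gamma(t)) = \dot r(t)\dot\gamma(r(t))$), producing the middle term $-\dot r(t)\tr S_{\vert\gamma(t)}$. At $\varphi(p) = \gamma(r(t))$ the contribution is $\sum_i \langle S_{\vert\gamma(r(t))} X_i^{\ast}|_{\varphi(p)}, X_i^{\ast}|_{\varphi(p)}\rangle$, and unwinding the definition $X_i^{\ast}|_{\varphi(p)} = \Pi_t^{r(t)} J_t^{r(t)} v_i$ together with the fact that $\Pi_t^{r(t)}$ is an isometry (so its adjoint equals $(\Pi_t^{r(t)})^{-1} = \Pi_{r(t)}^t$) rewrites this sum as the trace $\tr\bigl((J_t^{r(t)})^{\ast}(\Pi_t^{r(t)})^{-1} S_{\vert\gamma(r(t))}\Pi_t^{r(t)} J_t^{r(t)}\bigr)$ over the orthonormal basis $\{v_i\}$ of $T_{\gamma(t)}(G\cdot\gamma(t))$. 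Summing all three contributions gives the stated formula.

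The routine parts are the geodesic and shape-operator computations. The main obstacle is conceptual bookkeeping: keeping straight (i) that $d\varphi$ restricted to orbit-tangent directions is the action field homomorphism rather than parallel transport, (ii) that one must transport back to $T_{\gamma(t)}(G\cdot\gamma(t))$ to express the second trace, and (iii) that although $J_t^{r(t)}$ is not an isometry, parallel transport $\Pi_t^{r(t)}$ is, so only $J_t^{r(t)}$ produces a nontrivial adjoint in the final expression.
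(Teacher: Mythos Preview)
Your proposal is correct and follows essentially the same route as the paper. The paper likewise reduces the tension field at $\gamma(t)$ to $\ddot r(t)\,\dot\gamma(r(t)) + \sum_{\mu}\bigl(\nabla_{E_\mu^{\ast}}E_\mu^{\ast}|_{\gamma(r(t))} - d\psi\cdot\nabla_{E_\mu^{\ast}}E_\mu^{\ast}|_{\gamma(t)}\bigr)$, invokes $\langle \nabla_{X^{\ast}}Y^{\ast},T\rangle = \langle X^{\ast}, S\, Y^{\ast}\rangle$ to extract normal parts, and rewrites the sum at $\gamma(r(t))$ via $E_\mu^{\ast}|_{\gamma(r(t))} = \Pi_t^{r(t)} J_t^{r(t)} e_\mu$ together with the isometry of parallel transport to obtain the trace expression.
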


\smallskip

The principal isotropy groups $H = G_{\gamma(t)}$ along the normal geodesics are constant for $0 < t < L$. Let $Q$ be a fixed biinvariant metric on $G$ and let $\mathfrak{n}$ denote the orthonormal complement of the Lie algebra $\ah$ of the principal isotropy group $H$ in~$\ag$.

\begin{thmb}
The tangential component of the tension field of a $(k,r)$-map of $M$ is given by
\begin{gather*}
  \tau^{\tan}_{\vert\gamma(t)} = -\sum_{\mu,\nu=1}^n \langle [E_{\mu},F_{\nu}]^{\ast},E_{\mu}^{\ast} \rangle_{\vert\gamma(r(t))} F^{\ast}_{\nu\vert\gamma(r(t))}
\end{gather*}
for $0 < t < L$. Here, $E_1,\ldots,E_n \in \mathfrak{n}$ and $F_1,\ldots,F_n \in \mathfrak{n}$ are such that $E^{\ast}_{1\vert\gamma(t)}$,\ldots, $E^{\ast}_{n\vert\gamma(t)}$ form an orthonormal basis of $T_{\gamma(t)}(G\cdot\gamma(t))$ and $F^{\ast}_{1\vert\gamma(r(t))},\ldots, F^{\ast}_{n\vert\gamma(r(t))}$ form an orthonormal basis of $T_{\gamma(r(t))}(G\cdot\gamma(r(t)))$.
\end{thmb}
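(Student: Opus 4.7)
The plan is to compute $\tau(\phi)|_{\gamma(r(t))}$ by tracing $\nabla d\phi$ over the orthonormal basis of $T_{\gamma(t)}M$ formed by $E^{\ast}_{1\vert\gamma(t)},\ldots,E^{\ast}_{n\vert\gamma(t)}$ together with the unit normal $\dot\gamma(t)$, and then extracting the component tangent to $G\cdot\gamma(r(t))$ by pairing with $F^{\ast}_{1\vert\gamma(r(t))},\ldots,F^{\ast}_{n\vert\gamma(r(t))}$. Exactly as in the proof of Theorem A, $(\nabla d\phi)(\dot\gamma,\dot\gamma)|_{\gamma(t)} = \ddot r(t)\,\dot\gamma(r(t))$ is normal to $G\cdot\gamma(r(t))$ and therefore drops out of $\tau^{\tan}$, so only the $n$ tangential summands matter.

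For each $\mu$ I would extend $E^{\ast}_{\mu\vert\gamma(t)}$ to the globally defined action field $E^{\ast}_\mu$ on $M$. Since $\phi$ is $G$-equivariant, $d\phi\circ E^{\ast}_\mu = E^{\ast}_\mu\circ\phi$ as a section of $\phi^{-1}TM$, and the pullback-connection identity $\nabla^\phi_v(X^{\ast}\circ\phi) = \nabla^M_{d\phi(v)}X^{\ast}$ yields
\begin{gather*}
(\nabla d\phi)(E^{\ast}_{\mu\vert\gamma(t)},E^{\ast}_{\mu\vert\gamma(t)})
= \nabla^M_{E^{\ast}_{\mu\vert\gamma(r(t))}} E^{\ast}_\mu - d\phi|_{\gamma(t)}\bigl(\nabla^M_{E^{\ast}_\mu} E^{\ast}_\mu|_{\gamma(t)}\bigr).
\end{gather*}
Pairing with $F^{\ast}_{\nu\vert\gamma(r(t))}$ and invoking the Koszul formula for a pair of action fields
\begin{gather*}
2\langle\nabla_{X^{\ast}}Y^{\ast},Z^{\ast}\rangle = -\langle[X,Y]^{\ast},Z^{\ast}\rangle - \langle[X,Z]^{\ast},Y^{\ast}\rangle - \langle[Y,Z]^{\ast},X^{\ast}\rangle
\end{gather*}
specialized to $Y=X$, the first summand above contributes precisely $-\sum_\mu\langle[E_\mu,F_\nu]^{\ast},E^{\ast}_\mu\rangle_{\gamma(r(t))}$, i.e.\ the right-hand side of the claimed formula.

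The second summand $d\phi|_{\gamma(t)}(\nabla^M_{E^{\ast}_\mu}E^{\ast}_\mu|_{\gamma(t)})$ requires more care. Its component normal to the orbit is killed by the pairing with $F^{\ast}_{\nu\vert\gamma(r(t))}$, while its tangential part, expanded in $\{E^{\ast}_{\sigma\vert\gamma(t)}\}$ via the same Koszul formula and pushed through the action-field homomorphism, produces an extra contribution of the form
\begin{gather*}
\sum_\sigma\Bigl(\sum_\mu\langle[E_\mu,E_\sigma]^{\ast},E^{\ast}_\mu\rangle_{\gamma(t)}\Bigr)\,\langle E^{\ast}_\sigma,F^{\ast}_\nu\rangle_{\gamma(r(t))}.
\end{gather*}

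The hardest step, and the one I expect to be the main obstacle, is to show that the inner bracketed sum vanishes for every $\sigma$. Unwinding the identification of $\an$ with $T_{\gamma(t)}(G\cdot\gamma(t))$ via action fields, this inner sum equals $-\tr\bigl(\pi_\an\ad(E_\sigma)|_\an\bigr)$, where $\pi_\an$ denotes the $Q$-orthogonal projection onto $\an$. Because the trace of a linear endomorphism is independent of any choice of inner product used to read off its diagonal, it may be computed in a $Q$-orthonormal basis of $\an$: biinvariance of $Q$ forces $\ad(E_\sigma)$ to be $Q$-skew on $\ag$, whence the block $\pi_\an\ad(E_\sigma)|_\an$ is also $Q$-skew on $\an$ and has vanishing diagonal. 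Therefore the extra contribution drops out, and one is left with the formula stated in the theorem.
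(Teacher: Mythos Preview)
Your argument is correct and reaches the same formula as the paper, using the same identity $\langle\nabla_{X^*}X^*,Y^*\rangle=-\langle[X,Y]^*,X^*\rangle$ for the first summand. The one genuine difference is how you dispose of the second summand $d\psi\bigl(\sum_\mu(\nabla_{E^*_\mu}E^*_\mu)^{\tan}_{\gamma(t)}\bigr)$. The paper first records that $\sum_\mu\nabla_{E^*_\mu}E^*_\mu$ is independent of the choice of orthonormal basis $\{E^*_{\mu\vert\gamma(t)}\}$, then rewrites $(\nabla_{X^*}X^*)^{\tan}_{\gamma(t)}=(P_t^{-1}[X,P_tX])^*_{\gamma(t)}$ and picks the $E_\mu$ to be eigenvectors of the metric endomorphism $P_t$; in that special basis one gets $[E_\mu,P_tE_\mu]=0$ termwise. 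You instead keep an arbitrary basis and recognize $\sum_\mu\langle[E_\mu,E_\sigma]^*,E^*_\mu\rangle_{\gamma(t)}$ as $-\tr\bigl(\pi_{\an}\ad(E_\sigma)\vert_{\an}\bigr)$, which vanishes because $\ad(E_\sigma)$ is $Q$-skew. Your route is self-contained and avoids introducing $P_t$ and the diagonalization step; the paper's route makes the vanishing visible term by term but needs the basis-independence remark beforehand. The one place worth tightening in your write-up is the ``unwinding'' sentence: the cleanest justification is that $\{E_\mu\}$ and $\{P_tE_\mu\}$ are $Q$-dual bases of $\an$ (since $Q(E_\mu,P_tE_\nu)=\langle E^*_\mu,E^*_\nu\rangle_{\gamma(t)}=\delta_{\mu\nu}$), so $\sum_\mu Q(AE_\mu,P_tE_\mu)=\tr A$ for any endomorphism $A$ of $\an$, and the inner sum is exactly this with $A=-\pi_{\an}\ad(E_\sigma)\vert_{\an}$.
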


For self-maps, Theorem\,A and Theorem\,B are a more general and more geometric version of Theorem\,2.2 of \cite{urakawa}.

\smallskip

A $(k,r)$-map of $M$ is harmonic if both, the normal and the tangential component of the tension field vanish. Theorem\,A leads to the non-linear singular boundary value problem
\begin{gather*}
  0 = \ddot r(t) - \dot r(t) \tr S_{\vert \gamma(t)}
    + \tr\, (J_t^{r(t)})^{\ast} (\Pi_t^{r(t)})^{-1} S_{\vert \gamma(r(t))} \Pi_t^{r(t)} J_t^{r(t)}
\end{gather*}
for functions $r: \,]0,L[\, \to \R$ with $\lim_{t\to 0} r(t) = 0$ and $\lim_{t\to L} r(t) = kL$. If $r$ is a solution of this boundary value problem, the condition provided by Theorem\,B is an infinite set of algebraic equations. It is hence clear that the tangential component of the tension field of a $(k,r)$-map will only vanish in rather special geometric situations. 
Simple examples where the tangential components fail to vanish for all $(k,r)$-maps except the identity are provided by 
$\Sph^2 \times \Sph^2_{\rho}$ where $\Sph^2_{\rho}$ denotes the sphere of radius $\rho$ and $\rho^2$ is a rational number $\neq 1$.

\smallskip

The special geometric situations that we consider here are the cohomogeneity one actions on spheres and their lifts to the orthogonal groups that act transitively on the spheres. For most of these actions the tangential components of the tension fields turn out to vanish for all $(k,r)$-maps, no matter if $r$ solves the boundary value problem or not. Moreover, there are at most two eigenvalues of the Jacobi operator along a normal geodesic. This fact is of central importance for the practical evaluation of the ODE given in Theorem\,A.

\smallskip

The cohomogeneity one actions on spheres were classified by Hsiang and Lawson \cite{hsiang}. They showed that each of these actions is orbit equivalent to the isotropy representation of a Riemannian symmetric space of rank $2$.
The orbits of any isometric cohomogeneity one action $G\times \Sph^{n+1} \to \Sph^{n+1}$ yield an isoparametric foliation of the sphere. Takagi and Takahashi \cite{takagi} determined the number $g$ of distinct principal curvatures of the orbits and their multiplicities $m_0,\ldots,m_{g-1}$. It turned out that
\begin{gather*}
  m_0 = m_2 = \ldots = m_{g-2} \quad \text{and} \quad m_1 = m_3 = \ldots = m_{g-1}.
\end{gather*}
In particular, $n = \frac{m_0+m_1}{2} g$. M\"unzner \cite{munzner} later showed that this is a general property of isoparametric foliations of spheres. Up to ordering of $m_0$ and $m_1$ there are only actions with the following $(g,m_0,m_1)$:
\begin{multline*}
 (1,m,m), (2,m_0,m_1), (3,1,1), (3,2,2), (3,4,4), (3,8,8),\\
   (4,m_0,1), (4,2,2), (4,2,2\ell+1), (4,4,4\ell+3), (4,4,5), (4,6,9), (6,1,1), (6,2,2).
\end{multline*}
The classification shows that all cohomogeneity one actions with given data $(g,m_0,m_1)$ are orbit equivalent except in the case $(4,2,1)$ where two different classes exist. An explicit list with detailed information can be found in \cite{gwz}.

\smallskip

We call a cohomogeneity one action on a sphere with the data $(g,m_0,m_1)$ briefly a {\em $(g,m_0,m_1)$-action}. If $m_0 = m_1 =: m$ (by M\"unzner's results, this is necessarily true for odd $g$) we simply call the action a $(g,m)$-action. Note that for each of all these actions the Weyl group is the dihedral group $D_g$ of order $2g$ and there are $k$-maps for all integers $k = jg+1$ with $j\in \Z$.

\begin{thmc}
Given a $(g,m_0,m_1)$-action on $\Sph^{n+1}$ with $n = \frac{m_0+m_1}{2}g$, a $k$-map is harmonic if and only if $k=1$, or $(g=2$ and $k=-1)$, or $(m_0 = m_1$ and $k = 1-g)$.
\end{thmc}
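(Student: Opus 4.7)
The plan is to apply Theorems~A and~B to the $k$-map (which corresponds to $r(t)=kt$, so $\dot r=k$ and $\ddot r=0$) and to exploit the very constrained geometry of isoparametric foliations on spheres.

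I would begin by observing that for a $(g,m_0,m_1)$-action the principal curvature distributions $V_1,\ldots,V_g$ of the orbits are invariant under both the shape operator and the action-field endomorphism and are parallel along the normal geodesic~$\gamma$. This simultaneously diagonalises $S_{\vert\gamma(t)}$, $\Pi_t^s$ and $J_t^s$: on the $i$-th block (of dimension $\mu_i=m_{(i-1)\bmod 2}$, with phase $\phi_i=\pi-(i-1)\pi/g$) the shape operator has eigenvalue $\cot(\phi_i-t)$ and $J_t^s$ has eigenvalue $\sin(\phi_i-s)/\sin(\phi_i-t)$. Because $k\equiv 1\pmod g$, the quantity $(1-k)\phi_i$ is an integer multiple of~$\pi$, so $\sin(2(\phi_i-kt))=\sin(2k(\phi_i-t))$. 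Theorem~A therefore reduces to
\[
\tau^{\nor}(t)=\sum_{i=1}^{g}\mu_i\,Q_k(\phi_i-t),\qquad Q_k(u):=-k\cot u+\frac{\sin(2ku)}{2\sin^2 u}.
\]

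The next step is the finite Fourier expansion $Q_k(u)=-2\sum_{j=1}^{k-1}(k-j)\sin(2ju)$ for $k\ge 1$ (with $Q_{-k}=-Q_k$), proved by induction on~$k$ using the telescoping identity $Q_k-Q_{k-1}=-2\sin(ku)\sin((k-1)u)/\sin u$. This turns $\tau^{\nor}\equiv 0$ into the finite system of algebraic identities $\sum_{i=1}^{g}\mu_i\,e^{2\cplxi j\phi_i}=0$ for $j=1,\ldots,|k|-1$. Setting $\omega=e^{-2\cplxi\pi/g}$ one has $e^{2\cplxi j\phi_i}=\omega^{j(i-1)}$, and separating odd and even~$i$ rewrites the sum as $(m_0+m_1\omega^j)\sum_{\ell=0}^{g/2-1}\omega^{2j\ell}$ when $g$ is even (and the analogous single geometric sum when $g$ is odd, where $m_0=m_1$ by M\"unzner). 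The inner geometric sum vanishes unless $(g/2)\mid j$; when it is nonzero the prefactor equals $m_0+m_1$ if $g\mid j$ and $m_0-m_1$ if $j$ is an odd multiple of~$g/2$. Running over $j=1,\ldots,|k|-1$ produces precisely the trichotomy of the theorem: $|k|\le g/2$ imposes no constraint (giving $k=1$ always, and additionally $k=-1$ when $g=2$); $g/2<|k|\le g$ forces $m_0=m_1$ (automatic for~$g$ odd) and yields $k=1-g$; and $|k|\ge g+1$ is ruled out because it would require $m_0+m_1=0$.

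Finally I would verify that the tangential component given by Theorem~B also vanishes at the surviving values of~$k$. This is trivial for $k=1$. For $g=2$ and $k=-1$ the map is an isometry (realised, under the $\SO(p)\times\SO(q)$-action on $\Sph^{p+q-1}$, as $(x,y)\mapsto(x,-y)$), so its tension field is identically zero. For $k=1-g$ with $m_0=m_1$, the vanishing must be checked case by case using the Lie-algebraic structure of the actions in the Hsiang--Lawson classification and the extra $\Z_2$-symmetry of the Cartan--M\"unzner polynomial $F\leftrightarrow-F$ that is available precisely when $m_0=m_1$; this is what forces $\sum_\mu\langle[E_\mu,F_\nu]^*,E_\mu^*\rangle_{\vert\gamma(r(t))}=0$ for every~$\nu$. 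I expect this tangential verification to be the main technical obstacle, in contrast to the uniform ODE analysis that handles the normal component for all actions simultaneously.
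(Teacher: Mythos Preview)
Your treatment of the normal component is correct and takes a genuinely different route from the paper. The paper first collapses the sum $\sum_i \mu_i\,\sin 2(r-i\pi/g)/\sin^2(t-i\pi/g)$ into a closed-form ODE via a bespoke trigonometric identity (its Lemma~\ref{2parac}), obtaining Theorem~E, and then proves Lemma~G by plugging $r(t)=kt$ into that ODE and evaluating at the single test points $t=\pi/(2g)$ (for $m_0\neq m_1$) and $t=\pi/(4g)$ (for $m_0=m_1$) to force $|k|\le g$. You instead keep the block sum, Fourier-expand $Q_k(u)=-2\sum_{j=1}^{|k|-1}(|k|-j)\sin 2ju$, and reduce $\tau^{\nor}\equiv 0$ to the root-of-unity conditions $\sum_i \mu_i\,\omega^{j(i-1)}=0$. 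Your method is more systematic---it characterises exactly which $k$ work rather than bounding $|k|$ by an ad hoc evaluation---while the paper's method is more elementary, needing no induction on $k$. Both yield the same trichotomy.

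For the tangential component your outline is accurate but incomplete. The observation that for $g=2$ the $(-1)$-map is the isometry $(x,y)\mapsto(x,-y)$ is a clean shortcut the paper does not use. However, for $k=1-g$ with $m_0=m_1$ you defer to a case-by-case Lie-algebraic check; this is precisely what the paper carries out as its Theorem~H (all of Section~\ref{tangentialparts}), and it is indeed the bulk of the work. The paper's strategy there is not a $\Z_2$-symmetry of the Cartan--M\"unzner polynomial as you suggest, but rather: for $g\le 3$ and $(g,m)\in\{(4,2),(6,2)\}$ the $\am_i$ are pairwise inequivalent irreducible $H$-modules, so Schur's lemma forces $P_t$ to be block-diagonal and Theorem~\ref{tangentialone} kills $\tau^{\tan}$ for \emph{every} $(k,r)$-map; for $(4,m_0,1)$ and $(6,1)$ one instead computes that the $H$-fixed set in the tangent space to each principal orbit is just the normal direction. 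You should expect to reproduce one of these two mechanisms rather than invoke the polynomial symmetry.
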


With this theorem we reproduce some known harmonic self-maps of spheres by a different method: 
In \cite{siffertone} the $(1-g)$-map of $\Sph^{mg+1}$ is identified with the gradient map of the Cartan-M\"unzner polynomial of the associated isoparametric foliation. 
It was previously known that the Cartan-M\"unzner polynomial and its gradient map are harmonic if all multiplicities are equal \cite{eellsrato}.  The degrees of the gradient map were previously obtained in \cite{peng} and \cite{ge}.

\smallskip

Any isometric cohomogeneity one action $G\times \Sph^{n+1}\to \Sph^{n+1}$ can be lifted to an isometric cohomogeneity one action of $G\times\SO(n+1)$ on $\SO(n+2)$ with the metric $\tfrac{1}{2} \tr X\tp Y$ by
\begin{gather*}
  G \times \SO(n+1) \times \SO(n+2) \to \SO(n+2), \quad \left( A,\bmat 1 & \\ & B\emat \right) \cdot C = AC\bmat 1 & \\ & B^{-1}\emat.
\end{gather*}
We call the lift of a $(g,m_0,m_1)$-action on $\Sph^{n+1}$ a {\em $(g,m_0,m_1)$-action on $\SO(n+2)$} (or simply a $(g,m)$-action if $m_0 = m_1$). This definition fits to the fact that the Weyl group of the lifted action is again the dihedral group $D_g$. In contrast to the actions on spheres, $k$-maps exist for the lifted actions only for all integers $k = jg+1$ with $j\in 2\Z$.

\begin{thmd}
Given a $(g,m_0,m_1)$-action on $\SO(n+2)$ with $n = \frac{m_0+m_1}{2}g$, a $k$-map is harmonic if and only if $k = 1$ or $(m_0 = m_1$ and $k=1-2g)$.
\end{thmd}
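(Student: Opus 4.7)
The plan is to follow the scheme used for Theorem~C: substitute the linear ansatz $r(t)=kt$ into the tension-field formulas from Theorems~A and~B and extract the resulting algebraic constraints on $k$. With $\dot r\equiv k$ and $\ddot r\equiv 0$ the normal-component equation reduces to
\begin{equation*}
  k\,\tr S_{\vert\gamma(t)} = \tr\,(J_t^{kt})^{\ast}(\Pi_t^{kt})^{-1} S_{\vert\gamma(kt)} \Pi_t^{kt} J_t^{kt},
\end{equation*}
while the tangential component (Theorem~B) must vanish identically. The latter follows by a Schur-type argument just as for Theorem~C: the metric on $\SO(n+2)$ is biinvariant, so the trilinear form $\langle[\,\cdot\,,\,\cdot\,],\,\cdot\,\rangle$ on $\mathfrak{n}$ is totally skew, and the isotypic decomposition of the principal isotropy representation coincides with the Jacobi-eigenspace decomposition, forcing the bilinear sum in Theorem~B to collapse.

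For the normal equation I would exploit the Riemannian submersion $\pi\colon\SO(n+2)\to\Sph^{n+1}$, $C\mapsto Ce_1$, which intertwines the lifted action with the $(g,m_0,m_1)$-action on the sphere. A principal $\SO(n+2)$-orbit fibers over the corresponding sphere orbit with fiber a coset of $\SO(n+1)$, and the tangent space of the orbit splits orthogonally into horizontal lifts of the sphere orbit's tangent space and vertical fiber directions. Under this decomposition the shape operator $S$ has a horizontal block that inherits the Cartan--M\"unzner principal curvatures $\cot\alpha_j(t)$ with multiplicities alternating between $m_0$ and $m_1$, while the vertical block vanishes because $\SO(n+1)$-fiber cosets are totally geodesic in $\SO(n+2)$ with respect to the biinvariant metric. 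The action-field homomorphism $J_t^{kt}$ respects the splitting, acting by $\pm 1$ on horizontal directions (exactly as for the sphere action) and by $+1$ on vertical directions, whose action fields are parallel along the one-parameter subgroup~$\gamma$.

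Assembling these contributions turns the normal condition into
\begin{equation*}
  k \sum_{j=0}^{g-1} m_j \cot\alpha_j(t) = \sum_{j=0}^{g-1} m_j\,\varepsilon_j(k)\,\cot\alpha_j(kt),
\end{equation*}
where $m_j=m_0$ for even $j$, $m_j=m_1$ for odd $j$, and $\varepsilon_j(k)\in\{\pm1\}$ records the sign from $J_t^{kt}$. The parity restriction $k\in 2g\Z+1$ fixes these signs uniformly, and the addition formulas for $\cot$ reduce the identity to a polynomial condition in $k$ whose admissible integer solutions are $k=1$ unconditionally and $k=1-2g$ precisely when $m_0=m_1$, the equal-multiplicity case being where the pairing $\cot(\alpha)+\cot(\pi-\alpha)=0$ across paired indices supplies the extra cancellation. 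The main obstacle is establishing the two geometric facts underpinning the second step: that the vertical $\SO(n+1)$-fibers are totally geodesic along $\gamma$ and that $J_t^{kt}$ acts trivially on them. Once these are in place, the reduction is a parallel adaptation of the proof of Theorem~C, and the narrower admissible range $k\in 2g\Z+1$ (in contrast to $g\Z+1$ in the spherical case) is what eliminates $k=-1$ and $k=1-g$ from the list, leaving only the harmonic maps claimed.
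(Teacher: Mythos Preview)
Your normal-component analysis contains a structural error that would make the argument give the wrong answer. The horizontal/vertical splitting coming from the Riemannian submersion $\pi\colon\SO(n+2)\to\Sph^{n+1}$ does \emph{not} diagonalize the shape operator of the principal $\tilde G$-orbits. While it is true that the fibers $\tilde\gamma(t)\cdot\SO(n+1)$ are totally geodesic in $\SO(n+2)$, this only forces the vertical--vertical block of $S$ to vanish; there are nonzero horizontal--vertical cross terms. Consequently the horizontal block does not simply ``inherit'' the $g$ Cartan--M\"unzner curvatures from the sphere, and your displayed cotangent identity (a sum over $j=0,\ldots,g-1$) is the sphere equation, not the $\SO(n+2)$ equation.

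The paper works instead with the eigenspace decomposition of the normal Jacobi operator along $\tilde\gamma$, which splits the orbit tangent into a $2n$-dimensional piece $V$ (eigenvalue $1/4$) and an $\aso(n)$-piece (eigenvalue $0$). The key geometric fact---and the reason the answer is $k=1-2g$ rather than $k=1-g$---is that $V$ decomposes further as $\bigoplus_{i=0}^{2g-1}\widetilde\am_i^{\ast}$, one summand for each of the $2g$ singular isotropy groups along the closed normal geodesic, with shape-operator eigenvalues $-\tfrac12\cot\tfrac{t-t_i}{2}$. After the reparametrization $t\mapsto 2t$ the normal tension-field equation becomes precisely the $(2g,m_0,m_1,k)$-boundary value problem, and Lemma~G applied with $g$ replaced by $2g$ then yields $k=1$ or $(m_0=m_1$ and $k=1-2g)$. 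If you instead feed the restriction $k\in 2g\Z+1$ into the sphere-type equation you wrote down, neither $k=-1$ nor $k=1-g$ survives and you would conclude (incorrectly) that only $k=1$ works.

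Two smaller points: the endomorphism $J_t^{kt}$ never acts by $\pm1$, on spheres or on $\SO(n+2)$; its eigenvalues are ratios of sines such as $\sin(kt-i\tfrac{\pi}{g})/\sin(t-i\tfrac{\pi}{g})$, and it is exactly their squares, combined with the shape eigenvalues, that produce the nonlinear term in the ODE. And the tangential-component argument cannot be dismissed by total skewness alone: the inner product in Theorem~B is the orbit metric at $\gamma(r(t))$, not a biinvariant metric on $\tilde G$, so $\langle[E_\mu,F_\nu]^\ast,E_\mu^\ast\rangle$ is not automatically skew in the relevant sense. The paper needs a separate analysis (Schur's lemma when the isotropy representation is multiplicity-free, and explicit Weyl-group computations otherwise) to establish Theorem~H.
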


This theorem is the source of the new concrete examples of harmonic self-maps mentioned in the opening of this introduction: the $-3$-maps of the $(2,m)$-actions on $\SO(2m+2)$ with degree $+1$ if $m$ is odd and degree $-3$ if $m$ is even; the $-5$-maps of the $(3,m)$-actions on $\SO(5)$, $\SO(8)$, $\SO(14)$, and $\SO(26)$ with degree $+1$ in the first case and $-5$ in the other cases; the $-7$-maps of the $(4,m)$-actions on $\SO(6)$ with degree $+1$ and on $\SO(10)$ with degree $-7$; the $-11$-maps of the $(6,m)$-actions on $\SO(8)$ with degree $+1$ and on $\SO(14)$ with degree $-11$.

\smallskip

In order to prove Theorems\,C and~D we need to evaluate the general expressions for the normal and tangential components of the tension fields of the $(k,r)$-maps given in Theorem\,A and~B for the $(g,m_0,m_1)$-actions on $\Sph^{n+1}$ and $\SO(n+2)$. For the normal component, a non-standard trigonometric identity (Lemma\,\ref{2parac}) is the key to obtain the following result.

\begin{thme}
Given a $(g,m_0,m_1)$-action on a sphere $\Sph^{n+1}$ with $n = \frac{m_0+m_1}{2}g$, the normal component of the tension field of a $(k,r)$-map vanishes if and only if $r$ satisfies the {\em $(g,m_0,m_1,k)$-boundary value problem}
\begin{multline*}
  0 =
  4\sin^{2} gt \cdot \ddot r(t) + \bigl( g(m_0+m_1)\sin 2gt + 2g(m_0-m_1)\sin gt \bigr)\dot r(t)\\
  - g(g -2)\sin 2(r(t)-t ) \bigl( m_0+m_1 + (m_0-m_1)\cos gt \bigr)\\
  -2g \sin\bigl(2(r(t)-t )+gt\bigr) \bigl( (m_0+m_1)\cos gt +m_0-m_1\bigr)
\end{multline*}
for functions $r: \,]0,\frac{\pi}{g}[\, \to \R$ with
\begin{gather*}
  \lim_{t\rightarrow 0}r(t)=0 \quad \text{and}\quad \lim_{t\rightarrow \tfrac{\pi}{g}}r(t)=k\tfrac{\pi}{g}
\end{gather*}
\end{thme}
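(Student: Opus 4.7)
The plan is to specialize the general formula of Theorem~A to the isoparametric setting on $\Sph^{n+1}$; this reduces the statement to evaluating the shape operator trace $\tr S_{\vert\gamma(t)}$ and the mixed trace $\tr(J_t^{r(t)})^{\ast}(\Pi_t^{r(t)})^{-1} S_{\vert\gamma(r(t))}\Pi_t^{r(t)} J_t^{r(t)}$ explicitly, and then to rearrange the resulting identity $\tau^{\nor}_{\vert\gamma(t)}=0$ into the ODE claimed.

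For the geometric input I would invoke the classical description of isoparametric foliations: along the normal geodesic $\gamma:\,]0,\pi/g[\,\to\Sph^{n+1}$ the shape operator has principal curvatures $\cot(t+\tfrac{j\pi}{g})$, $j=0,\dots,g-1$, with multiplicities $m_j=m_0$ for even $j$ and $m_j=m_1$ for odd $j$ (up to a global sign that depends only on the chosen orientation of the normal), and the corresponding curvature distributions are parallel along $\gamma$. The standard cotangent identity $\sum_{i=0}^{N-1}\cot(s+\tfrac{i\pi}{N})=N\cot(Ns)$, applied separately to the even and the odd values of $j$, yields a closed form for $\tr S_{\vert\gamma(t)}$ that, after multiplication by $4\sin^2 gt$, reproduces the coefficient of $\dot r$ in Theorem~E. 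Since the curvature distributions are parallel, $\Pi_t^{r(t)}$ preserves the eigenspace decomposition and $(\Pi_t^{r(t)})^{-1} S_{\vert\gamma(r(t))}\Pi_t^{r(t)}$ acts as $\cot(r(t)+\tfrac{j\pi}{g})$ on the $j$-th block. A Killing field tangent to the $j$-th curvature distribution restricts along $\gamma$ to a Jacobi field proportional to $\sin(\cdot+\tfrac{j\pi}{g})$, so the action field homomorphism rescales the $j$-th block by the ratio $\sin(r(t)+\tfrac{j\pi}{g})/\sin(t+\tfrac{j\pi}{g})$ and $J_t^{r(t)}$ is the self-adjoint diagonal operator with these same scalars as eigenvalues. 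Using $\sin^2\alpha\cot\alpha=\tfrac12\sin 2\alpha$, the mixed trace collapses to
\begin{gather*}
  \sum_{j=0}^{g-1}\frac{m_j\sin\bigl(2r(t)+\tfrac{2j\pi}{g}\bigr)}{2\sin^2\bigl(t+\tfrac{j\pi}{g}\bigr)}.
\end{gather*}

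Next I would multiply $\tau^{\nor}_{\vert\gamma(t)}=0$ by $4\sin^2 gt$ and rearrange. Introducing $u:=r(t)-t$ and $s_j:=t+\tfrac{j\pi}{g}$, the addition formula $\sin(2s_j+2u)=\cos 2u\sin 2s_j+\sin 2u\cos 2s_j$ combined with $\cos 2s_j=1-2\sin^2 s_j$ splits the mixed trace into three pieces: a $\cos 2u$-part proportional to $\sum_j m_j\cot s_j$ (i.e.\ again to $\tr S_{\vert\gamma(t)}$), a $\sin 2u$-part proportional to $\sum_j m_j\csc^2 s_j$, and a constant $\sin 2u$-part proportional to $\sum_j m_j=\tfrac g2(m_0+m_1)$. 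The cosecant sum is evaluated by the analogous identity $\sum_{i=0}^{N-1}\csc^2(s+\tfrac{i\pi}{N})=N^2\csc^2(Ns)$, again split over the even and odd indices. Regrouping the outcome by $(m_0+m_1)$ and $(m_0-m_1)$ and repackaging the resulting products of sines and cosines via the addition formulas for $\sin(2u+2gt)$ and $\sin(2u+gt)$ produces the right-hand side of Theorem~E.

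The principal technical obstacle lies in this last regrouping: the coefficient $g(g-2)$ in front of $\sin 2(r-t)$ emerges from the cancellation between the $g^2\sin 2u$ produced by the cosecant identity and the $-2g\sin 2u\sin^2 gt=-g\sin 2u(1-\cos 2gt)$ coming from the constant part of $\cos 2s_j/\sin^2 s_j$; this cancellation is precisely what forces this summand to degenerate when $g=2$, and it is the content of the non-standard trigonometric identity Lemma~\ref{2parac} flagged in the introduction. The boundary conditions $\lim_{t\to 0}r(t)=0$ and $\lim_{t\to\pi/g}r(t)=k\pi/g$ are the ones inherited from the definition of a $(k,r)$-map with $L=\pi/g$.
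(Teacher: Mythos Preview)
Your proposal is correct and follows essentially the same route as the paper. The paper specializes Theorem~A using exactly the eigendata you describe (the $\mathfrak m_i^\ast$ are simultaneous eigenspaces of $S_{\vert\gamma(t)}$ with eigenvalue $-\cot(t-i\pi/g)$ and of $J_t^{r(t)}$ with eigenvalue $\sin(r(t)-i\pi/g)/\sin(t-i\pi/g)$), arrives at the same intermediate expression
\[
\tau^{\nor}_{\vert\gamma(t)} = \ddot r(t) + \sum_{i=0}^{g-1} m_i \cot\bigl(t-i\tfrac{\pi}{g}\bigr)\dot r(t) - \tfrac{1}{2}\sum_{i=0}^{g-1} m_i\,\frac{\sin 2\bigl(r(t)-i\tfrac{\pi}{g}\bigr)}{\sin^2\bigl(t-i\tfrac{\pi}{g}\bigr)},
\]
and then, just as you do, splits the sums over even and odd $i$ (with $h=g/2$) and applies the cotangent identity and the non-standard identity of Lemma~\ref{2parac}. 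Your decomposition via $u=r-t$, the addition formula, and the cosecant identity $\sum_i\csc^2(s+i\pi/N)=N^2\csc^2(Ns)$ is precisely the content of the paper's proof of that lemma in Section~\ref{trigidentity}, so you are not bypassing it but reproducing it inline.

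One caution: your stated principal curvatures $\cot(t+j\pi/g)$ are off by a sign from the shape operator convention fixed in Theorem~A ($S\,X^\ast=-\nabla_{\dot\gamma}X^\ast$, giving eigenvalue $-\cot(t-i\pi/g)$). You acknowledge the orientation ambiguity, but since Theorem~A has already fixed the normal to be $\dot\gamma$, the sign is not free; if you carry your $+\cot$ through, the coefficient of $\dot r$ will come out with the wrong sign. This is easily repaired and does not affect the structure of the argument.
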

Note that in the special case where $m_0 = m_1 = m$ the ODE above simplifies to
\begin{multline}
\label{ode}
0 = 2\sin^2 gt \cdot\ddot r(t)
  + mg\sin 2gt \cdot\dot r(t) \\ -mg\big((g-1)\sin 2(r(t)-t) + \sin 2(r(t)+(g-1)t)\big).
\end{multline}

\smallskip

It is remarkable that the same boundary value problems also appear for the lifted actions. Due to the number of non-zero distinct principal curvatures and their multiplicities, it is not the $(g,m_0,m_1)$-action on $\Sph^{n+1}$ that is related to the $(g,m_0,m_1)$-action on $\SO(n+2)$ this way but the $(2g,m_0,m_1)$-action on $\Sph^{2n+1}$ (if it exists). In order to make this connection visible we have to reparametrize the normal geodesics by a factor of $2$.

\begin{thmf}
Given a $(g,m_0,m_1)$-action on $\SO(n+2)$ with $n = \frac{m_0+m_1}{2}g$, the normal component of the tension field of the map $g\cdot \tilde\gamma(2t)\to g\cdot\tilde\gamma(2r(t))$ vanishes if and only if $r$ solves the $(2g,m_0,m_1,k)$-boundary value problem.
\end{thmf}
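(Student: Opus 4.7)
The plan is to apply Theorem~A directly to the lifted cohomogeneity one action on $\SO(n+2)$ with the biinvariant metric $\tfrac{1}{2}\tr X^{\mathrm{t}}Y$, evaluate the three ingredients (shape operator, parallel transport, action field endomorphism) along a suitably chosen normal geodesic $\tilde\gamma$, and then match the resulting ODE, under the reparametrization $s = 2t$, $\tilde r(s) = 2r(t)$, with the $(2g,m_0,m_1,k)$-ODE of Theorem~E.

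First, I would fix an explicit normal geodesic. Because the lifted action is $(A,B)\cdot C = AC\bigl(\begin{smallmatrix}1 & \\ & B^{-1}\end{smallmatrix}\bigr)$, the orbit through the identity has normal space at $e$ of dimension one, spanned by a rank-two skew-symmetric matrix $X_0$ whose projection onto the first column gives the initial velocity of the sphere normal geodesic $\gamma$. The geodesic is then $\tilde\gamma(s)=\exp(sX_0)$, a rotation by angle $s$ in a distinguished $2$-plane. Since the metric is biinvariant, parallel transport along $\tilde\gamma$ is given by left translation by $\tilde\gamma(s)$.

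Second, I would decompose the tangent space to the principal orbit at $\tilde\gamma(s)$ into shape-diagonal subspaces. Two natural families appear: ``sphere directions'' induced from the $G$-action (whose principal curvatures coincide with those appearing in the proof of Theorem~E at the same argument $s$) and ``group directions'' induced from the $\SO(n+1)$-action, whose curvatures are computed from the biinvariant connection formula $\nabla_X Y = \tfrac{1}{2}[X,Y]$ together with the explicit form of $\dot{\tilde\gamma}(s)$. The key observation is that the multiplicities and principal-curvature arguments of the combined families match those of a $(2g,m_0,m_1)$-action on a sphere when $s$ is interpreted as the doubled normal-geodesic parameter. Similarly, both the action field endomorphism $\tilde J_t^{r(t)}$ and $\tilde\Pi_t^{r(t)}$ split block-diagonally along these families and act by rotations whose angles are linear in $s$, so the trace in Theorem~A decomposes as a sum over the two families.

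Third, I would substitute these data into Theorem~A to obtain an ODE for $\tilde r(s)$ on $(0,\pi/g)$. The change of variables $s = 2t$, $\tilde r(s) = 2r(t)$, giving $\dot{\tilde r}(s)=\dot r(t)$ and $\ddot{\tilde r}(s)=\tfrac{1}{2}\ddot r(t)$, transforms this into an ODE for $r(t)$ on $(0,\pi/(2g))$. The resulting expression should coincide term-by-term with the $(2g,m_0,m_1,k)$-ODE of Theorem~E; in particular, the non-standard trigonometric identity used in the proof of Theorem~E applies with $g$ replaced by $2g$. The boundary conditions match because $\tilde r(0)=0$ and $\tilde r(\pi/g)=k\pi/g$ translate to $r(0)=0$ and $r(\pi/(2g))=k\pi/(2g)$.

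The main obstacle is step two: verifying that the principal curvatures contributed by the ``group directions'' combine with those from the ``sphere directions'' so as to reconstitute exactly the $(2g,m_0,m_1)$-pattern. This requires careful Lie-algebraic bookkeeping that exploits biinvariance and the specific twist in the lift; once this identification is carried out, the remaining steps reduce to substitution and to reusing the trigonometric simplification already established in the proof of Theorem~E.
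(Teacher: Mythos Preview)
Your overall strategy---apply Theorem~A to the lifted action, compute the shape operator and the endomorphism $J$, and match the resulting ODE with the $(2g,m_0,m_1)$-boundary value problem---is exactly the paper's route. But two of your intermediate claims are wrong, and the second one is precisely the place you flag as the main obstacle.

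First, parallel transport along $\tilde\gamma(s)=\exp(sX_0)$ in a biinvariant metric is \emph{not} left translation by $\tilde\gamma(s)$. It is the transvection $v\mapsto \tilde\gamma(s/2)\,v\,\tilde\gamma(s/2)$ (equivalently, left translation composed with $\Ad_{\tilde\gamma(-s/2)}$). Left-invariant vector fields are parallel only when they commute with $X_0$; in general $\nabla_{\dot{\tilde\gamma}}Y=\tfrac12[X_0,Y]\neq 0$. This matters for computing $J$ correctly.

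Second, and more seriously, the decomposition into ``sphere directions from $G$'' and ``group directions from $\SO(n+1)$'' does not diagonalize the shape operator, and the claim that the $G$-directions carry the \emph{same} principal curvatures as in Theorem~E is false. The shape-operator eigenspaces along $\tilde\gamma$ are the spaces $\widetilde\am_i^{\ast}$ where $\widetilde\am_i=\tilde\ak_i/\tilde\ah$ comes from the non-principal isotropy group $\tilde K_i=\{(A,\tilde\gamma(t_i)^{-1}A\tilde\gamma(t_i)):A\in K_i\}$; these are \emph{diagonal-type} subspaces of $\ag\oplus\aso(n+1)$, not contained in either factor. The correct picture is this: the normal Jacobi operator $R_{\dot{\tilde\gamma}}$ has exactly two eigenvalues, $0$ on $\aso(n)^{\ast}$ (a parallel, flat piece that contributes nothing to the trace in Theorem~A) and $\tfrac14$ on its orthogonal complement $V$. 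Every $\widetilde\am_i^{\ast}$ lies in $V$, is an eigenspace of $\tilde S_{\vert\tilde\gamma(s)}$ with eigenvalue $-\tfrac12\cot\tfrac{s-t_i}{2}$ (note the halved argument, not the sphere value $-\cot(s-t_i)$), and is an eigenspace of $J$ with eigenvalue $\sin\!\bigl(\tfrac{\tilde r-t_i}{2}\bigr)\big/\sin\!\bigl(\tfrac{s-t_i}{2}\bigr)$. The crucial point is that on $\SO(n+2)$ one has $\widetilde\am_{i+g}\neq\widetilde\am_i$ (whereas $\am_{i+g}=\am_i$ on the sphere), so there are $2g$ distinct eigenspaces, indexed by $i=0,\dots,2g-1$, with multiplicities $m_i$. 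Substituting $s=2t$ then turns the eigenvalues into $-\tfrac12\cot\bigl(t-i\tfrac{\pi}{2g}\bigr)$ and the sums in Theorem~A into exactly the $(2g,m_0,m_1)$-sums, to which Lemma~\ref{2parac} and the cotangent identity apply with $g$ replaced by $2g$. The doubling of $g$ thus comes from the doubling of focal points along $\tilde\gamma$ (period $2\pi$ rather than $\pi$), not from a separate ``group-direction'' contribution.
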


\medskip

In this paper we just look for linear solutions of the $(g,m_0,m_1,k)$-boundary value problems. 

\begin{lemg}
The linear function $r(t)=kt$ is a solution of the $(g,m_0,m_1,k)$-boundary value problem if and only if $k=1$, or $(g=2$ and $k=-1)$, or $(m_0=m_1$ and $k=1-g)$.
\end{lemg}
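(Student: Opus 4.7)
My plan is to substitute the linear ansatz $r(t)=kt$ into the ODE of Theorem\,E and reduce it to a trigonometric identity in $t$. With $\ddot r\equiv 0$, $\dot r\equiv k$, and $r(t)-t=(k-1)t$, the equation becomes a finite linear combination of sines. After applying the product-to-sum identity $2\sin\alpha\cos\beta=\sin(\alpha+\beta)+\sin(\alpha-\beta)$ to the two terms containing a factor $\cos gt$, the equation takes the form
\[
0=(m_0+m_1)\,A(t)+(m_0-m_1)\,B(t),
\]
where
\begin{align*}
A(t) &= k\sin 2gt-(g-1)\sin 2(k-1)t-\sin 2(k+g-1)t,\\
B(t) &= 2k\sin gt-\tfrac{g+2}{2}\sin\bigl(2(k-1)+g\bigr)t-\tfrac{g-2}{2}\sin\bigl(2(k-1)-g\bigr)t.
\end{align*}

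Sufficiency is then a matter of direct verification in each of the three listed cases. For $k=1$, the frequencies of $A$ collapse via $2(k-1)t=0$ and $2(k+g-1)t=2gt$, giving $A\equiv 0$, while the arithmetic identity $2-\tfrac{g+2}{2}+\tfrac{g-2}{2}=0$ yields $B\equiv 0$. For $g=2$ and $k=-1$, the coefficient $\tfrac{g-2}{2}$ kills the last term of $B$, and the remaining pairs of terms in both $A$ and $B$ cancel using $\sin(-ct)=-\sin(ct)$. For $m_0=m_1$ and $k=1-g$, the $B$-bracket has zero prefactor, and $A$ reduces after the same sign identification to $\bigl((1-g)+(g-1)\bigr)\sin 2gt=0$.

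For necessity I would argue by frequency analysis. Expanding $(m_0+m_1)A+(m_0-m_1)B$ as a sum of sines at the (at most six) distinct frequencies drawn from $\{2g,\,2|k-1|,\,2|k+g-1|,\,g,\,|2(k-1)+g|,\,|2(k-1)-g|\}$, the identity forces the total coefficient at each frequency to vanish. Away from a finite exceptional set of coincidence values of $k$ (solutions of elementary linear equations such as $2|k-1|=g$), the frequency sets of $A$ and $B$ are disjoint, so the vanishing of $aA+bB$ decouples into $A\equiv 0$ and $(m_0-m_1)B\equiv 0$. A systematic enumeration of the three possible pairwise coincidences among $\{2g,\,2(k-1),\,2(k+g-1)\}$ (again with $\sin(-x)=-\sin x$) yields $A\equiv 0$ exactly when $k=1$ or $k=1-g$. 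Substituting the latter into $B$ and invoking the triple-angle identity $\sin 3gt=3\sin gt-4\sin^3 gt$ gives
\[
B(t)\big|_{k=1-g}=2(2-g)\sin^3 gt,
\]
so the additional constraint $(m_0-m_1)B\equiv 0$ restricts further to $g=2$ (hence $k=-1$) or $m_0=m_1$, matching the three cases of the statement.

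The main bookkeeping hurdle will be the coincidence values of $k$ where the frequency sets of $A$ and $B$ overlap, which must be inspected to rule out accidental cancellations between the two brackets. These $k$-values are roots of simple linear equations in $k$ at fixed $g$, hence finitely many for each $g\in\{1,2,3,4,6\}$, and are easily dispatched by direct substitution: typically one of the surviving sines has no partner at the same frequency, forcing a contradictory condition on the positive quantities $m_0+m_1$ or $m_0-m_1$.
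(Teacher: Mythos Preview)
Your approach is workable but differs substantially from the paper's, and you are making your life harder than necessary by omitting a structural constraint that the paper exploits.

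The paper does not do any frequency analysis. Instead it splits into the two cases $m_0\neq m_1$ and $m_0=m_1$ and, for necessity, simply evaluates the ODE at a single well-chosen point. In the case $m_0\neq m_1$ (so $g$ is even) one plugs $r(t)=kt$ into the full ODE and evaluates at $t=\tfrac{\pi}{2g}$; after simplification this yields $k=\sin\tfrac{(2j+1)\pi}{2}=\pm 1$. In the case $m_0=m_1$ one uses the simplified ODE~(\ref{ode}), which for $r(t)=kt$ reads $k\sin 2gt=(g-1)\sin 2(k-1)t+\sin 2(k+g-1)t$; evaluating at $t_0=\tfrac{\pi}{4g}$ gives $|k|\le g$, and combined with the arithmetic constraint $k=jg+1$ this forces $j\in\{0,-1\}$, i.e.\ $k\in\{1,1-g\}$.

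The crucial point you are missing is precisely this constraint $k=jg+1$, $j\in\Z$, which is built into the definition of the $(g,m_0,m_1,k)$-boundary value problem (it comes from the admissible $k$-maps, see Section~\ref{kmaps}). With it, the pointwise evaluation immediately pins $k$ down to at most two values and no frequency bookkeeping is needed. Without it, your frequency analysis is heavier: you must enumerate all coincidences not only among the frequencies of $A$ (which you sketch) but also between those of $A$ and $B$ when $g$ is even, where both frequency sets consist of even integers and genuine overlaps occur. You acknowledge this as a ``bookkeeping hurdle'' and defer it, so your necessity argument is incomplete as written. Moreover, your assertion that ``$A\equiv 0$ exactly when $k=1$ or $k=1-g$'' is not robust across all $g$: for instance when $g=1$ the middle coefficient $g-1$ vanishes and $A(t)=k\sin 2t-\sin 2kt$ also vanishes for $k=-1$, so the enumeration needs more care than you indicate.

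In short: your sufficiency verification is fine and the frequency-decomposition idea is sound, but the paper's one-point evaluation together with $k=jg+1$ is both shorter and avoids the case analysis you leave open.
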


There is the natural question of whether some of the $(g,m_0,m_1,k)$-boundary value problems have nonlinear solutions. The cases $g=1$, $m_0 = m_1 = m$, $k=\pm 1$ was treated in detail by Bizon and Chmaj \cite{bizon} in a different language. For each $m\in\{2,3,4,5\}$ and each $k \in \{0,1\}$ they constructed an countably infinite family of equivariant harmonic self-maps of $\Sph^{m+1}$ with degree $k$ and showed that such a family could not exist for
$m \ge 6$. In \cite{sifferttwo} the second named author of the current paper constructed infinite families of nonlinear solutions of the $(2,m_0,m_1,\pm 1)$-boundary value problems. In a further subsequent paper the second named author will in particular construct nonlinear solutions of the $(3,2,2,-2)$-, $(4,2,2,5)$-, $(6,2,2,7)$-, $(g,1,1,1+g)$- and the $(g,1,1,1-2g)$-boundary value problems for $g\in \{2,3,4,6\}$. These solutions yield further new harmonic self-maps of the corresponding spheres and orthogonal groups.

\medskip

Finally, the general expression of the tangential part of the tension fields is evaluated by two different strategies. Using Schur's Lemma multiple times, we first show that the tangential part vanishes for actions for which the isotropy representations of the principal orbits $G/H$ decompose into inequivalent irreducible $H$-modules. This works for $g \le 3$ and for $(g,m) = (4,2)$ and $(g,m) = (6,2)$. The second, more general strategy is to reduce the computations by determining the fixed point sets of the principal isotropy groups and by employing the action of the Weyl group. We work this strategy out for $(g,m_0,m_1) = (4,m_0,1)$ and $(g,m) = (6,1)$.

\begin{thmh}
Given a $(g,m_0,m_1)$-action on $\Sph^{n+1}$ or $\SO(n+2)$ the tangential component of the tension field of any $(k,r)$-map vanishes except possibly for
\begin{gather*}
  (g,m_0,m_1) \in \{ (4,2,2\ell+1), (4,4,4\ell+3), (4,4,5), (4,6,9) \}.
\end{gather*}
\end{thmh}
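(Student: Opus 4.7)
The plan is to start from the closed-form expression of Theorem B and exploit the $\Ad(H)$-equivariance of all the ingredients, where $H = G_{\gamma(t)} = G_{\gamma(r(t))}$ is the common principal isotropy along the normal geodesic. The Lie bracket on $\ag$, the orthogonal projection $\ag \to \an$ (the only part that survives under the action-field map at the fixed points of $H$), and the bilinear forms $B(X,Y) := \langle X^*, Y^*\rangle_{\vert\gamma(r(t))}$ and $B'(X,Y) := \langle X^*, Y^*\rangle_{\vert\gamma(t)}$ on $\an$ are all $\Ad(H)$-invariant. Consequently the map $F \mapsto c_F := -\sum_\mu B([E_\mu, F]_{\an}, E_\mu)$ is an $\Ad(H)$-equivariant linear functional on $\an$, and the vector $\tau^{\tan}_{\vert\gamma(t)} = \sum_\nu c_{F_\nu}\, F^*_{\nu\vert\gamma(r(t))}$ must itself be $H$-invariant. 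The analysis therefore reduces to representation-theoretic questions about the isotropy $H$-module $\an$.

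The first reduction goes via Schur's lemma. I would use the tables in \cite{gwz} to verify that for the $(g, m_0, m_1)$-actions with $g \le 3$ and for $(g, m) \in \{(4,2), (6,2)\}$ the $H$-module $\an$ decomposes into pairwise inequivalent irreducible summands $V_1 \oplus \cdots \oplus V_s$. Schur then forces every $\Ad(H)$-invariant positive-definite symmetric bilinear form on $\an$---in particular $B$ and $B'$---to be block-diagonal and, on each block, a positive scalar multiple of the canonical invariant form. Choosing the orthonormal bases $(E_\mu)$ and $(F_\nu)$ simultaneously adapted to this decomposition, the double sum collapses to $c_\nu = -\sum_i \lambda_i \tr_{V_i}(\pi_{V_i} \circ \ad(F_\nu))$ for positive constants $\lambda_i$; each summand vanishes because $\ad(F_\nu)$ is $Q$-skew on $\ag$ and hence $\pi_{V_i}\circ\ad(F_\nu)$ is $Q$-skew on the $Q$-orthogonal subspace $V_i$, so traceless. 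Thus $c_\nu = 0$ for every $\nu$, and $\tau^{\tan} \equiv 0$.

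For the remaining tractable cases $(4, m_0, 1)$ and $(6, 1)$, the $H$-module $\an$ contains equivalent irreducible summands and Schur alone no longer suffices. Here I would switch to a more geometric reduction. First, compute the fixed-point subspace $\an^H$ explicitly from the principal isotropy data---in these cases it is low-dimensional---so that $\tau^{\tan}_{\vert\gamma(t)}$, being $H$-invariant at $\gamma(r(t))$, is pinned to a small subspace of the orbit tangent space. Second, invoke the Weyl group $W = D_g$ acting on the normal geodesic by reflections: each Weyl element gives a $G$-equivariant identification of orbit tangent spaces at reflected parameters, which induces a finite group of linear symmetries on $\an^H$ that the family $t \mapsto \tau^{\tan}_{\vert\gamma(t)}$ must respect. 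A case-by-case check then leaves no nonzero $W$-admissible vector in $\an^H$, forcing vanishing. The same argument transfers verbatim to the lifts on $\SO(n+2)$, because the lifted action has the same principal isotropy $H$ (up to a central factor), the induced orbit metrics remain $\Ad(H)$-invariant, and the tangential formula of Theorem B is unchanged in form.

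The main obstacle, and the reason for the exclusions $(4,2,2\ell+1)$, $(4,4,4\ell+3)$, $(4,4,5)$, $(4,6,9)$, is that in those cases both multiplicities are strictly greater than one and unequal, so equivalent summands of $\an$ are unavoidable, the fixed subspace $\an^H$ remains too large, and the Weyl group $D_4$ does not act sharply enough on $\an^H$ to force $\tau^{\tan} = 0$ by linear algebra alone. Establishing vanishing in those cases would require genuine algebraic identities in the specific orbit geometry, which lie beyond the representation-theoretic and Weyl-symmetry methods outlined here.
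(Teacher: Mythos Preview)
Your sphere-side argument is sound and close to the paper's: for $g\le 3$ and $(g,m)\in\{(4,2),(6,2)\}$ the $H$-module $\an=\am_0\oplus\cdots\oplus\am_{g-1}$ is multiplicity-free, and your trace computation via Theorem~B is a legitimate alternative to the paper's use of Theorem~\ref{tangentialone} (where $P_{r(t)}$ diagonalizes, so $[E_\mu,P_{r(t)}E_\mu]=0$). For $(4,m_0,1)$ and $(6,1)$ on the sphere the fixed subspace $\an^H$ is in fact \emph{zero}, not merely low-dimensional, so the Weyl step is unnecessary there.

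The genuine gap is the claim that ``the same argument transfers verbatim to the lifts on $\SO(n+2)$.'' It does not. For the lifted action the isotropy module $\tilde\an$ is never multiplicity-free, even when $g\le 3$: one has equivalent $\tilde H$-modules $\tilde\am_i\cong\tilde\am_{i+g}\cong\tilde\ap_i$ (the $\tilde\ap_i$ sitting inside $\{0\}\times\aso(n)$), and Schur no longer forces the orbit metrics to be block-diagonal with respect to any irreducible decomposition. The paper handles this by constructing, for each $E_{i,\mu}\in\am_i$, a three-dimensional \emph{abelian} subalgebra $\tilde\ab_{i,\mu}\subset\tilde\ag$ that is $\tilde P_t$-invariant for every regular $t$, and then decomposing $\tilde\an$ as an orthogonal sum of these abelian pieces together with eigenspaces $\tilde\ah_\pm$ and $\tilde\aq$; this is the substantive work of Section~\ref{tangentialparts}, and it relies on the special form of the $E_{i,\mu}$ established in Corollary~\ref{specialform}.

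Likewise, for $(4,m_0,1)$ and $(6,1)$ on $\SO(n+2)$ the fixed subspace $\tilde\an^{\tilde H}$ is positive-dimensional (spanned by explicit $\tilde F_\nu\in\aso(n)$), and the Weyl group does not force vanishing by symmetry alone: $W$ relates $\tau^{\tan}$ at different parameter values rather than constraining it at a fixed $t$. The paper instead computes the brackets $[\tilde\am_i,\tilde F_\nu]$ directly and shows they land in components orthogonal to $\tilde\am_i^\ast$ (using $W$ only to transport the computation from one $i$ to the others). Your outline would need these two ingredients---the abelian decomposition for the Schur cases and the explicit bracket computations for $(4,m_0,1)$, $(6,1)$---to go through on $\SO(n+2)$.
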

In the remaining cases the function $r(t) = kt$ is not a solution of the $(g,m_0,m_1,k)$-boundary value problem and hence the normal component of the tension field of the $k$-map does not vanish. This does not mean, however, that there are no harmonic $(k,r)$-maps. Since the focus of the current paper is on linear solutions, we have avoided the lengthy computation needed to answer the question whether the tangential components in the remaining cases vanish for all $(k,r)$-maps.

\smallskip

We finally note that, somewhat exceptionally, the $(6,1)$-action on $\Sph^7$ lifts to a cohomogeneity one action on the compact Lie group $\Syp(2)$. The Weyl group of the lifted action is again the dihedral group $D_6$. In this case, $k$-maps exist for all integers $k = 6j+1$ with $j\in \Z$. The tangential component of the tenison field of any $(k,r)$-map on $\Syp(2)$ vanishes since the fixed point set of the principal isotropy group just consists of one unparametrized normal geodesic. The normal component vanishes if $r$ solves the $(6,1,1,k)$-boundary value problem. Hence, a $k$-map of $\Syp(2)$ is harmonic if and only if $k=1$ or $k= -5$ (the degree of the $-5$-map is $+1$).

\smallskip

The paper is organized as follows: In Section\,\ref{kmaps} we review the construction of the equivariant self-maps of cohomogeneity one manifolds introduced by the first named author in~\cite{puttmann}. In Section\,\ref{tension} we compute the tensions fields of the $(k,r)$-maps in the general setting and prove Theorem\,A and Theorem\,B. In Section\,\ref{spheres} and Section\,\ref{groups} we evaluate the general expression for the normal component of the tension field in the special case of the $(g,m_0,m_1)$-actions on $\Sph^{n+1}$ and $\SO(n+2)$, respectively. Theorem\,C is proved in Section\,\ref{spheres}, Theorem\,C is proved in Section\,\ref{groups}.  In Section\,\ref{linearsolutions} we investigate by elementary means when the linear function $r(t) = kt$ solves the $(g,m_0,m_1,k)$-boundary value problem and prove Lemma\,G. In Section\,\ref{tangentialparts} we evaluate the tangential components of the tension fields for the $(g,m_0,m_1)$-actions on $\Sph^{n+1}$ and $\SO(n+2)$ and prove Theorem\,H.
The nonstandard trigonometric identity used in the proofs of Theorem\,E and Theorem\,F is established in Section\,\ref{trigidentity}.

\bigskip

\section{Equivariant self-maps of cohomogeneity one manifolds}
\label{kmaps}
In this initial section we briefly review the construction of the equivariant self-maps of cohomogeneity one manifolds introduced by the first named author in~\cite{puttmann}.

\smallskip

Let $M$ be as in the introduction, i.e., a compact Riemannian manifold with an isometric action $G\times M\to M$ of a compact Lie group $G$ on $M$ such that the orbit space $M/G$ is isometric to the closed interval $[0,L]$. The end points $0$ and $L$ of the interval correspond to non-principal orbits $N_0$ and $N_1$ while each interior point $t$ corresponds to a principal orbit. We fix a unit-speed normal geodesic $\gamma$, i.e., a geodesic $\gamma:\R\to M$ with $\gamma(0)\in N_0$ and $\gamma(L) \in N_1$ that passes through all orbits perpendicularly. The isotropy groups of the regular points $\gamma(t)$ with $t\not\in \Z L$ are constant. We denote this common principal isotropy group by $H$. The Weyl group $W$ is by definition the subgroup of the elements of $G$ that leave $\gamma$ invariant modulo the subgroup of elements that fix $\gamma$ pointwise. The Weyl group $W$
is a dihedral subgroup of $N(H)/H$ generated by two involutions that fix $\gamma(0)$ and $\gamma(L)$, respectively. It acts simply transitively on the regular segments of the normal geodesic. We assume that $\gamma$ is closed or, equivalently, that $W$ is finite.

\begin{theorem}[see \cite{puttmann}]
The assignment $g\cdot \gamma(t) \mapsto g\cdot \gamma(kt)$ leads to a well defined smooth self-map of $M$, the {\em $k$-map}, if $k$ is of the form $k = j\abs{W}/2+1$ where $j$ is any even integer. This is even true for any integer $j$ if the isotropy group at $\gamma(L)$ is a subgroup of the isotropy group at $\gamma\bigl((\abs{W}/2+1)L\bigr)$. The degree of the $k$-map is given by
\begin{gather*}
  \deg = \begin{cases}
  k & \text{if $\codim N_0$ and $\codim N_1$ are both odd,}\\
  +1 & \text{otherwise,}
  \end{cases}
\end{gather*}
if $j$ is even, and by
\begin{gather*}
  \deg = \begin{cases}
  k & \text{if $\codim N_0$ and $\codim N_1$ are both odd,}\\
  0 & \text{if $\codim N_0$ and $\codim N_1$ are both even, $\abs{W}\not\in 4\Z$,}\\
  -1 & \text{if $\codim N_0$ is even, $\codim N_1$ is odd, and $\abs{W}\not\in 8\Z$,}\\
  +1 & \text{otherwise,}
  \end{cases}
\end{gather*}
if $j$ is odd.
\end{theorem}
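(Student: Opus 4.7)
The proof decomposes into three parts: well-definedness, smoothness, and the degree formula.

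For well-definedness, I would lift the assignment to the closed normal geodesic $\gamma:\R\to M$, which has period $P = |W|L = 2gL$ with $g = |W|/2$, since the translation $s_0s_1\in W$ acts as $t\mapsto t-2L$ and has order $g$. The Weyl group $W\cong D_g$ acts on $\R/P\Z$ via $s_0:t\mapsto -t$ and $s_1:t\mapsto 2L-t$. If $g_1\gamma(t_1) = g_2\gamma(t_2)$, writing $g_2^{-1}g_1 = n h$ with $h\in H$ fixing $\gamma(t_1)$ and $n$ realizing some $w\in W$ with $wt_1\equiv t_2\pmod P$, the desired equality $g_1\gamma(kt_1) = g_2\gamma(kt_2)$ reduces to $w(kt_1)\equiv k\,w(t_1)\pmod P$ for all $w\in W$. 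This is automatic for $w=s_0$ but forces $2(k-1)L\equiv 0\pmod{2gL}$, i.e., $k = jg+1$, from $w=s_1$. The isotropy inclusions at the singular endpoints must also hold: at $\gamma(0)$ this is trivial, and at $\gamma(L)$ one needs $K_1\subseteq K_{\gamma(kL)}$. Since $kL \bmod P$ equals $L$ for $j$ even and $(g+1)L$ for $j$ odd, this is automatic in the first case and coincides with the stated hypothesis in the second.

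For smoothness, I would invoke the slice theorem: a tubular neighborhood of each singular orbit $N_i$ is equivariantly diffeomorphic to $G\times_{K_i}D_i$, with $D_i$ the normal slice at $\gamma(iL)$. In these coordinates $\gamma$ is a radial segment and the $k$-map is the $K_i$-equivariant extension of the linear scaling $v\mapsto kv$ on $D_i$, which is manifestly smooth. On the regular part smoothness is immediate, as the map is a composition of the smooth maps $\gamma$, scalar multiplication by $k$, and the orbit map.

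For the degree, I would count preimages with signs: $\deg = \sum_{p\in f^{-1}(y)}\sgn\det Df_p$ for a regular value $y = \gamma(t_0)$ with generic $t_0\in(0,L)$. Under the orbit-space projection $M\to[0,L]$, the $k$-map descends to the $|k|$-fold folding of $t\mapsto kt$ obtained by iterated reflection at the endpoints, so there are exactly $|k|$ preimages in $M$, one per branch $[jL/|k|,(j+1)L/|k|]$ for $j = 0,\ldots,|k|-1$. At each preimage the derivative splits along $T_pM = \R\dot\gamma(t_p)\oplus T_p(G\cdot\gamma(t_p))$: the normal component contributes the sign $(-1)^j\sgn(k)$ from the fold, while the orbit-direction component, realized by the action-field identification $X^{\ast}_{\vert\gamma(t_p)}\mapsto X^{\ast}_{\vert\gamma(kt_p)}$, accumulates an additional sign each time the $j$-th branch crosses a singular orbit, arising from comparing the chosen global orientation with the one transported by the Weyl reflection on the slice representation $D_i$. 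A direct computation shows this sign depends only on the parity of $\codim N_i$ at each crossing, so the total sign at the $j$-th preimage is $(-1)^j\sgn(k)$ times a product of factors $(-1)^{\codim N_0}$ and $(-1)^{\codim N_1}$, one per crossing on that branch. Summing over all $|k|$ branches and organizing by the parities of $\codim N_0$, $\codim N_1$ and $j$, and, for $j$ odd, by the residue of $|W|$ modulo $4$ and $8$ (which governs how many crossings of each type occur as the branches sweep through a full period), yields the stated four-case formula. The main obstacle is precisely this case-by-case orientation bookkeeping, especially in the subtler $j$-odd cases where the pattern of crossings within a single branch can be incomplete.
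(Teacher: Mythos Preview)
The paper does not supply its own proof of this theorem; it is quoted verbatim from \cite{puttmann} and only serves as background for the constructions that follow. There is therefore nothing in the present paper to compare your argument against. Your outline is in the spirit of the original reference: Weyl-group equivariance for well-definedness, the slice theorem for smoothness, and a signed preimage count for the degree.

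That said, two points in your sketch would need tightening before it could stand as a proof. First, in the smoothness step for odd~$j$, the map near $N_1$ does not send the slice at $\gamma(L)$ to itself but to the slice at $\gamma((g+1)L)$, whose isotropy group $K'$ may strictly contain $K_1$ and whose slice representation may differ; showing that the $K_1$-equivariant extension of $\dot\gamma(L)\mapsto k\,\dot\gamma((g+1)L)$ is a linear (hence smooth) map between these slices requires an argument that the two $K_1$-representations are equivalent, not just the hypothesis $K_1\subset K'$. Second, your degree computation is, as you yourself note, only a plan: the assertion that the orbit-direction sign at each branch is a product of factors $(-1)^{\codim N_i}$ per crossing is correct, but the actual summation over the $|k|$ branches and the case split by $|W|\bmod 4$ and $|W|\bmod 8$ is where the entire content of the formula lies, and you have not carried it out.
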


\begin{figure}[b]
\begin{tikzpicture}
\node at (0,0) {$\SU(3),\SO(2)$};
\draw (0,0) circle [radius=1.6cm]; 
\draw(0:1.5cm)--(0:1.7cm);
\node at (0:2.2cm) {$\SO(3)$};
\draw(90:1.5cm)--(90:1.7cm);
\node at (90:2.0cm) {$\SU(2)$};
\draw(180:1.5cm)--(180:1.7cm);
\node at (180:3.6cm) {$\bmat i & &\\ & i & \\ & & -1\emat\SO(3)\bmat -i & & \\ & -i & \\ & & 1\emat$};
\draw(270:1.5cm)--(270:1.7cm);
\node at (270:2.0cm) {$\SU(2)$};
\end{tikzpicture}
\caption{Extended group diagram for the action  $\SU(3)\times\SU(3) \to \SU(3)$, $(A,B) \mapsto ABA\tp$.}
\label{extended}
\end{figure}
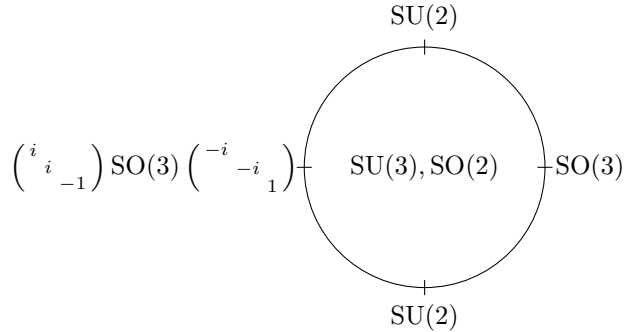

When working with this construction it is very convenient to use extended group diagrams. The usual group diagram of the action $G\times M\to M$ consists of the group $G$, the principal isotropy group $H$ along a fixed normal geodesic $\gamma$ and the two non-principal isotropy groups $K_0 = G_{\gamma(0)}$ and $K_1 = G_{\gamma(L)}$. In the {\em extended group diagram} we draw a circle for $\gamma$, denote the pair $(G,H)$ in the center of the circle and denote the non-principal isotropy groups $K_{i}$ at the positions $\gamma(iL)$ where they occur. We choose $\gamma$ to start on the right and to proceed counter-clockwise.

As an example we consider the extended group diagram of the action $\SU(3)\times\SU(3) \to \SU(3)$,
$(A,B) \mapsto ABA\tp$ with the unit-speed normal geodesic
\begin{gather*}
  \gamma(t) = \bmat \cos t & -\sin t & 0\\ \sin t & \cos t & 0\\ 0 & 0 & 1\emat
\end{gather*}
and $L=\pi/2$. The isotropy groups $K_0$ and $K_1$ are given by $\SO(3)$ and $\SU(2)$, respectively. The Weyl group $W$ is a dihedral group of order $\abs{W} = 4$ generated by the two involutions $\sigma_0 = \bmat 1 & & \\ & -1 & \\ & & -1\emat$ and $\sigma_1 = \bmat i & & \\ & -i & \\ & & 1\emat$. The isotropy groups $K_2$ and $K_3$ at $t=\pi$ and $t= 3\pi/2$ are equal to $\rho K_0\rho^{-1}\neq K_0$ and $\rho K_1\rho^{-1} = K_1$, respectively, where $\rho = \sigma_1\circ\sigma_0$, see Figure\,\ref{extended}. From this diagram we see that we get $k$-maps of $\SU(3)$ for all odd integers $k$. Note that if we use the normal geodesic $\gamma(t-\pi/2)$, i.e., if we start at the singular orbit $N_1$ instead of at $N_0$ we only get $k$-maps for $k \in 4\Z-1$. Each of these $k$-maps is identical to the $k$-map constructed starting from $N_0$. In general, it matters only for odd integers $j$ at which non-principal orbit the normal geodesics start.

\bigskip

\section{The tension field of the $(k,r)$-maps}
\label{tension}
In this section we compute the tension field $\tau$ of the $(k,r)$-maps defined in the introduction. 
The normal component is given in terms of the shape operator of the orbits while the formula for the tangential component involves more information on the group action.

\smallskip

Below we let $\psi$ be a given $(k,r)$-map, i.e., $\psi(g\cdot \gamma(t))=g\cdot \gamma(r(t))$.
In order to compute the tension field we need the normal and tangential derivatives of $\psi$. The derivative of $\psi$ in normal direction is given by
\begin{gather*}
  d\psi_{\vert \gamma(t)} \cdot \dot \gamma(t) = \frac{d}{dt}(\psi\circ\gamma(t)) = \dot r(t) \dot \gamma(r(t)).
\end{gather*}
In order to compute the derivative of $\psi$ in tangential directions we use action fields. To each element $X$ of the Lie algebra $\ag$ of $G$ there is the corresponding action field $X^{\ast}$ on $M$ given by
\begin{gather*}
  X^{\ast}_{\vert p} = \frac{d}{ds} (\exp sX \cdot p)_{\vert s = 0}.
\end{gather*}
The map $\ag/\ah \to T_p(G\cdot p)$ is a vector space isomorphism for regular points $p$. Now we get
\begin{gather*}
  d\psi_{\vert\gamma(t)}\cdot X^{\ast}_{\vert \gamma(t)} = \frac{d}{ds} \psi(\exp sX \cdot \gamma(t))_{\vert s= 0}
  = \frac{d}{ds}\bigl(\exp sX \cdot \gamma(r(t))\bigr)_{\vert s=0} = X^{\ast}_{\vert\gamma(r(t))}.
\end{gather*}

\smallskip

By its very definition the tension field $\tau$ of $\psi$ is given by
\begin{gather}
   \tau_{\vert p} = \sum_{\mu=0}^n \nabla d\psi_{\vert p}(e_{\mu},e_{\mu}) = \sum_{\mu=0}^{n}\Bigl( \nabla_{e_{\mu}} (d\psi\cdot e_{\mu}) - d\psi \cdot \nabla_{e_{\mu}}e_{\mu}\Bigr)_{\vert p}
\label{tensiondef}
\end{gather}
where the vectors $e_0,\ldots,e_n$ form any orthonormal basis of $T_p M$ and can be extended arbitrarily to vector fields on a neighborhood of $p$. Because of the equivariance of the tension field it suffices to evaluate the expression along $\gamma(t)$. We denote by $T$ the unit normal field to the principal orbits given by $T_{\vert g\cdot \gamma(t)} = g\cdot \dot \gamma(t)$. Furthermore, we set $e_0 = \dot\gamma(t)$ and choose $E_1,\ldots, E_n\in \ag$ such that $e_1 = E^{\ast}_{1\vert\gamma(t)},\ldots,e_n = E^{\ast}_{n\vert\gamma(t)}$ form an orthonormal basis of $T_{\gamma(t)}(G\cdot\gamma(t))$. Now,
\begin{gather*}
  \nabla_{e_0}(d\psi\cdot e_0)_{\vert \gamma(t)} = \frac{\nabla}{dt} \bigl(\dot r(t) \dot \gamma(r(t))\bigr) = \ddot r(t) \dot \gamma(r(t))
\end{gather*}
and $(d\psi\cdot \nabla_{e_0} e_0)_{\vert \gamma(t)} = d\psi_{\vert \gamma(t)}\cdot \frac{\nabla}{dt}\dot\gamma(t) = 0$. Hence, the tension field definition (\ref{tensiondef}) becomes
\begin{gather}
   \tau_{\vert \gamma(t)} = \ddot r(t) \dot \gamma(r(t)) + \sum_{\mu=1}^{n}\Bigl( \nabla_{E^{\ast}_{\mu}} E^{\ast}_{\mu\vert\gamma(r(t))}  - d\psi_{\vert \gamma(t)} \cdot \nabla_{E^{\ast}_{\mu}}E^{\ast}_{\mu\vert \gamma(t)}\Bigr).
\label{tensionev}
\end{gather}

The following theorem gives the normal component of the tension field in a way that is suitable for actions on spaces where one has some information on the Jacobi fields (e.g. symmetric spaces). Recall that
\begin{gather*}
  \Pi_t^{r(t)}: T_{\gamma(t)} (G\cdot\gamma(t)) \to T_{\gamma(r(t))} (G\cdot \gamma(r(t))
\end{gather*}
denotes the parallel transport along the normal geodesic~$\gamma$ and that $J_t^{r(t)}$ denotes the endomorphism of
$T_{\gamma(t)} (G\cdot\gamma(t))$ given by composing the action field homomorphism $X^{\ast}_{\vert\gamma(t)} \mapsto X^{\ast}_{\vert \gamma(r(t))}$ with $(\Pi_t^{r(t)})^{-1} = \Pi_{r(t)}^t$. We are now able to prove Theorem\,A from the introduction.

\begin{theorem}
The normal component $\tau^{\nor}_{\vert\gamma(t)} = \langle\tau_{\vert\gamma(t)}, \dot \gamma (r(t))\rangle$ of the tension field is given by
\begin{gather*}
  \tau^{\nor}_{\vert\gamma(t)} = \ddot r(t) - \dot r(t) \tr S_{\vert \gamma(t)}
    + \tr\, (J_t^{r(t)})^{\ast} (\Pi_t^{r(t)})^{-1} S_{\vert \gamma(r(t))} \Pi_t^{r(t)} J_t^{r(t)}
\end{gather*}
where $S_{\vert \gamma(t)}$ denotes the shape operator of the orbit $G\cdot \gamma(t)$ at $\gamma(t)$ and $(J_t^{r(t)})^{\ast}$ denotes the adjoint endomorphism of $J_t^{r(t)}$.
\label{normalone}
\end{theorem}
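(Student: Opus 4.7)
The plan is to take the inner product of expression (\ref{tensionev}) with the unit normal $\dot\gamma(r(t))$ at the image point $\gamma(r(t))$, and to compute each of the three resulting groups of terms separately. The first term immediately yields $\ddot r(t)$ since $\dot\gamma(r(t))$ is a unit vector. What remains is to extract the normal components of $\nabla_{E_\mu^\ast}E_\mu^\ast$ at $\gamma(r(t))$ and at $\gamma(t)$, with proper care for the fact that the action-field homomorphism is in general not an isometry.

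For each summand at $\gamma(r(t))$: since the action field $E_\mu^\ast$ is tangent to every orbit, the projection of $\nabla_{E_\mu^\ast}E_\mu^\ast$ onto $\dot\gamma(r(t))$ is, by definition of the shape operator, equal to $\langle S_{\vert\gamma(r(t))}\,E_{\mu\vert\gamma(r(t))}^\ast,\,E_{\mu\vert\gamma(r(t))}^\ast\rangle$. The defining relation of $J_t^{r(t)}$ reads $E_{\mu\vert\gamma(r(t))}^\ast = \Pi_t^{r(t)}\,J_t^{r(t)}(E_{\mu\vert\gamma(t)}^\ast)$. Since $\Pi_t^{r(t)}$ is an isometry, its inverse equals its adjoint, so this quadratic form rewrites as
\begin{gather*}
\langle (J_t^{r(t)})^\ast\, (\Pi_t^{r(t)})^{-1}\, S_{\vert\gamma(r(t))}\, \Pi_t^{r(t)}\, J_t^{r(t)}\; E_{\mu\vert\gamma(t)}^\ast,\; E_{\mu\vert\gamma(t)}^\ast\rangle.
\end{gather*}
Summing over $\mu$ and using the orthonormality of $\{E_{\mu\vert\gamma(t)}^\ast\}$ converts the sum into the trace of the displayed endomorphism, producing the third term of the claimed formula.

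For each summand at $\gamma(t)$: I would first transfer $d\psi$ to the other side of the inner product via its adjoint. The orthogonal splitting of $T_{\gamma(t)}M$ into the normal line and the orbit tangent space makes the adjoint explicit: $d\psi_{\vert\gamma(t)}$ sends $\dot\gamma(t)$ to $\dot r(t)\dot\gamma(r(t))$ and maps $T_{\gamma(t)}(G\cdot\gamma(t))$ into $T_{\gamma(r(t))}(G\cdot\gamma(r(t)))$, which is perpendicular to $\dot\gamma(r(t))$. Therefore $(d\psi_{\vert\gamma(t)})^\ast\,\dot\gamma(r(t)) = \dot r(t)\,\dot\gamma(t)$, and each summand reduces to $\dot r(t)\,\langle S_{\vert\gamma(t)}\,E_{\mu\vert\gamma(t)}^\ast,\,E_{\mu\vert\gamma(t)}^\ast\rangle$. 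Summing gives $\dot r(t)\tr S_{\vert\gamma(t)}$, and the minus sign in (\ref{tensionev}) produces the middle term of the theorem.

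The main technical point is the anisotropy of the action-field homomorphism at $\gamma(r(t))$: the frame $\{E_{\mu\vert\gamma(r(t))}^\ast\}$ is in general not orthonormal, so rewriting $\sum_\mu \langle S_{\vert\gamma(r(t))}\,\cdot,\,\cdot\rangle$ as a trace cannot be done naively. The role of $J_t^{r(t)}$ together with the isometric parallel transport $\Pi_t^{r(t)}$ is precisely to isolate the non-isometric part of the homomorphism and to reduce the summation to a trace computed in the orthonormal frame at $\gamma(t)$; once this bookkeeping is in place, the remaining manipulations are routine.
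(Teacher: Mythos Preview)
Your proposal is correct and follows essentially the same route as the paper: take the inner product of (\ref{tensionev}) with $\dot\gamma(r(t))$, use the defining relation $\langle \nabla_{X^\ast}Y^\ast,T\rangle = \langle X^\ast,S\,Y^\ast\rangle$ for the shape operator at both $\gamma(t)$ and $\gamma(r(t))$, and rewrite the non-orthonormal sum at $\gamma(r(t))$ as a trace via $E^\ast_{\mu\vert\gamma(r(t))} = \Pi_t^{r(t)}J_t^{r(t)}e_\mu$ together with the fact that $\Pi_t^{r(t)}$ is an isometry. The only cosmetic difference is that for the $d\psi\cdot\nabla_{E^\ast_\mu}E^\ast_\mu$ term you pass to the adjoint $(d\psi)^\ast\dot\gamma(r(t))=\dot r(t)\dot\gamma(t)$, whereas the paper simply observes that $d\psi$ carries the normal component at $\gamma(t)$ to $\dot r(t)$ times the normal at $\gamma(r(t))$; these are two phrasings of the same computation.
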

\begin{proof}
We evaluate the tension field formula (\ref{tensionev}). First notice that
\begin{gather*}
  \langle \nabla_{X^{\ast}} Y^{\ast}, T \rangle =
    \langle X^{\ast}, S \cdot Y^{\ast}\rangle
\end{gather*}
by the definition of the shape operator. Hence,
\begin{gather*}
  \langle \sum_{\mu=1}^n \nabla_{E^{\ast}_{\mu}} E^{\ast}_{\mu}, T \rangle_{\vert\gamma(t)} =
  \sum_{\mu=1}^n \langle e_{\mu}, S_{\vert\gamma(t)} e_{\mu}\rangle = \tr S_{\vert\gamma(t)}.
\end{gather*}
Similarly, we get
\begin{multline*}
  \langle \sum_{\mu=1}^n \nabla_{E^{\ast}_{\mu}} E^{\ast}_{\mu}, T \rangle_{\vert\gamma(r(t))} =
  \sum_{\mu=1}^n \langle E^{\ast}_{\mu\vert\gamma(r(t))}, S_{\vert \gamma(r(t))} E^{\ast}_{\mu\vert\gamma(r(t))} \rangle\\
  = \sum_{i=1}^n \langle J_t^{r(t)} e_{\mu}, (\Pi_t^{r(t)})^{-1} S_{\vert\gamma(r(t))} \Pi_t^{r(t)} J_t^{r(t)} e_{\mu}\rangle.
\qedhere
\end{multline*}
\end{proof}

\begin{remark}
For actions on concretely given spaces, the shape operator can be computed by
$S\cdot X^{\ast}_{\vert\gamma(t)} = -\nabla_{\dot\gamma(t)} X^{\ast}$. This standard fact follows from applying that $\nabla$ is torsion free to the second derivatives of the map $(s,t) \mapsto \exp sX \cdot \gamma(t)$.
\end{remark}

\begin{remark}
The $(k,r)$-maps can be defined analogously for singular codimension one metric foliations with closed normal geodesics. Theorem\,\ref{normalone} is still valid in this situation. One only has to substitute the action fields by variation vector fields of variations by normal geodesics.
\end{remark}

We now give a second formula in terms of data on the acting group $G$. We adopt the notation from \cite{puettmann} and \cite{gz}.

\smallskip

Let $Q$ be a fixed biinvariant metric on $G$. Denote the orthonormal complement of the Lie algebra $\ah$ of the principal isotropy group $H$ in $\ag$ by $\mathfrak{n}$. Define the metric endomorphisms $P_t : \mathfrak{n} \to \mathfrak{n}$ by
\begin{gather*}
  Q(X, P_t\cdot Y) = \langle X^{\ast},Y^{\ast} \rangle_{\vert\gamma(t)}.
\end{gather*}
Note that each $P_t$ is symmetric with respect to $Q$ and $\Ad_H$ equivariant. We have
\begin{multline*}
  \langle X^{\ast},S\cdot X^{\ast}\rangle_{\vert\gamma(r(t))}
  = -\langle X^{\ast}, \nabla_T X^{\ast}\rangle_{\vert\gamma(r(t))}
  = -\tfrac{1}{2\dot r(t)} \tfrac{d}{dt} \langle X^{\ast},X^{\ast}\rangle_{\vert\gamma(r(t))} \\
  = -\tfrac{1}{2} Q(X,(\dot P)_{r(t)} X)
  = -\tfrac{1}{2} \langle X^{\ast}, (P_t^{-1}(\dot P)_{r(t)} X)^{\ast} \rangle_{\vert\gamma(t)}
\end{multline*}
and hence the following statement holds.

\begin{theorem}
The normal component of the tension field is given by
\begin{gather*}
  \tau^{\nor}_{\vert\gamma(t)} = \ddot r(t) + \tfrac{1}{2}\dot r(t) \tr P_t^{-1}\dot P_t
    - \tfrac{1}{2} \tr P_t^{-1} (\dot P)_{r(t)}.
\end{gather*}
\label{normaltwo}
\end{theorem}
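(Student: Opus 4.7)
The plan is to combine Theorem\,A with the identity $\langle X^{\ast}, S\cdot X^{\ast}\rangle_{\vert\gamma(r(t))} = -\tfrac{1}{2}\langle X^{\ast}, (P_t^{-1}(\dot P)_{r(t)} X)^{\ast}\rangle_{\vert\gamma(t)}$ already derived in the lines immediately preceding the statement. Since the $\ddot r(t)$ summand of Theorem\,A carries over verbatim, the task reduces to expressing $\tr S_{\vert\gamma(t)}$ and $\tr\,(J_t^{r(t)})^{\ast}(\Pi_t^{r(t)})^{-1} S_{\vert\gamma(r(t))} \Pi_t^{r(t)} J_t^{r(t)}$ in terms of $P_t^{-1}\dot P_t$ and $P_t^{-1}(\dot P)_{r(t)}$ respectively.

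The key bookkeeping device I would set up first is the following: if $E_1,\ldots,E_n \in \mathfrak{n}$ are chosen so that $E^{\ast}_{1\vert\gamma(t)},\ldots,E^{\ast}_{n\vert\gamma(t)}$ form an orthonormal basis of $T_{\gamma(t)}(G\cdot\gamma(t))$, then by the defining relation of $P_t$ one has $Q(E_\mu, P_t E_\nu) = \delta_{\mu\nu}$. In particular, the $E_\mu$ are orthonormal on $\mathfrak{n}$ with respect to the inner product $Q(\cdot, P_t\cdot)$, so since the trace of an endomorphism is independent of the inner product used to compute it, for any endomorphism $A$ of $\mathfrak{n}$ the identity $\tr A = \sum_\mu Q(E_\mu, P_t A E_\mu)$ holds.

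For $\tr S_{\vert\gamma(t)}$ I would apply the pre-statement identity with $r(t)$ replaced by $t$ to get $\langle E^{\ast}_\mu, S\,E^{\ast}_\mu\rangle_{\vert\gamma(t)} = -\tfrac{1}{2} Q(E_\mu, \dot P_t E_\mu)$; summing over $\mu$ and inserting $P_t P_t^{-1}$ in front of $\dot P_t$ yields $\tr S_{\vert\gamma(t)} = -\tfrac{1}{2}\tr(P_t^{-1}\dot P_t)$. For the remaining trace the main obstacle is to verify that the $J^{\ast}(\Pi^{-1})\,\cdot\,\Pi J$ sandwich absorbs the non-orthonormality of the vectors $E^{\ast}_{\mu\vert\gamma(r(t))}$: using that $\Pi_t^{r(t)}$ is an isometry and that $\Pi_t^{r(t)} J_t^{r(t)}(E^{\ast}_{\mu\vert\gamma(t)}) = E^{\ast}_{\mu\vert\gamma(r(t))}$ by the definition of $J_t^{r(t)}$, I would collapse the trace into $\sum_\mu \langle E^{\ast}_\mu, S\,E^{\ast}_\mu\rangle_{\vert\gamma(r(t))}$, then apply the full pre-statement identity together with the basis trick a second time to rewrite this sum as $-\tfrac{1}{2}\tr(P_t^{-1}(\dot P)_{r(t)})$. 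Assembling the three contributions with the signs dictated by Theorem\,A produces exactly the claimed formula.
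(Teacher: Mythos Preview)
Your proposal is correct and follows essentially the same route as the paper. The paper's own argument is compressed into the phrase ``and hence the following statement holds'' after the displayed identity; what it leaves implicit is exactly your trace bookkeeping, namely that the final form $-\tfrac{1}{2}\langle X^{\ast},(P_t^{-1}(\dot P)_{r(t)}X)^{\ast}\rangle_{\vert\gamma(t)}$, summed over the orthonormal basis $E^{\ast}_{\mu\vert\gamma(t)}$, yields $-\tfrac{1}{2}\tr P_t^{-1}(\dot P)_{r(t)}$ --- which is equivalent to your device of computing traces with respect to the inner product $Q(\cdot,P_t\cdot)$.
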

Note that the objects of the formula in Theorem\,\ref{normalone} depend only on the orbit geometry of the action. Usually, however, orbit equivalent actions will yield different endomorphisms fields $P_t$.
For the $(g,m)$-actions with $g \ge 3$ on $\SO(mg+2)$, for example, the metric endomorphisms
$P_t$ do not diagonalize simultaneously while the shape operators $S_{\vert\gamma(t)}$ do diagonalize simultaneously in a suitable parallel orthonormal basis along~$\gamma$. For these actions, the formula of Theorem\,\ref{normalone} is much easier to evaluate.

\medskip

We now turn to the tangential component of the tension field. First of all, we have the following standard fact.
\begin{lemma}
\label{leminvariant}
The tangential component $\tau^{\tan}_{\vert\gamma(t)}$ of the tension field is contained in the common fixed point set $\bigl(T_{\gamma(t)}(G\cdot \gamma(t))\bigr)^{H}$ of the principal isotropy group $H$ on $T_{\gamma(t)}(G\cdot\gamma(t))$.
\label{tangential}
\end{lemma}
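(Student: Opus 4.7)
The plan is to derive the $H$-invariance from the $G$-equivariance of $\psi$ together with the $G$-invariance of the Riemannian metric on $M$. The general fact I would invoke, and quickly verify if needed, is that for any smooth $G$-equivariant map $\psi: M \to M$ between Riemannian $G$-manifolds with $G$-invariant metrics, the tension field is itself $G$-equivariant in the sense that
\[
\tau_{\vert g\cdot p} = dg_{\vert \psi(p)} \cdot \tau_{\vert p}
\]
for every $g \in G$ and $p \in M$. This follows directly from the defining formula (\ref{tensiondef}) by transporting an orthonormal frame at $p$ to one at $g \cdot p$ via $dg$, using $\psi \circ g = g \circ \psi$ and the fact that $dg$ intertwines the Levi-Civita connection with itself.

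Next, I specialize this equivariance to elements $h \in H$ and to the basepoint $p = \gamma(t)$. Since $H$ is by definition the common isotropy group $G_{\gamma(s)}$ at all regular $s$, every $h \in H$ fixes $\gamma(s)$ for all regular $s$, hence for all $s$ by continuity; in particular $h \cdot \gamma(t) = \gamma(t)$ and $h \cdot \gamma(r(t)) = \gamma(r(t))$. The equivariance relation therefore reduces to
\[
\tau_{\vert \gamma(t)} = dh_{\vert \gamma(r(t))} \cdot \tau_{\vert \gamma(t)},
\]
so that $\tau_{\vert \gamma(t)}$ is a fixed vector of $H$ acting on $T_{\gamma(r(t))} M$. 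The orthogonal decomposition $T_{\gamma(r(t))} M = \R\,\dot\gamma(r(t)) \oplus T_{\gamma(r(t))}(G\cdot \gamma(r(t)))$ is $H$-invariant: $dh$ fixes $\dot\gamma(r(t))$ because $h$ fixes the whole parametrized curve $\gamma$, and $dh$ preserves the orbit tangent space because $h$ permutes orbits. Consequently the normal and tangential components of $\tau_{\vert\gamma(t)}$ each separately lie in the corresponding $H$-fixed subspace, which is the claim of the lemma.

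There is no serious obstacle here; the only subtlety is notational. Strictly speaking, $\tau_{\vert\gamma(t)}$ is an element of $T_{\gamma(r(t))} M$, not of $T_{\gamma(t)} M$, so the lemma as written must be read via the canonical $H$-equivariant identification of $T_{\gamma(t)}(G\cdot\gamma(t))$ with $T_{\gamma(r(t))}(G\cdot\gamma(r(t)))$ given by the action-field correspondence $X^{\ast}_{\vert\gamma(t)} \mapsto X^{\ast}_{\vert\gamma(r(t))}$ that is used throughout the paper; under this identification both fixed-point sets correspond to the same subspace of $\ag/\ah$, and the statement becomes unambiguous.
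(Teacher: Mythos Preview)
Your proof is correct and follows exactly the same approach as the paper: the paper's proof is the single line ``This follows from the equivariance of the tension field $h\cdot \tau_{\vert p} = \tau_{\vert hp} = \tau_{\vert p}$,'' which is precisely the argument you spell out in detail. Your additional remark about the location of $\tau_{\vert\gamma(t)}$ in $T_{\gamma(r(t))}M$ versus $T_{\gamma(t)}M$ is a useful clarification of a notational point the paper leaves implicit.
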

\begin{proof}
This follows from the equivariance of the tension field $h\cdot \tau_{\vert p} = \tau_{\vert hp} = \tau_{\vert p}$. 
\end{proof}

We recall from formula (\ref{tensionev}) that the tangential component of the tension field is given by
\begin{gather*}
   \tau^{\tan}_{\vert \gamma(t)} =  \Bigl( \sum_{\mu=1}^{n} \nabla_{E^{\ast}_{\mu}} E^{\ast}_{\mu\vert\gamma(r(t))}\Bigr)^{\tan}
     - d\psi_{\vert \gamma(t)} \cdot \Bigl( \sum_{\mu=1}^{n} \nabla_{E^{\ast}_{\mu}}E^{\ast}_{\mu\vert \gamma(t)}\Bigr)^{\tan}.
\end{gather*}
Note that $\sum_{\mu=1}^n \nabla_{E^{\ast}_{\mu}} E^{\ast}_{\mu}$ and hence each summand in the formula above is independent of the choice of the orthonormal basis $e_1 = E^{\ast}_{1\vert\gamma(t)},\ldots,e_n = E^{\ast}_{n\vert\gamma(t)}$ of $T_{\gamma(t)}(G\cdot\gamma(t))$. Indeed, if $F_1,\ldots,F_n$ are other elements of $\mathfrak{n}$ such that $F^{\ast}_{1\vert\gamma(t)}, \ldots F^{\ast}_{n\vert\gamma(t)}$ is an orthonormal basis of $T_{\gamma(t)}(G\cdot\gamma(t))$ then $F_{\mu} = \sum_{\nu = 1}^n a_{\nu\mu} E_{\nu}$ where $(a_{\nu\mu})$ is an orthonormal matrix. Hence,
\begin{gather*}
  \sum_{\mu = 1}^n \nabla_{F^{\ast}_{\mu}} F^{\ast}_{\mu} = \sum_{\nu,\sigma = 1}^n \sum_{\mu = 1}^n a_{\nu\mu}a_{\sigma\mu} \nabla_{E^{\ast}_{\nu}} E^{\ast}_{\sigma} = \sum_{\nu = 1}^n \nabla_{E^{\ast}_{\nu}} E^{\ast}_{\nu}.
\end{gather*}

In order to actually compute the tangential component, we need the connection along the orbits. Using the skew-symmetry of $\nabla Z^{\ast}$ we get
\begin{multline}
  \langle \nabla_{X^{\ast}} X^{\ast}, Y^{\ast} \rangle = - \langle \nabla_{Y^{\ast}} X^{\ast},X^{\ast}\rangle
  = - \langle \nabla_{Y^{\ast}} X^{\ast},X^{\ast}\rangle + \langle \nabla_{X^{\ast}} Y^{\ast},X^{\ast}\rangle\\
  = \langle [X^{\ast},Y^{\ast}], X^{\ast}\rangle = -\langle [X,Y]^{\ast},X^{\ast}\rangle.
\label{direct}
\end{multline}
Hence,
\begin{multline}
  \langle \nabla_{X^{\ast}} X^{\ast}, Y^{\ast} \rangle_{\vert\gamma(t)} = -Q([X,Y], P_tX) = Q(Y,[X,P_t X]) \\
  = \langle (P_t^{-1}[X,P_t X])^{\ast}, Y^{\ast}\rangle_{\vert\gamma(t)}
\label{transform}
\end{multline}
by the skew-smmetry of $\ad_{X}$ and therefore
\begin{gather*}
  (\nabla_{X^{\ast}} X^{\ast}_{\vert\gamma(t)})^{\tan} = (P_t^{-1}[X,P_t X])^{\ast}_{\vert\gamma(t)}.
\end{gather*}
Note that $[X,P_t X]$ is contained in $\mathfrak{n}$ for all $X \in \mathfrak{n}$. Indeed, $Q(\,.\,,P_t\,.\,)$ is $\Ad_H$-invariant and hence
$\ad_Y$ is skew-symmetric with respect to this inner product for every $Y\in \ah$. The claim follows from
\begin{gather*}
  Q([X,P_t X],Y) = - Q(P_tX,[X,Y]) = Q(\ad_Y X, P_t X) = 0,
\end{gather*}
see \cite{gz}.

\begin{theorem}
\label{tanaltpart}
The tangential component of the tension field is given by
\begin{gather*}
  \tau^{\tan}_{\vert\gamma(t)} = 
  \bigl(P_{r(t)}^{-1} \sum_{\mu=1}^n [E_{\mu},P_{r(t)}E_{\mu}]\bigr)^{\ast}_{\vert\gamma(r(t))}
\end{gather*}
where $E_1,\ldots,E_n \in \mathfrak{n}$ are such that $E^{\ast}_{1\vert\gamma(t)},\ldots, E^{\ast}_{n\vert\gamma(t)}$ form an orthonormal basis of $T_{\gamma(t)}(G\cdot\gamma(t))$.
\label{tangentialone}
\end{theorem}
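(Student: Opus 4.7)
The plan is to apply to each of the two summands in
\begin{gather*}
   \tau^{\tan}_{\vert \gamma(t)} =  \Bigl( \sum_{\mu=1}^{n} \nabla_{E^{\ast}_{\mu}} E^{\ast}_{\mu\vert\gamma(r(t))}\Bigr)^{\tan}
     - d\psi_{\vert \gamma(t)} \cdot \Bigl( \sum_{\mu=1}^{n} \nabla_{E^{\ast}_{\mu}}E^{\ast}_{\mu\vert \gamma(t)}\Bigr)^{\tan}
\end{gather*}
the pointwise identity $(\nabla_{X^{\ast}} X^{\ast})^{\tan}_{\vert\gamma(s)} = (P_s^{-1}[X,P_sX])^{\ast}_{\vert\gamma(s)}$ that was established just above the statement. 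Evaluated at $s=r(t)$ and summed over $\mu$, this produces the desired expression $\bigl(P_{r(t)}^{-1}\sum_\mu [E_\mu,P_{r(t)}E_\mu]\bigr)^{\ast}_{\vert\gamma(r(t))}$. Applied at $s=t$ and then pushed forward by $d\psi_{\vert\gamma(t)}\colon X^{\ast}_{\vert\gamma(t)} \mapsto X^{\ast}_{\vert\gamma(r(t))}$, it yields the second contribution $\bigl(P_t^{-1}\sum_\mu [E_\mu,P_tE_\mu]\bigr)^{\ast}_{\vert\gamma(r(t))}$. The theorem thus reduces to showing that this second contribution vanishes.

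To prove the vanishing, I would exploit the orthonormality hypothesis at $\gamma(t)$. By the defining property of $P_t$, the condition that $E^{\ast}_{1\vert\gamma(t)},\ldots,E^{\ast}_{n\vert\gamma(t)}$ is orthonormal is exactly $Q(E_\mu,P_tE_\nu)=\delta_{\mu\nu}$, meaning that $(E_\mu)$ and $(P_tE_\mu)$ are $Q$-dual bases of $\mathfrak{n}$. Choose any $Q$-orthonormal basis $f_1,\ldots,f_n$ of $\mathfrak{n}$ and expand $E_\mu=\sum_\nu a_{\mu\nu}f_\nu$, $P_tE_\mu=\sum_\nu b_{\mu\nu}f_\nu$; then the duality relation reads $\sum_\rho a_{\mu\rho}b_{\nu\rho}=\delta_{\mu\nu}$, i.e.\ $AB\tp=I$ for the coefficient matrices. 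Transposing gives $A\tp B=I$ as well, so the contracted coefficient $\sum_\mu a_{\mu\nu}b_{\mu\sigma}$ equals $\delta_{\nu\sigma}$. Consequently
\begin{gather*}
  \sum_{\mu=1}^n [E_\mu,P_tE_\mu]
  = \sum_{\nu,\sigma=1}^n \Bigl(\sum_{\mu=1}^n a_{\mu\nu}b_{\mu\sigma}\Bigr)[f_\nu,f_\sigma]
  = \sum_{\nu=1}^n [f_\nu,f_\nu] = 0
\end{gather*}
by antisymmetry of the Lie bracket.

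With the second summand eliminated, the first survives unchanged and reproduces the formula stated in the theorem. Conceptually the cancellation can be phrased as follows: $\sum_\mu E_\mu\otimes P_tE_\mu$ is the canonical symmetric element of $\mathfrak{n}\otimes\mathfrak{n}$ dual to the inner product $Q(\,\cdot\,,P_t\,\cdot\,)$ that makes the $E^{\ast}_\mu$ orthonormal, so its contraction against the antisymmetric bracket is forced to vanish. The main obstacle is therefore nothing deep: it is only the observation that the orthonormality hypothesis forces the coefficient tensor to be symmetric, so that no further analytic or geometric input beyond the Koszul-type identity for $(\nabla_{X^{\ast}}X^{\ast})^{\tan}$ is required.
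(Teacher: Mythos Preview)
Your argument is correct and reaches the same two-term expression as the paper, but the vanishing of the second summand is handled differently. The paper first invokes the basis-independence of each summand (established just before the theorem) and then replaces the $E_\mu$ by rescaled $Q$-eigenvectors of $P_t$; since each $E_\mu$ is then an eigenvector, every bracket $[E_\mu,P_tE_\mu]$ vanishes individually. Your route avoids the change of basis entirely: you observe that the orthonormality condition makes $(E_\mu)$ and $(P_tE_\mu)$ $Q$-dual, so $\sum_\mu E_\mu\otimes P_tE_\mu$ is the symmetric canonical tensor $\sum_\nu f_\nu\otimes f_\nu$, and its contraction with the skew bracket vanishes. Your approach is arguably cleaner in that it works directly for the given basis without appealing to basis-independence or to the spectral theorem for $P_t$; the paper's approach has the minor advantage of making each term vanish separately rather than only the sum.
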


\begin{proof}
By the previous discussion we have
\begin{gather*}
  \tau^{\tan}_{\vert\gamma(t)} = \bigl(P_{r(t)}^{-1} \sum_{\mu=1}^n [E_{\mu},P_{r(t)}E_{\mu}]\bigr)^{\ast}_{\vert\gamma(r(t))}
  -  \bigl(P_{t}^{-1} \sum_{\mu=1}^n [E_{\mu},P_{t}E_{\mu}]\bigr)^{\ast}_{\vert\gamma(r(t))}.
\end{gather*}
Each of the two summands does not depend on the choice of the orthonormal basis. Since $P_t$ is symmetric with respect to $Q$ we can find a $Q$-orthonormal basis $F_1,\ldots F_n$ of eigenvectors of $P_t$ for just this single time $t$. The $F^{\ast}_{\mu\vert\gamma(t)}$ then form an orthogonal basis of $T_{\vert\gamma(t)}(G\cdot p)$. After rescaling, we get orthogonal eigenvectors of $E_{\mu} \in \mathfrak{n}$ of $P_t$, such that
$E^{\ast}_{1\vert\gamma(t)},\ldots, E^{\ast}_{n\vert\gamma(t)}$ form an orthonormal basis of $T_{\gamma(t)}(G\cdot\gamma(t))$. Hence, the second summand vanishes.
\end{proof}

The formula of Theorem\,\ref{tangentialone} is appropriate when the somewhat artifical use of the biinvariant metric $Q$ on the acting group and the endomorphism $P_{r(t)}$ might be justified by representation theoretic reasons. We finally provide an alternative formula for the tangential component of the tension field which avoids the objects $Q$ and $P_{r(t)}$ and only uses the Lie bracket from the acting group. For this formula we do not use formula (\ref{transform}) to evaluate the first term of equation (\ref{tensionev}), but rather stay with formula (\ref{direct}). In Section\,\ref{tangentialparts} we will see situations where Theorem\,\ref{tanaltpart} is more suitable and other situations where Theorem\,\ref{tanpart} is more suitable.
\begin{theorem}
\label{tanpart}
The tangential component of the tension field is given by
\begin{gather*}
  \tau^{\tan}_{\vert\gamma(t)} = -\sum_{\mu,\nu=1}^n \langle [E_{\mu},F_{\nu}]^{\ast},E_{\mu}^{\ast} \rangle_{\vert\gamma(r(t))} F^{\ast}_{\nu\vert\gamma(r(t))}.
\end{gather*}
where $E_1,\ldots,E_n \in \mathfrak{n}$ and $F_1,\ldots,F_n \in \mathfrak{n}$ are such that $E^{\ast}_{1\vert\gamma(t)},\ldots, E^{\ast}_{n\vert\gamma(t)}$ form an orthonormal basis of $T_{\gamma(t)}(G\cdot\gamma(t))$ and $F^{\ast}_{1\vert\gamma(r(t))},\ldots, F^{\ast}_{n\vert\gamma(r(t))}$ form an orthonormal basis of $T_{\gamma(r(t))}(G\cdot\gamma(r(t)))$.
\label{tangentialtwo}
\end{theorem}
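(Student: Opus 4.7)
The plan is to evaluate the tangential part of expression~(\ref{tensionev}) by keeping the form (\ref{direct}) rather than passing through the biinvariant metric $Q$ and the endomorphism $P_{r(t)}$ as was done in the proof of Theorem~\ref{tangentialone}. Projecting (\ref{tensionev}) onto the tangent space of the orbit splits $\tau^{\tan}_{\vert\gamma(t)}$ into
\begin{gather*}
\tau^{\tan}_{\vert\gamma(t)} =
\Bigl(\sum_{\mu=1}^{n} \nabla_{E^{\ast}_{\mu}} E^{\ast}_{\mu}\Bigr)^{\tan}_{\vert\gamma(r(t))}
- d\psi_{\vert\gamma(t)}\cdot \Bigl(\sum_{\mu=1}^{n} \nabla_{E^{\ast}_{\mu}} E^{\ast}_{\mu}\Bigr)^{\tan}_{\vert\gamma(t)},
\end{gather*}
and I would handle the two summands separately.

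For the second summand I would invoke the same eigenbasis trick used in the proof of Theorem~\ref{tangentialone}: since $\sum_{\mu} \nabla_{E^{\ast}_{\mu}} E^{\ast}_{\mu}$ is independent of the particular orthonormal choice of $E^{\ast}_{\mu\vert\gamma(t)}$, one may evaluate it using a $Q$-rescaled eigenbasis of $P_t$, and for that choice $[E_{\mu},P_{t}E_{\mu}]=0$ for each~$\mu$. Hence the tangential component at~$\gamma(t)$ vanishes identically, so the second summand contributes nothing to $\tau^{\tan}_{\vert\gamma(t)}$, regardless of the orthonormal basis used in the general formula.

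For the first summand I would expand the tangential component at~$\gamma(r(t))$ in the orthonormal frame $F^{\ast}_{\nu\vert\gamma(r(t))}$,
\begin{gather*}
\Bigl(\sum_{\mu} \nabla_{E^{\ast}_{\mu}} E^{\ast}_{\mu}\Bigr)^{\tan}_{\vert\gamma(r(t))}
= \sum_{\nu=1}^{n} \Bigl\langle \sum_{\mu} \nabla_{E^{\ast}_{\mu}} E^{\ast}_{\mu},\, F^{\ast}_{\nu}\Bigr\rangle_{\vert\gamma(r(t))}\, F^{\ast}_{\nu\vert\gamma(r(t))},
\end{gather*}
and then apply formula~(\ref{direct}) termwise with $X=E_{\mu}$ and $Y=F_{\nu}$ to get $\langle \nabla_{E^{\ast}_{\mu}} E^{\ast}_{\mu},F^{\ast}_{\nu}\rangle_{\vert\gamma(r(t))} = -\langle [E_{\mu},F_{\nu}]^{\ast},E^{\ast}_{\mu}\rangle_{\vert\gamma(r(t))}$. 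Double-summing over $\mu$ and~$\nu$ then produces exactly the claimed expression.

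The only real subtlety is keeping track of the point at which each quantity is evaluated: at~$\gamma(t)$ the vectors $E^{\ast}_{\mu}$ form an orthonormal frame, but at~$\gamma(r(t))$ they are merely a (generally non-orthogonal) basis, which is why the auxiliary orthonormal frame $F^{\ast}_{\nu\vert\gamma(r(t))}$ is introduced only for the first term, while the second term is dispatched at~$\gamma(t)$ through basis-independence and the eigenbasis argument.
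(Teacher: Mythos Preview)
Your proposal is correct and follows precisely the route the paper indicates: use the basis-independence and the $P_t$-eigenbasis argument from the proof of Theorem~\ref{tangentialone} to kill the second summand, and for the first summand expand in the orthonormal frame $F^{\ast}_{\nu\vert\gamma(r(t))}$ and apply formula~(\ref{direct}) directly rather than passing through~(\ref{transform}). The paper itself only sketches this in the paragraph preceding the theorem, so your write-up is in fact more detailed than what appears there.
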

This is Theorem\,B from the introduction.

\bigskip

\section{The normal components of the tension fields of the $(k,r)$-maps of spheres}
\label{spheres}
In this section we compute the tension fields of the $(k,r)$-maps for the $(g,m_0,m_1)$-actions on $ \Sph^{n+1}$.

\smallskip

For any $(g,m_0,m_1)$-action we consider an arbitrary fixed normal geodesic $\gamma$ as in Section\,\ref{kmaps}.
 In the extended group diagram we have the $2g = \abs{W}$ non-principal isotropy groups $K_0,\ldots,K_{2g-1}$ at the positions $\gamma(i\tfrac{\pi}{g})$ with Lie algebras $\ak_0,\ldots,\ak_{2g-1}$.
  Note that $K_{i+g} = K_g$ by the linearity of the action. 
  Let us denote the quotients $\ak_i/\ah$ by $\am_i$. Each $\am_i$ induces the $m_i$-dimensional vector space $\am^{\ast}_i$ of action fields that vanish at $\gamma(i\tfrac{\pi}{g})$. 
  These action fields are Jacobi fields along any geodesic, in particular, along our fixed normal geodesic $\gamma$. Since they vanish at $\gamma(i\tfrac{\pi}{g})$ they are of the form $\sin (t-i\tfrac{\pi}{g}) \cdot v(t)$ where $v$ is parallel along $\gamma$. The covariant derivative $\nabla_{\dot\gamma(t)} X^{\ast}$ of any such action field $X^{\ast}$ is $\cos(t-i\tfrac{\pi}{g}) \cdot v(t)$ and hence $\am^{\ast}_{i\vert\gamma(t)}$ is an eigenspace of the shape operator $S_{\vert\gamma(t)}$ to the eigenvalue $-\cot(t-i\tfrac{\pi}{g})$ for any regular time $t\neq \tfrac{\pi}{g}\Z$. Since $S_{\vert\gamma(t)}$ is symmetric with respect to the induced Riemannian metric on the orbit $G\cdot\gamma(t)$ the $\am^{\ast}_{0\vert\gamma(t)},\ldots,\am^{\ast}_{g-1\vert\gamma(t)}$ are pairwise orthogonal. Note that necessarily $\am_{i+g} = \am_i$. Just from counting dimensions it follows that the $\am_i$ span $\ag/\ah$. Hence, the Lie algebras $\ak_0,\ldots,\ak_{g-1}$ span $\ag$. This is a general property of the isotropy groups of cohomogeneity one action on spaces of positive curvature called linear primitivity in \cite{gwz}.
Each $\am^{\ast}_{i\vert\gamma(t)}$ is also an eigenspace of the endomorphism $J_t^{r(t)}$ defined in the previous section to the eigenvalue $\sin(r(t)-i\tfrac{\pi}{g}) / \sin(t-i\tfrac{\pi}{g})$. Hence, $(J_t^{r(t)})^{\ast} = J_t^{r(t)}$.

\begin{theorem}
For a $(g,m)$-action on $\Sph^{mg+1}$ the normal component of the tension field of a $(k,r)$-map is given by
\begin{multline*}
  2\sin^2 gt \cdot \tau^{\nor}_{\vert\gamma(t)} = 2\sin^2 gt \cdot \ddot r(t) + mg\sin 2gt \cdot \dot r(t) \\
  - mg \bigl( (g-1)\sin 2(r(t)-t) + \sin 2(r(t)+(g-1)t) \bigr).
\end{multline*}
\label{spherenormal}
\end{theorem}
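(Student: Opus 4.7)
The approach is to specialize Theorem\,A to this situation, exploiting the unusually clean simultaneous eigenspace decomposition prepared at the start of this section. Concretely, for each $i = 0, \ldots, g-1$ the $m$-dimensional subspace $\am^{\ast}_{i\vert\gamma(t)}$ is at once an eigenspace of the shape operator $S_{\vert\gamma(t)}$ with eigenvalue $-\cot(t-i\tfrac{\pi}{g})$ and an eigenspace of $J_t^{r(t)}$ with eigenvalue $\lambda_i(t) := \sin(r(t)-i\tfrac{\pi}{g})/\sin(t-i\tfrac{\pi}{g})$. Since each Jacobi field arising from $X \in \am_i$ factors as $X^{\ast}_{\vert\gamma(t)} = \sin(t-i\tfrac{\pi}{g})v_X(t)$ with $v_X$ parallel along $\gamma$, parallel transport maps $\am^{\ast}_{i\vert\gamma(t)}$ into $\am^{\ast}_{i\vert\gamma(r(t))}$, so the conjugated operator $(\Pi_t^{r(t)})^{-1} S_{\vert\gamma(r(t))} \Pi_t^{r(t)}$ acts on $\am^{\ast}_{i\vert\gamma(t)}$ as multiplication by $-\cot(r(t)-i\tfrac{\pi}{g})$. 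In particular all three operators in Theorem\,A share these common eigenspaces, so $(J_t^{r(t)})^{\ast} = J_t^{r(t)}$ and each trace splits as a sum over the $g$ eigenspaces.

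With the classical partial-fraction identity $\sum_{i=0}^{g-1}\cot(t-i\tfrac{\pi}{g}) = g\cot gt$, one obtains $\tr S_{\vert\gamma(t)} = -mg\cot gt$, and consequently the contribution $-2\sin^2 gt \cdot \dot r(t) \tr S_{\vert\gamma(t)} = mg\sin 2gt \cdot \dot r(t)$, matching the second term of the claimed formula. For the last trace in Theorem\,A, the eigenvalue on $\am^{\ast}_{i\vert\gamma(t)}$ equals $-\lambda_i(t)^2\cot(r(t)-i\tfrac{\pi}{g})$, and so
\begin{equation*}
  2\sin^2 gt \cdot \tr\,(J_t^{r(t)})^{\ast}(\Pi_t^{r(t)})^{-1} S_{\vert\gamma(r(t))} \Pi_t^{r(t)} J_t^{r(t)} = -m\sum_{i=0}^{g-1} \frac{\sin^2 gt}{\sin^2(t-i\tfrac{\pi}{g})}\,\sin 2(r(t)-i\tfrac{\pi}{g}),
\end{equation*}
after using $\cot\theta\sin^2\theta = \tfrac{1}{2}\sin 2\theta$.

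It therefore remains to reduce this last sum to $mg\bigl((g-1)\sin 2(r(t)-t) + \sin 2(r(t)+(g-1)t)\bigr)$, after which adding the $2\sin^2 gt \cdot \ddot r(t)$ contribution completes the identification with the claimed expression. This reduction is the main obstacle; it is precisely the content of the non-standard trigonometric identity announced in the introduction as Lemma\,\ref{2parac}, whose proof is deferred to Section\,\ref{trigidentity}. A quick sanity check at $g=2$, where the sum collapses via $\sin 2(r-\pi/2) = -\sin 2r$ to $4\cos 2t \sin 2r(t)$ and the right-hand side rewrites as $2\bigl(\sin 2(r(t)-t) + \sin 2(r(t)+t)\bigr) = 4\cos 2t \sin 2r(t)$, confirms that the combinatorics match.
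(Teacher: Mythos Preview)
Your proposal is correct and follows essentially the same route as the paper's proof: apply Theorem\,A using the common eigenspace decomposition $\oplus_i \am^{\ast}_{i\vert\gamma(t)}$, evaluate the shape-operator trace via the cotangent identity, and reduce the remaining sum $\sum_i \sin 2(r-i\tfrac{\pi}{g})/\sin^2(t-i\tfrac{\pi}{g})$ by invoking Lemma\,\ref{2parac}. The only difference is that you spell out the eigenvalue computation for the conjugated operator in more detail than the paper does.
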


\begin{proof}
Let $t \neq \tfrac{\pi}{g}\Z$ be any regular time. Theorem\,A yields
\begin{align*}
\tau^{\nor}_{\vert\gamma(t)} = \ddot r(t)+m\sum_{i=0}^{g-1}\cot(t-i\tfrac{\pi}{g})\dot r(t)
  -\tfrac{m}{2}\sum_{i=0}^{g-1}\frac{\sin 2(r(t)-i\tfrac{\pi}{g})}{\sin^2(t-i\tfrac{\pi}{g})}.
\end{align*}
The invariant sums can be evaluated with the standard cotangent identity $g\cot gt = \sum_{i=0}^{g-1} \cot(t-i\tfrac{\pi}{g})$ and Lemma\,\ref{2parac}.
\end{proof}

\begin{theorem}
For a $(g,m_0,m_1)$-action on $\Sph^{n+1}$ with $n = \frac{m_0+m_1}{2}g$ the normal component of the tension field of a $(k,r)$-map is given by
\begin{multline*}
  4\sin^{2} gt \cdot \tau^{\nor}_{\vert\gamma(t)} =
  4\sin^{2} gt \cdot \ddot r(t) + \bigl( g(m_0+m_1)\sin 2gt + 2g(m_0-m_1)\sin gt \bigr)\dot r(t)\\
  - g(g -2)\sin 2(r(t)-t) \bigl( m_0+m_1 + (m_0-m_1)\cos gt \bigr)\\
  -2g \sin\bigl(2(r(t)-t )+gt\bigr) \bigl( (m_0+m_1)\cos gt +m_0-m_1\bigr).
\end{multline*}
\label{spherenormaltwo}
\end{theorem}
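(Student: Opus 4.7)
The plan is to extend the proof of Theorem~\ref{spherenormal} to unequal multiplicities by tracking both the symmetric and the alternating parts of the sequence $m_0, m_1, m_0, m_1, \ldots$ of Jacobi-field multiplicities along the normal geodesic. Applying Theorem~A to the orthogonal eigenspace decomposition $T_{\gamma(t)}(G\cdot\gamma(t)) = \bigoplus_{i=0}^{g-1} \am^{\ast}_{i\vert\gamma(t)}$ from the preamble to Theorem~\ref{spherenormal}, one finds that on each $m_i$-dimensional summand $\am^{\ast}_{i\vert\gamma(t)}$ the operators $S_{\vert\gamma(t)}$, $(\Pi_t^{r(t)})^{-1} S_{\vert\gamma(r(t))}\Pi_t^{r(t)}$, and $J_t^{r(t)}$ act by the scalars $-\cot(t-i\tfrac{\pi}{g})$, $-\cot(r(t)-i\tfrac{\pi}{g})$, and $\sin(r(t)-i\tfrac{\pi}{g})/\sin(t-i\tfrac{\pi}{g})$, respectively, since parallel transport along $\gamma$ preserves the eigenspace decomposition. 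Substitution into Theorem~A gives
\begin{gather*}
  \tau^{\nor}_{\vert\gamma(t)} = \ddot r(t) + \dot r(t) \sum_{i=0}^{g-1} m_i \cot(t-i\tfrac{\pi}{g}) - \tfrac{1}{2}\sum_{i=0}^{g-1} m_i \,\frac{\sin 2(r(t)-i\tfrac{\pi}{g})}{\sin^2(t-i\tfrac{\pi}{g})}.
\end{gather*}

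Next decompose $m_i = \tfrac{m_0+m_1}{2} + (-1)^i \tfrac{m_0-m_1}{2}$. The $\tfrac{m_0+m_1}{2}$-parts of the two sums are evaluated exactly as in the proof of Theorem~\ref{spherenormal} via the identity $g\cot gt = \sum_{i=0}^{g-1}\cot(t-i\tfrac{\pi}{g})$ and Lemma~\ref{2parac}; after the product-to-sum rewriting $2\cos gt\sin(2(r(t)-t)+gt) = \sin 2(r(t)+(g-1)t)+\sin 2(r(t)-t)$, they supply precisely the terms of the claimed formula that carry the factor $m_0+m_1$. The $\tfrac{m_0-m_1}{2}$-parts require the alternating companion identities, namely $\sum_{i=0}^{g-1}(-1)^i\cot(t-i\tfrac{\pi}{g}) = g/\sin gt$ for the $\dot r$-coefficient, and a matching alternating sine identity for the last sum. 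I expect the latter to be contained in Lemma~\ref{2parac} (or to follow by the same method used to prove it), yielding exactly the form $g(g-2)\cos gt\sin 2(r(t)-t) + 2g\sin(2(r(t)-t)+gt)$ after multiplication by $\sin^2 gt$. Note that the alternating contributions are nontrivial only when $m_0 \neq m_1$, which by M\"unzner's classification forces $g$ to be even, so the needed trigonometric identities have to be verified only in that case.

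Finally, multiplying the entire expression by $4\sin^2 gt$ and collecting terms according to the coefficients $m_0\pm m_1$ produces the stated formula. The main obstacle is establishing the alternating sine sum in precisely the form dictated by the $(m_0-m_1)$-contribution of the theorem; the standard cotangent sum and its alternating companion are elementary, the symmetric sine sum is already handled by Lemma~\ref{2parac}, and the remaining combining and simplification is mechanical trigonometric bookkeeping.
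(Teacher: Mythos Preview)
Your approach is correct and essentially the same as the paper's; the only difference is organizational. Where you split $m_i=\tfrac{m_0+m_1}{2}+(-1)^i\tfrac{m_0-m_1}{2}$ and seek an ``alternating companion'' to Lemma~\ref{2parac}, the paper instead sets $h=g/2$ (using that $g$ is even when $m_0\neq m_1$) and splits the index range by parity, so each sub-sum has step $\pi/h$ and the standard cotangent identity and Lemma~\ref{2parac} apply directly with $g$ replaced by $h$ and, for the odd-index sum, with $t$ and $r$ shifted by $\pi/g$; recombining the two applications produces exactly the $(m_0-m_1)$-terms you were hoping for, so your uncertainty about the alternating sine identity is resolved without proving any new lemma.
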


\begin{proof}
For odd $g$ we have $m_0 = m_1 = m$. In this case the claimed formula can be seen to be equivalent to the formula of Theorem\,\ref{spherenormal}. If $g$ is even, Theorem\,A yields
\begin{align*}
  \tau^{\nor}_{\vert\gamma(t)} = \ddot r(t) + \sum_{i=0}^{g-1} m_i \cot(t-i\tfrac{\pi}{g})\dot r(t)
   -\frac{1}{2}\sum_{i=0}^{g-1} m_i \frac{\sin 2(r(t)-i\frac{\pi}{g})}{\sin^2(t-i\tfrac{\pi}{g})}.
\end{align*}
We set $h:=\tfrac{g}{2}$. Using the standard cotangent identity we obtain
\begin{multline*}
\sum_{i=0}^{g-1}m_i\cot(t-i\tfrac{\pi}{g}) =\sum_{\ell=0}^{h-1}m_0\cot(t-\ell \tfrac{\pi}{h})+\sum_{\ell=0}^{h-1}m_1\cot(t-\tfrac{\pi}{g}-\ell\tfrac{\pi}{h})\\
 =h(m_0\cot ht-m_1\tan ht) =\tfrac{g}{2}\bigl((m_0+m_1)\cot gt + (m_0-m_1)\tfrac{1}{\sin gt}\bigr).
\end{multline*}
Furthermore, from Lemma\,\ref{2parac} we obtain
\begin{multline*}
\sum_{i=0}^{g-1}m_i\frac{\sin(2r-i\frac{2\pi}{g})}{\sin^2(t-i\tfrac{\pi}{g})}
  = \sum_{\ell=0}^{h-1}m_0\frac{\sin(2r-\ell\frac{2\pi}{h})}{\sin^2(t-2\ell\tfrac{\pi}{g})}
   + \sum_{\ell=0}^{h-1}m_1\frac{\sin(2r-\tfrac{2\pi}{g}-\ell\tfrac{2\pi}{h})}{\sin^2(t-\tfrac{\pi}{g}-\ell\frac{\pi}{h})}\\
  = \tfrac{1}{2} g (g-2) \sin 2(r-t)\bigl( (m_0+m_1)+(m_0-m_1)\cos gt\bigr)\sin^{-2} gt\\
     +g\sin(2(r-t)+gt)\bigl((m_0+m_1)\cos gt +m_0-m_1\bigr)\sin^{-2} gt. \qedhere
\end{multline*}
\end{proof}

Theorem\,E from the introduction follows immediately from Theorem\,\ref{spherenormaltwo}.

\bigskip

\section{The normal components of the tension fields of the $(k,r)$-maps of orthogonal groups}
\label{groups}
In this section we compute the tension fields of the reparametrized $(k,r)$-maps $g\cdot \tilde\gamma(2t) \mapsto g\cdot \tilde\gamma(2r(t))$ for the $(g,m_0,m_1)$-actions on $\SO(n+2)$ with the metric $\tfrac{1}{2} \tr X\tp Y$. Note that the reparametrization here is done in a way so that the tension field expression fits systematically to the tension field expression for the actions on spheres. We determine the extended group diagrams of these actions and investigate the interplay between the action and the parallel transport along the normal geodesic. The approach is similar to that in the previous section but the concrete procedure is more complicated since the Jacobi operator along the normal geodesics has two eigenvalues instead of one.

\smallskip

We first determine the extended group diagrams. By conjugating $G$ by a suitable element of $\SO(n+2)$ we can assume that $\gamma(t) = \bmat \cos t\\ \sin t\\ 0\emat$ is a normal geodesic such that $\gamma(0)$ is contained in the singular orbit $N_0$. This normal geodesic $\gamma$ can be lifted horizontally to the normal geodesic
\begin{gather*}
  \tilde \gamma(t) = \bmat \cos t & -\sin t & 0\\ \sin t & \cos t & 0\\ 0 & 0 & \one_n\emat,
\end{gather*}
for the lifted action on $\SO(n+2)$.

\smallskip

The principal isotropy group $H$ of the action $G\times \Sph^{n+1} \to \Sph^{n+1}$ along $\gamma$ is a subgroup of $\SO(n) = \bmat 1 \\ & 1 \\ & & \ast\emat \subset \SO(n+2)$. In particular, all elements of $H$ commute with $\tilde\gamma(t)$ for all $t\in \R$.

\smallskip

Denote by $G_{\gamma(t)}$ the isotropy groups of the original action along the normal geodesic $\gamma$ and by $\tilde G_{\tilde\gamma(t)}$ the isotropy groups of the lifted action along the normal geodesic $\tilde\gamma$.

\begin{lemma}
The isotropy groups $\tilde G_{\tilde\gamma(t)}$ of the lifted action satisfy
\begin{gather*}
  \tilde G_{\tilde\gamma(t)} = \bigl\{ \bigl( A,\tilde\gamma(t)^{-1}A\tilde\gamma(t) \bigr) \,\bigl\vert\; A \in G_{\gamma(t)} \bigl\}.
\end{gather*}
In particular, each $\tilde G_{\tilde\gamma(t)}$ is isomorphic to the isotropy group $G_{\gamma(t)}$ of the original action for every $t\in \R$.
\end{lemma}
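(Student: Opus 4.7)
The strategy is to unpack the definition of the lifted action applied to $\tilde\gamma(t)$ and then translate the block structure required of the $\SO(n+1)$-factor into a condition on the first factor. Writing the $\SO(n+1)$-factor as $B' = \bmat 1 & \\ & B\emat \in \SO(n+2)$, the fixed-point equation $(A,B') \cdot \tilde\gamma(t) = A\,\tilde\gamma(t)\,B'^{-1} = \tilde\gamma(t)$ is equivalent to $B' = \tilde\gamma(t)^{-1} A\, \tilde\gamma(t)$. This already shows that every element of $\tilde G_{\tilde\gamma(t)}$ has the asserted shape for some $A \in G$, and reduces the lemma to identifying exactly which $A$ produce a valid second factor, i.e.\ one of the required block form.

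The decisive observation is that the first column of $\tilde\gamma(t)$ equals $\gamma(t)$, so $\tilde\gamma(t)\, e_1 = \gamma(t)$. Consequently, the requirement that $B' = \tilde\gamma(t)^{-1} A\,\tilde\gamma(t)$ fix $e_1$ translates directly into $A\gamma(t) = \gamma(t)$, i.e.\ $A \in G_{\gamma(t)}$. Since $B'$ is automatically orthogonal as a conjugate of $A \in \SO(n+2)$, its first column being $e_1$ already forces the full block form $\bmat 1 & \\ & B\emat$, so there is no hidden constraint on the $(n+1)\times (n+1)$ block. This simultaneously gives both inclusions and the parametrization of $\tilde G_{\tilde\gamma(t)}$. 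The isomorphism with $G_{\gamma(t)}$ is then immediate from the projection onto the first factor, which is a group homomorphism (the group law on $G \times \SO(n+1)$ being componentwise) and bijective by the parametrization. No genuine obstacle arises here; the content of the lemma is essentially an unpacking of definitions, with the only mild subtlety being the automatic block form of the conjugate $\tilde\gamma(t)^{-1} A\,\tilde\gamma(t)$ once $A$ fixes $\gamma(t)$.
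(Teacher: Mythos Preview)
Your argument is correct and follows the same route as the paper's proof: both rewrite the isotropy condition as $\bmat 1 & \\ & B\emat = \tilde\gamma(t)^{-1}A\tilde\gamma(t)$ and read off that this forces $A\in G_{\gamma(t)}$. Your version is more explicit in justifying the equivalence via the observation $\tilde\gamma(t)e_1=\gamma(t)$ together with orthogonality of the conjugate, whereas the paper simply asserts the equivalence in one line.
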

\begin{proof}
$\tilde\gamma(t) = A\tilde\gamma(t)\bmat 1 &\\ & B^{-1}\emat$ holds if and only if $A\in G_{\gamma(t)}$ and
$\bmat 1 & \\ & B \emat = \tilde\gamma(t)^{-1}A\tilde\gamma(t)$.
\end{proof}

\begin{corollary}
The principal isotropy group $\tilde H$ along $\tilde\gamma$ is the diagonal group $\Delta H = \{(h,h)\,\vert\,h\in H\}$ where $H$ is the principal isotropy group along $\gamma$.
\end{corollary}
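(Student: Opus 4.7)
The plan is to deduce the corollary directly from the preceding lemma by exploiting the fact, already noted in the paragraph introducing the lift, that $H$ is a subgroup of $\SO(n)$ embedded in the bottom-right $n\times n$ block of $\SO(n+2)$.

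First I would specialize the formula $\tilde G_{\tilde\gamma(t)} = \{(A,\tilde\gamma(t)^{-1}A\tilde\gamma(t)) : A\in G_{\gamma(t)}\}$ to a regular time $t$, so that $G_{\gamma(t)} = H$ and $\tilde G_{\tilde\gamma(t)} = \tilde H$ (both isotropy groups are constant along the interior of the normal geodesic, and so is the formula's right-hand side). The task then reduces to showing that $\tilde\gamma(t)^{-1} A \tilde\gamma(t) = A$ for every $A \in H$.

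Next I would observe that because $H \subset \SO(n)$ sits in the lower-right block, every $A \in H$ has the form $\bigl(\begin{smallmatrix} 1 & & \\ & 1 & \\ & & A'\end{smallmatrix}\bigr)$, while $\tilde\gamma(t)$ is the identity on the last $n$ coordinates and acts only on the first two. Hence $A$ and $\tilde\gamma(t)$ commute as elements of $\SO(n+2)$, which gives $\tilde\gamma(t)^{-1}A\tilde\gamma(t) = A$.

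Substituting this back into the lemma yields $\tilde H = \{(A,A) : A\in H\} = \Delta H$, as required. There is no real obstacle here; the entire content of the corollary is the commutation of $H$ with the normal geodesic $\tilde\gamma$, which was built into the choice of $\tilde\gamma$ and of $H \subset \SO(n)$.
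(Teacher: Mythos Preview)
Your proof is correct and matches the paper's approach exactly: the corollary is stated without a separate proof in the paper, but the key ingredient---that every element of $H$ commutes with $\tilde\gamma(t)$---is already recorded in the paragraph preceding the lemma, and combining this with the lemma for a regular time $t$ gives $\tilde H = \Delta H$ immediately.
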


The extended group diagram of any $(g,m_0,m_1)$-action on $\SO(n+2)$ hence contains the $2g$ groups $\tilde K_i = \tilde G_{\tilde\gamma(i\frac{\pi}{g})}$ with Lie algebras $\tilde\ak_i$. We denote the quotients $\tilde\ak_i/\tilde \ah_i$ by $\widetilde \am_i$. The goal of the next considerations is in particular to show that
the corresponding spaces $\widetilde\am_i^{\ast}$ of action fields are the eigenspaces of the shape operator $\tilde S$ of the principal orbits to the eigenvalues $-\frac{1}{2}\cot \frac{t-t_i}{2}$ where $t_i = i\frac{\pi}{g}$.

\smallskip

The parallel transport along $\tilde\gamma$ from time $0$ to any other time $t$ is given by multiplying an element of the Lie algebra $\aso(n+2)$ by $\tilde\gamma(t/2)$ simultaneously from the left and the right. In the language of symmetric spaces this isometry is called a transvection.
We now split the normal bundle of the geodesic $\tilde\gamma$ in $\SO(n+2)$ into two orthogonal parallel distributions. The first distribution contains the action fields $X^{\ast}$ with $X \in \{0\}\times \aso(n)$:
\begin{gather}
\label{son}
  \aso(n)^{\ast}_{\vert\tilde\gamma(t)} = \tilde\gamma(t)\cdot \aso(n) = \tilde\gamma(t/2)\cdot\aso(n)\cdot \tilde\gamma(t/2) = \aso(n).
\end{gather}
The second distribution is given by the parallel translates $V_{\vert\tilde\gamma(t)} = \tilde\gamma(t/2) \cdot V_{\vert\tilde\gamma(0)}\cdot \tilde\gamma(t/2)$ of
\begin{gather*}
  V_{\vert\tilde\gamma(0)} = \Bigl\{ \bmat 0 & 0 & -x\tp \\ 0 & 0 & -y\tp \\ x & y & 0\emat \;\Big\vert\;
  x,y \in \R^n \Bigr\}.
\end{gather*}

\begin{lemma}
\label{normalJacobi}
The normal Jacobi operator $R_{\dot{\tilde\gamma}(t)}$ along $\tilde\gamma$ has only two eigenspaces, namely, $\aso(n)^{\ast}_{\vert\tilde\gamma(t)}$ and $V_{\vert\tilde\gamma(t)}$. The corresponding eigenvalues are $0$ and $1/4$, respectively.
\end{lemma}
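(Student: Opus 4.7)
My approach is to reduce to a single algebraic computation at the identity by using the fact that $\SO(n+2)$ with the metric $\tfrac{1}{2}\tr X\tp Y$ is a symmetric space, so the curvature tensor is parallel and the Jacobi eigendecomposition propagates along $\tilde\gamma$ via parallel transport. Concretely, the strategy is: compute the Jacobi operator at $\tilde\gamma(0)=e$ as an endomorphism of the orthogonal complement of $X_0:=\dot{\tilde\gamma}(0)\in\aso(n+2)$; check that $\aso(n)$ and $V_{\vert\tilde\gamma(0)}$ are its eigenspaces with eigenvalues $0$ and $\tfrac{1}{4}$; then transport these eigenspaces to $\tilde\gamma(t)$ using the transvection already identified in the paragraph preceding the lemma.

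For the algebraic input I would invoke the standard formula $R(Y,X_0)X_0 = -\tfrac{1}{4}[X_0,[X_0,Y]] = -\tfrac{1}{4}\mathrm{ad}_{X_0}^2 Y$ for left-invariant vector fields in a bi-invariant metric. With $X_0=\bmat 0 & -1 & 0\\ 1 & \m 0 & 0\\ 0 & \m 0 & 0\emat$, any $Y\in\aso(n)$ embedded in the lower-right block literally commutes with $X_0$ as a matrix, so $\mathrm{ad}_{X_0}Y=0$ and the Jacobi eigenvalue is $0$. For $W=\bmat 0 & 0 & -x\tp\\ 0 & 0 & -y\tp\\ x & y & 0\emat\in V_{\vert\tilde\gamma(0)}$, a direct $3{\times}3$ block product yields $[X_0,W]=\bmat 0 & 0 & \m y\tp\\ 0 & 0 & -x\tp\\ -y & x & \m 0\emat$, so that $\mathrm{ad}_{X_0}$ acts on the parameter pairs by $(x,y)\mapsto(-y,x)$. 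Consequently $\mathrm{ad}_{X_0}^2=-\mathrm{id}$ on $V_{\vert\tilde\gamma(0)}$ and the Jacobi eigenvalue is $\tfrac{1}{4}$.

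A short dimension count then shows these are all the eigenspaces: $\aso(n)\oplus V_{\vert\tilde\gamma(0)}$ has dimension $\tfrac{1}{2}n(n-1)+2n = \tfrac{1}{2}(n+1)(n+2)-1$, which is the dimension of the orthogonal complement of $X_0$ in $\aso(n+2)$, and orthogonality with respect to $\tfrac{1}{2}\tr X\tp Y$ follows from the fact that the matrices in $\R X_0$, $\aso(n)$ and $V_{\vert\tilde\gamma(0)}$ are supported in pairwise disjoint positions.

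Finally, the transvection $Z\mapsto\tilde\gamma(t/2)\cdot Z\cdot\tilde\gamma(t/2)$ is parallel transport along $\tilde\gamma$, and because $\tilde\gamma(t/2)$ lies in the one-parameter subgroup generated by $X_0$ it commutes with $\mathrm{ad}_{X_0}$ and hence with $\mathrm{ad}_{X_0}^2$. Therefore the two eigenspaces at $e$ are carried precisely to the two distributions $\aso(n)^{\ast}_{\vert\tilde\gamma(t)}$ and $V_{\vert\tilde\gamma(t)}$ described in the excerpt, and the Jacobi eigenvalues remain $0$ and $\tfrac{1}{4}$ for every $t$. I anticipate no substantial obstacle; the only point requiring care is keeping the sign and normalization conventions straight so that $-\tfrac{1}{4}\mathrm{ad}_{X_0}^2$, applied under the metric $\tfrac{1}{2}\tr X\tp Y$, yields exactly $\tfrac{1}{4}$ on $V$ rather than some other rational multiple.
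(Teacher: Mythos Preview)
Your proposal is correct and follows essentially the same approach as the paper: reduce to $t=0$ via parallel transport (transvections), invoke the standard formula $R_{\dot{\tilde\gamma}(0)}=-\tfrac{1}{4}\ad_{\dot{\tilde\gamma}(0)}^2$ for a bi-invariant metric, and verify directly that $\aso(n)$ and $V_{\vert\tilde\gamma(0)}$ are eigenspaces of $\ad_{\dot{\tilde\gamma}(0)}^2$ with eigenvalues $0$ and $-1$. Your version simply spells out the ``straightforward'' verification and adds an explicit dimension count, which is fine.
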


\begin{proof}
Because parallel transport is induced by isometries, it suffices to verify the statement at $t=0$. The Jacobi operator at time $0$ is given by $-\tfrac{1}{4}\ad_{\dot{\tilde\gamma}(0)}^2$ by standard results on the curvature tensor of compact Lie groups. It is straightforward to verify that $V_{\vert\tilde\gamma(0)}$ and $\aso(n)^{\ast}_{\vert\tilde\gamma(0)}$ are eigenspaces of $\ad_{\dot{\tilde\gamma}(0)}^2$ to the eigenvalues $-1$ and $0$, respectively.
\end{proof}

In order to keep the notation brief we denote the parallel translates of a vector $u(t_0) \in \aso(n)^{\ast}_{\vert\tilde\gamma(t_0)}$ along $\tilde \gamma$ simply by $u(t)$ and the parallel translates of a vector $v\in V_{\vert\tilde\gamma(t_0)}$ along $\tilde\gamma$ simply by $v(t)$.

\begin{corollary}
\label{jacobi}
Let $t_0 \in \R$ be an arbitrary time. The Jacobi field $Y$ along $\tilde\gamma(t)$ with initial data $Y(t_0) = u(t_0) + v(t_0)$ and $\tfrac{\nabla}{dt} Y(t_0) = u'(t_0) + v'(t_0)$ is given by
\begin{gather*}
  Y(t) = u(t) + (t-t_0)u'(t) + v(t)\cos\frac{t-t_0}{2} + 2v'(t) \sin \frac{t-t_0}{2}.
\end{gather*}
\end{corollary}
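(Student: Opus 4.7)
The plan is to solve the Jacobi equation $\frac{\nabla^2}{dt^2} Y + R_{\dot{\tilde\gamma}(t)} Y = 0$ by exploiting the parallel orthogonal splitting of the normal bundle of $\tilde\gamma$ established above. Since both distributions $\aso(n)^{\ast}$ and $V$ are defined by parallel transport along $\tilde\gamma$, any Jacobi field splits uniquely as $Y(t) = Y_1(t) + Y_2(t)$ with $Y_1(t) \in \aso(n)^{\ast}_{\vert\tilde\gamma(t)}$ and $Y_2(t) \in V_{\vert\tilde\gamma(t)}$, and the covariant derivative respects this splitting. By Lemma \ref{normalJacobi} the Jacobi operator preserves each summand, so the Jacobi equation decouples into
\begin{gather*}
  \tfrac{\nabla^2}{dt^2} Y_1 = 0, \qquad \tfrac{\nabla^2}{dt^2} Y_2 + \tfrac{1}{4} Y_2 = 0.
\end{gather*}

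Next I would pass to coordinates with respect to parallel frames of $\aso(n)^{\ast}$ and $V$, so that $\tfrac{\nabla}{dt}$ becomes plain $\tfrac{d}{dt}$ on the coefficient functions. In the $\aso(n)^{\ast}$-factor the equation $\ddot f = 0$ forces $f(t) = f(t_0) + (t-t_0)\dot f(t_0)$, which in the notation of the statement reads $Y_1(t) = u(t) + (t-t_0) u'(t)$, using the convention that $u(t)$ and $u'(t)$ denote the parallel translates of the prescribed initial value and initial derivative. In the $V$-factor the equation $\ddot f + \tfrac{1}{4} f = 0$ has general solution $f(t) = A\cos\tfrac{t-t_0}{2} + B\sin\tfrac{t-t_0}{2}$. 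Matching $f(t_0) = v(t_0)$ gives $A = v(t_0)$ (i.e.\ the parallel field $v(t)$), and matching $\dot f(t_0) = v'(t_0)$ gives $B = 2v'(t_0)$, yielding $Y_2(t) = v(t)\cos\tfrac{t-t_0}{2} + 2v'(t)\sin\tfrac{t-t_0}{2}$.

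Summing the two contributions gives exactly the formula claimed in the corollary. The only technical point to watch is the factor of $2$ in front of $v'$, which arises solely from differentiating $\sin\tfrac{t-t_0}{2}$; everything else is a routine linear ODE together with the observation that parallel translation commutes with the two spectral projections, which in turn follows from the fact that $\aso(n)^{\ast}$ and $V$ are parallel distributions as verified in equation \eqref{son} and in the paragraph defining $V_{\vert\tilde\gamma(t)}$.
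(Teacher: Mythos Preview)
Your argument is correct and follows essentially the same idea as the paper's proof: both rest on Lemma~\ref{normalJacobi}, which diagonalizes the Jacobi operator along $\tilde\gamma$ with eigenvalues $0$ and $\tfrac14$ on the parallel distributions $\aso(n)^{\ast}$ and $V$. The only stylistic difference is that the paper simply verifies that the stated formula satisfies the Jacobi equation and the initial conditions (appealing implicitly to uniqueness), whereas you derive the formula by decoupling and solving the two constant-coefficient linear ODEs; the mathematical content is the same.
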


\begin{proof}
The vector field $Y$ given in the formula solves the Jacobi field equation $\frac{\nabla^2}{dt^2} Y(t) + R_{\dot{\tilde\gamma}(t)} Y(t) = 0$ and has the required inital values.
\end{proof}

Corollary\,\ref{jacobi} shows in particular that every parallel Jacobi field is an action field. Moreover,
for the restriction of action fields $X^{\ast}$ to the normal geodesic $\tilde\gamma$ the linear term on the right hand side of the formula in Corollary\,\ref{jacobi} has to vanish. Indeed, we have $X^{\ast}_{\vert\tilde\gamma(t)} = X^{\ast}_{\vert\tilde\gamma(t+2\pi)}$ because of the periodicity of the normal geodesic $\tilde\gamma$.

\begin{lemma}
Any action field $X^{\ast}$ with $X \in \widetilde \am_i$ is an eigenfield of the Jacobi operator to the eigenvalue $1/4$, i.e., $\widetilde \am_i^{\ast}\subset V$. For regular times $t\not\in\Z\cdot \frac{\pi}{g}$ the vector $X^{\ast}_{\vert\tilde\gamma(t)}$ is an eigenvector of the shape operator $\tilde S_{\vert\tilde\gamma(t)}$ of the principal orbit $\tilde G\cdot \tilde\gamma(t)$ to the eigenvalue $-\frac{1}{2}\cot \frac{t - t_i}{2}$ with $t_i = i\frac{\pi}{g}$.
\end{lemma}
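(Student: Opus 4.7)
The plan is to exploit that the restriction of any action field $X^{\ast}$ to $\tilde\gamma$ is a Jacobi field, and to plug this into the explicit Jacobi formula of Corollary \ref{jacobi} together with the eigenspace description of Lemma \ref{normalJacobi}. A preliminary observation I would record is that the two distributions $\aso(n)^{\ast}$ and $V$ are mutually orthogonal along $\tilde\gamma$: at $t=0$ this is visible from the block structure of $\aso(n+2)$, and since parallel transport is an isometry the orthogonality persists; equivalently, the two distributions are eigenspaces of the symmetric Jacobi operator to distinct eigenvalues.

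For $X \in \widetilde\am_i$, the field $X^{\ast}$ vanishes at $\tilde\gamma(t_i)$ because $X$ lies in the Lie algebra of the isotropy group $\tilde K_i$. Setting $t_0 = t_i$ in Corollary \ref{jacobi} decomposes $X^{\ast}_{\vert\tilde\gamma(t)}$ into an $\aso(n)^{\ast}$-part $u(t) + (t-t_i)u'(t)$ and a $V$-part $v(t)\cos\tfrac{t-t_i}{2} + 2v'(t)\sin\tfrac{t-t_i}{2}$. The linear summand $(t-t_i)u'(t)$ must vanish by the periodicity $X^{\ast}_{\vert\tilde\gamma(t+2\pi)} = X^{\ast}_{\vert\tilde\gamma(t)}$ already noted in the remark preceding the lemma. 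The vanishing of $X^{\ast}$ at $t=t_i$ combined with the orthogonality of $\aso(n)^{\ast}$ and $V$ forces $u(t_i) = 0$ and $v(t_i) = 0$, and parallelism of $u$ and $v$ extends these identities to all $t$. What remains is
\[
  X^{\ast}_{\vert\tilde\gamma(t)} = 2v'(t)\sin\tfrac{t-t_i}{2},
\]
which lies in $V_{\vert\tilde\gamma(t)}$ and is therefore an eigenvector of the Jacobi operator to the eigenvalue $1/4$; this gives the first claim.

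For the shape operator assertion I would use the identity $\tilde S \cdot X^{\ast}_{\vert\tilde\gamma(t)} = -\nabla_{\dot{\tilde\gamma}(t)} X^{\ast}$ recalled in the remark after Theorem \ref{normalone}. Covariantly differentiating the closed form above and using that $v'$ is parallel produces $\nabla_{\dot{\tilde\gamma}(t)} X^{\ast} = v'(t)\cos\tfrac{t-t_i}{2}$. At regular times $t \notin \Z\cdot\tfrac{\pi}{g}$ the factor $\sin\tfrac{t-t_i}{2}$ is nonzero, so solving for $v'(t)$ in terms of $X^{\ast}_{\vert\tilde\gamma(t)}$ yields the eigenvalue $-\tfrac{1}{2}\cot\tfrac{t-t_i}{2}$. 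I do not foresee a substantive obstacle; the only mildly non-trivial ingredient is the orthogonality of the two parallel distributions used to isolate the two components, but this is either immediate from block structure or follows from Lemma \ref{normalJacobi} via distinct eigenvalues of a symmetric operator.
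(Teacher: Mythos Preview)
Your proof is correct and follows essentially the same route as the paper: use Corollary~\ref{jacobi} with initial time $t_0=t_i$, kill the linear $\aso(n)^{\ast}$-part by periodicity and the constant parts $u$, $v$ by the vanishing $X^{\ast}_{\vert\tilde\gamma(t_i)}=0$, leaving $X^{\ast}_{\vert\tilde\gamma(t)}=2v'(t)\sin\tfrac{t-t_i}{2}$, and then differentiate to get the shape-operator eigenvalue. The paper's proof is terser and does not spell out the orthogonality of $\aso(n)^{\ast}$ and $V$ to separate the vanishing components, but this is implicit in the decomposition and your explicit justification of it is a welcome clarification rather than a different argument.
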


\begin{proof}
The action field $X^{\ast}$ vanishes at $\gamma(t_i)$. Hence, along $\tilde\gamma$ it is of the form
\begin{gather*}
  X^{\ast}_{\vert\tilde\gamma(t)} = 2v'(t) \sin \frac{t-t_i}{2}
\end{gather*}
for some $v'\in V_{\vert\tilde\gamma(t_i)}$. This shows that $X^{\ast}_{\vert\tilde\gamma(t)}$ is an eigenvector of the Jacobi operator to the eigenvalue $\frac{1}{4}$. Moreover, we have
\begin{gather*}
  \tilde S_{\vert\tilde\gamma(t)}X^{\ast}_{\vert\tilde\gamma(t)} = -\nabla_{\dot{\tilde\gamma}(t)} X^{\ast}
    = -v'(t)\cos\frac{t - t_i}{2}.\qedhere
\end{gather*}
\end{proof}

\begin{corollary}
\label{orthogonalDecomp}
We have $V = \oplus_{i=0}^{2g-1} \widetilde\am_i^{\ast}$ where the sum is orthogonal.
\end{corollary}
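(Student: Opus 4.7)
The plan is to reduce the assertion to a pointwise statement at an arbitrary regular time, invoke symmetry of the shape operator for orthogonality, and close by matching dimensions.

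First I would note that, by the preceding lemma, each $\widetilde\am^{\ast}_i$ is contained in $V$, so the sum $\sum_{i=0}^{2g-1}\widetilde\am^{\ast}_i$ automatically sits inside $V$, and it suffices to check the decomposition fibrewise at a single regular time $t\not\in\tfrac{\pi}{g}\Z$. At such a $t$, the same lemma identifies $\widetilde\am^{\ast}_{i\vert\tilde\gamma(t)}$ with the eigenspace of the shape operator $\tilde S_{\vert\tilde\gamma(t)}$ to the eigenvalue $-\tfrac{1}{2}\cot\tfrac{t-t_i}{2}$. For distinct $i,j\in\{0,\ldots,2g-1\}$ the quantity $\tfrac{t_j-t_i}{2}=\tfrac{(j-i)\pi}{2g}$ lies in $(-\pi,\pi)\setminus\{0\}$ and hence is not a multiple of $\pi$, so the two eigenvalues are distinct. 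Since $\tilde S_{\vert\tilde\gamma(t)}$ is symmetric with respect to the induced metric on the principal orbit, the corresponding eigenspaces are pairwise orthogonal; in particular the sum is direct and orthogonal.

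To close the argument I would match dimensions. The description $\tilde G_{\tilde\gamma(t)}=\{(A,\tilde\gamma(t)^{-1}A\tilde\gamma(t)):A\in G_{\gamma(t)}\}$ gives an isomorphism $\tilde\ak_i/\tilde\ah\cong\ak_i/\ah$, so $\dim\widetilde\am_i=m_i$. At a regular $t$ every sine factor $\sin\tfrac{t-t_i}{2}$ is nonzero, hence the evaluation map $\widetilde\am_i\to\widetilde\am^{\ast}_{i\vert\tilde\gamma(t)}$ is injective and $\dim\widetilde\am^{\ast}_{i\vert\tilde\gamma(t)}=m_i$. Combining with the identity $m_{i+g}=m_i$ inherited from the sphere action in Section\,\ref{spheres} and the Takagi--Takahashi alternation yields
$$\sum_{i=0}^{2g-1}\dim\widetilde\am^{\ast}_{i\vert\tilde\gamma(t)}=g(m_0+m_1)=2n.$$
On the other hand, the explicit description of $V_{\vert\tilde\gamma(0)}$ as matrices determined by two free column vectors in $\R^n$ shows that $V$ is a parallel rank-$2n$ subbundle, so the orthogonal direct sum exhausts $V$.

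No step here is delicate: the decomposition is essentially forced by the eigenvalue structure of the shape operator together with a dimension count. The only semantic point requiring attention is that the lifted action inherits the multiplicities of the sphere action, which is immediate from the explicit form of $\tilde G_{\tilde\gamma(t)}$ recorded earlier.
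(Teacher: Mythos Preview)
Your proof is correct and follows essentially the same approach as the paper: containment in $V$ from the preceding lemma, pairwise orthogonality because the $\widetilde\am_i^{\ast}$ are eigenspaces of the symmetric shape operator to distinct eigenvalues, and equality by matching the common dimension $2n$. Your version is simply more explicit, spelling out why the eigenvalues are pairwise distinct and how the dimension count $\sum_{i=0}^{2g-1}m_i=g(m_0+m_1)=2n$ works, whereas the paper leaves these as evident.
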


\begin{proof}
Both spaces have the same dimension $2n$ and the right space is a subspace of the left one.
The $\widetilde\am_i^{\ast}$ are mutually orthogonal since they are eigenspaces of the shape operator to distinct eigenvalues.
\end{proof}

\begin{remark}
Usually there does not exist any biinvariant metric on $\tilde G = G \times \SO(n+1)$ such that the
orthogonal complements of $\tilde \ah$ in $\tilde \ak_i$ (which will by abuse of notation later also denoted by $\tilde \am_i$) are mutually orthogonal within the Lie algebra $\tilde \ag$.
\end{remark}

Note once again that for the $(g,m_0,m_1)$-actions on $\SO(n+2)$ in this section we use the reparametrization $\gamma(2t) \mapsto \tilde\gamma(2r(t))$ of the $(k,r)$-maps. Thus the formula of Theorem\,\ref{normalone} for the normal component of the tension field becomes
\begin{gather*}
  2\,\tau^{\nor}_{\vert\tilde\gamma(2t)} = \ddot r(t) - 2\dot r(t) \tr \tilde S_{\vert \tilde\gamma(2t)}
    + 2\tr\, (J_{2t}^{2r(t)})^{\ast} (\Pi_{2t}^{2r(t)})^{-1} \tilde S_{\vert \tilde\gamma(2r(t))} \Pi_{2t}^{2r(t)} J_{2t}^{2r(t)}.
\end{gather*}

\begin{lemma}
Each $\widetilde\am^{\ast}_{i\vert\tilde\gamma(2t)}$ is an eigenspace of the endomorphism $J_{2t}^{2r(t)}$ to the eigenvalue $\sin(r(t)-\frac{t_i}{2}) / \sin (t-\frac{t_i}{2})$. In particular, $(J_{2t}^{2r(t)})^{\ast} = J_{2t}^{2r(t)}$.
\end{lemma}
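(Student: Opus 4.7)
The plan is to leverage the explicit Jacobi field description of action fields given in the preceding lemma. First I would fix $X\in\widetilde\am_i$; then $X^{\ast}$ vanishes at $\tilde\gamma(t_i)$ with $t_i=i\pi/g$, and the previous lemma (invoking Corollary~\ref{jacobi} with $u=u'=0$ and $v=0$) shows that $X^{\ast}_{\vert\tilde\gamma(s)} = 2\,v'(s)\sin\tfrac{s-t_i}{2}$ for a suitable $V$-valued parallel vector field $v'$ along $\tilde\gamma$. Evaluating this at the two reparametrized times $s=2t$ and $s=2r(t)$ I would then obtain
$$X^{\ast}_{\vert\tilde\gamma(2t)} = 2\,v'(2t)\sin\bigl(t-\tfrac{t_i}{2}\bigr), \qquad X^{\ast}_{\vert\tilde\gamma(2r(t))} = 2\,v'(2r(t))\sin\bigl(r(t)-\tfrac{t_i}{2}\bigr).$$

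Next I would feed these into the definition of $J_{2t}^{2r(t)}$, which sends $X^{\ast}_{\vert\tilde\gamma(2t)}$ to $(\Pi_{2t}^{2r(t)})^{-1}X^{\ast}_{\vert\tilde\gamma(2r(t))}$. Since $v'$ is parallel, $(\Pi_{2t}^{2r(t)})^{-1}v'(2r(t)) = v'(2t)$, so the two expressions above differ exactly by the scalar factor $\sin(r(t)-\tfrac{t_i}{2})/\sin(t-\tfrac{t_i}{2})$. This identifies $\widetilde\am_i^{\ast}$ as an eigenspace of $J_{2t}^{2r(t)}$ with the claimed eigenvalue.

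For the self-adjointness statement, I would argue that $J_{2t}^{2r(t)}$ admits a fully orthogonal eigenspace decomposition with real eigenvalues. The tangent space of the principal orbit at $\tilde\gamma(2t)$ splits as $\aso(n)^{\ast}_{\vert\tilde\gamma(2t)}\oplus V_{\vert\tilde\gamma(2t)}$, and these summands are orthogonal because they are eigenspaces of the normal Jacobi operator with distinct eigenvalues (Lemma~\ref{normalJacobi}). On the first summand equation~(\ref{son}) shows that the action fields are parallel along $\tilde\gamma$, so $J_{2t}^{2r(t)}$ acts there as the identity. The second summand further splits orthogonally as $V=\oplus_{i=0}^{2g-1}\widetilde\am_i^{\ast}$ by Corollary~\ref{orthogonalDecomp}, and we have just seen that each piece is an eigenspace of $J_{2t}^{2r(t)}$ with a real eigenvalue. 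An endomorphism possessing such an orthogonal eigenspace decomposition with real eigenvalues is automatically self-adjoint.

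I do not expect any genuine obstacle here: the argument is a direct translation of the Jacobi field picture into statements about $J$. The only care needed is to carry the factor of $2$ from the reparametrization through consistently and to keep in mind that the $\aso(n)^{\ast}$ summand, although not named in the statement of the lemma, must still be accounted for (though trivially, via parallelism) when establishing the self-adjointness assertion.
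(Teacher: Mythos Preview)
Your proposal is correct and matches the paper's intended argument. The paper in fact states this lemma without an explicit proof, regarding it as an immediate consequence of the preceding Jacobi field description $X^{\ast}_{\vert\tilde\gamma(s)} = 2v'(s)\sin\tfrac{s-t_i}{2}$ together with the orthogonal decomposition of Corollary~\ref{orthogonalDecomp}; you have simply written out these details, including the observation that the $\aso(n)^{\ast}$ summand is handled by parallelism and contributes the eigenvalue~$1$.
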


We now have gathered all the information to compute the normal component of the tension field.

\begin{theorem}
For a $(g,m)$-action on $\SO(mg+2)$ the normal component of the tension field $\tau$ of the reparametrized $(k,r)$-map $g\cdot\tilde\gamma(2t) \mapsto g\cdot\tilde\gamma(2r(t))$ is given by
\begin{multline*}
  4\sin^2 2gt \cdot \tau^{\nor}_{\vert\tilde\gamma(2t)} =
  2\sin^2 2gt \cdot \ddot r(t) + 2mg\sin 4gt \cdot \dot r(t) \\
  - 2mg \bigl( (2g-1)\sin 2(r(t)-t) + \sin 2(r(t)+(2g-1)t) \bigr).
\end{multline*}
\label{groupnormal}
\end{theorem}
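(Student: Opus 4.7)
The plan is to apply Theorem~A to the reparametrized $(k,r)$-map $g\cdot\tilde\gamma(2t)\mapsto g\cdot\tilde\gamma(2r(t))$. Writing $\tilde r(s):=2r(s/2)$, the map takes the form $\tilde\gamma(s)\mapsto\tilde\gamma(\tilde r(s))$ with $\tilde r'(2t)=\dot r(t)$ and $\tilde r''(2t)=\tfrac12\ddot r(t)$. Substituting $s=2t$ into Theorem~A and multiplying by $2$ gives the starting formula
\begin{gather*}
  2\,\tau^{\nor}_{\vert\tilde\gamma(2t)}
  = \ddot r(t) - 2\dot r(t)\,\tr\tilde S_{\vert\tilde\gamma(2t)}
  + 2\,\tr\,(J_{2t}^{2r(t)})^{\ast}(\Pi_{2t}^{2r(t)})^{-1}\tilde S_{\vert\tilde\gamma(2r(t))}\Pi_{2t}^{2r(t)}J_{2t}^{2r(t)}
\end{gather*}
displayed in the paragraph preceding the theorem.

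Next I would use the decomposition of the tangent space to the principal orbit at $\tilde\gamma(2t)$ as $\aso(n)^{\ast}\oplus V$, with $V=\bigoplus_{i=0}^{2g-1}\widetilde\am_i^{\ast}$ by Corollary~\ref{orthogonalDecomp}. By (\ref{son}) the fields in $\aso(n)^{\ast}$ are parallel along $\tilde\gamma$, so $\tilde S$ vanishes on this distribution at every time and $J_{2t}^{2r(t)}$ restricts to the identity on it; hence $\aso(n)^{\ast}$ contributes nothing to either trace. Each $\widetilde\am_i^{\ast}$ has dimension $m$, is an eigenspace of $\tilde S_{\vert\tilde\gamma(2t)}$ with eigenvalue $-\tfrac12\cot(t-\tfrac{i\pi}{2g})$, and is an eigenspace of $J_{2t}^{2r(t)}$ with eigenvalue $\sin(r(t)-\tfrac{i\pi}{2g})/\sin(t-\tfrac{i\pi}{2g})$. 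Collecting the contributions over the $2g$ eigenspaces yields
\begin{gather*}
  \tr\tilde S_{\vert\tilde\gamma(2t)} = -\tfrac{m}{2}\sum_{i=0}^{2g-1}\cot\bigl(t-\tfrac{i\pi}{2g}\bigr),\\
  2\,\tr\,(J_{2t}^{2r(t)})^{\ast}(\Pi_{2t}^{2r(t)})^{-1}\tilde S_{\vert\tilde\gamma(2r(t))}\Pi_{2t}^{2r(t)}J_{2t}^{2r(t)}
  = -\tfrac{m}{2}\sum_{i=0}^{2g-1}\frac{\sin 2(r(t)-\tfrac{i\pi}{2g})}{\sin^{2}(t-\tfrac{i\pi}{2g})}.
\end{gather*}

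These two sums have exactly the same structure as in the proof of Theorem~\ref{spherenormal}, but with $g$ replaced by $2g$. The classical cotangent identity gives $\sum_{i=0}^{2g-1}\cot(t-\tfrac{i\pi}{2g}) = 2g\cot 2gt$, while Lemma~\ref{2parac} applied with parameter~$2g$ produces
\begin{gather*}
  \sin^{2}2gt\sum_{i=0}^{2g-1}\frac{\sin 2(r(t)-\tfrac{i\pi}{2g})}{\sin^{2}(t-\tfrac{i\pi}{2g})} = 2g\bigl((2g-1)\sin 2(r(t)-t)+\sin 2(r(t)+(2g-1)t)\bigr).
\end{gather*}
Substituting back and multiplying through by $2\sin^{2}2gt$, together with $4\sin^{2}2gt\cdot\cot 2gt = 2\sin 4gt$, yields the stated identity. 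The main obstacle is really just careful bookkeeping: tracking the factors of $2$ produced by the reparametrization $s=2t$ and confirming that the parallel distribution $\aso(n)^{\ast}$ contributes zero to both traces; the trigonometric heart of the argument is literally the sphere computation with $g$ replaced by $2g$.
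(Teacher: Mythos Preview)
Your proposal is correct and follows essentially the same approach as the paper: both start from the displayed formula for $2\,\tau^{\nor}_{\vert\tilde\gamma(2t)}$, evaluate the traces using the eigenspace decomposition $V=\bigoplus_{i=0}^{2g-1}\widetilde\am_i^{\ast}$ with the known eigenvalues of $\tilde S$ and $J_{2t}^{2r(t)}$, and then close the argument with the cotangent identity and Lemma~\ref{2parac} applied with parameter~$2g$. Your write-up is in fact a little more explicit than the paper's---you spell out the reparametrization $\tilde r(s)=2r(s/2)$ and the reason the parallel distribution $\aso(n)^{\ast}$ contributes nothing to either trace---but there is no difference in the underlying argument.
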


\begin{proof}
Let $t \neq \Z\tfrac{\pi}{g}$ be any regular time. Evaluating the above formula yields
\begin{align*}
2\,\tau^{\nor}_{\vert\tilde\gamma(2t)} = \ddot r(t)+m\sum_{i=0}^{2g-1}\cot(t-i\tfrac{\pi}{2g})\dot r(t)
-\frac{m}{2}\sum_{i=0}^{2g-1}\frac{\sin 2(r(t)-i\frac{\pi}{2g})}{\sin^2(t-i\tfrac{\pi}{2g})}.
\end{align*}
The invariant sums can be evaluated with the standard cotangent identity $2g\cot 2gt = \sum_{i=0}^{2g-1} \cot(t-i\tfrac{\pi}{2g})$ and Lemma\,\ref{2parac}.
\end{proof}

\begin{theorem}
For a $(g,m_0,m_1)$-action on $\SO(n+2)$, $n = g\frac{m_0+m_1}{2}$ the normal component of the tension field of the reparametrized $(k,r)$-map $g\cdot\tilde\gamma(2t) \mapsto g\cdot\tilde\gamma(2r(t))$ is given by
\begin{multline*}
  8\sin^{2} 2gt \cdot \tau^{\nor}_{\vert\gamma(t)} =
  4\sin^{2} 2gt \cdot \ddot r(t) + \bigl( 2g(m_0+m_1)\sin 4gt + 4g(m_0-m_1)\sin 2gt \bigr)\dot r(t)\\
  - 2g(2g -2)\sin 2(r-t) \bigl( m_0+m_1 + (m_0-m_1)\cos 2gt \bigr)\\
  -4g \sin(2(r-t)+2gt) \bigl( (m_0+m_1)\cos 2gt +m_0-m_1\bigr).
\end{multline*}
\label{groupnormaltwo}
\end{theorem}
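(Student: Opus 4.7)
The plan is to follow exactly the pattern of the proof of Theorem \ref{spherenormaltwo}, with $g$ replaced by $2g$ throughout. The setup from the preceding lemmas is what makes this possible: the orthogonal decomposition $V = \bigoplus_{i=0}^{2g-1}\widetilde\am_i^{\ast}$ (Corollary \ref{orthogonalDecomp}), together with $\aso(n)^{\ast}\subset \ker R_{\dot{\tilde\gamma}}$, exhibits each $\widetilde\am_i^{\ast}_{\vert\tilde\gamma(2t)}$ as a simultaneous eigenspace of the shape operator $\tilde S_{\vert\tilde\gamma(2t)}$ (with eigenvalue $-\tfrac{1}{2}\cot(t-\tfrac{i\pi}{2g})$) and of the symmetric endomorphism $J_{2t}^{2r(t)}$ (with eigenvalue $\sin(r(t)-\tfrac{i\pi}{2g})/\sin(t-\tfrac{i\pi}{2g})$). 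Since the action is the lift of a $(g,m_0,m_1)$-action on $\Sph^{n+1}$, the dimensions $\dim\widetilde\am_i$ alternate between $m_0$ and $m_1$ as $i$ runs from $0$ to $2g-1$, while the flat $\aso(n)^{\ast}$-summand contributes nothing to $\tr \tilde S$ (its curvature eigenvalue is zero, and parallel action fields contribute parallel pieces, i.e., zero shape operator).

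First, plugging these eigenvalue data into Theorem \ref{normalone} (adjusted for the factor of $2$ from the reparametrization $\gamma(2t)\mapsto\tilde\gamma(2r(t))$) yields
\begin{gather*}
  2\,\tau^{\nor}_{\vert\tilde\gamma(2t)} = \ddot r(t) + \sum_{i=0}^{2g-1} m_{i\bmod 2}\cot\bigl(t-\tfrac{i\pi}{2g}\bigr)\dot r(t)
    - \tfrac{1}{2}\sum_{i=0}^{2g-1} m_{i\bmod 2}\,\frac{\sin 2\bigl(r(t)-\tfrac{i\pi}{2g}\bigr)}{\sin^2\bigl(t-\tfrac{i\pi}{2g}\bigr)},
\end{gather*}
for any regular $t\notin \tfrac{\pi}{2g}\Z$.

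Second, I would split each sum into its even-indexed and odd-indexed parts ($i=2\ell$ and $i=2\ell+1$ for $\ell=0,\ldots,g-1$), so that even terms carry multiplicity $m_0$ with arguments shifted by $\ell\pi/g$, and odd terms carry $m_1$ with arguments shifted by $\tfrac{\pi}{2g}+\ell\pi/g$. The standard cotangent identity $g\cot gs = \sum_{\ell=0}^{g-1}\cot(s-\ell\pi/g)$ applied to the two halves (with the $\pi/(2g)$-shift turning cotangent into $-\tan$ via $\cot(x-\pi/2)=-\tan x$) collapses the $\dot r$-coefficient to $g\bigl((m_0+m_1)\cot 2gt + (m_0-m_1)/\sin 2gt\bigr)$, which matches the desired coefficient $\bigl(2g(m_0+m_1)\sin 4gt + 4g(m_0-m_1)\sin 2gt\bigr)/(4\sin^2 2gt)$ after a double-angle rewrite. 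The same two-halves splitting, combined now with Lemma \ref{2parac} applied in each half, evaluates the $\sin 2(\cdot)/\sin^2(\cdot)$-sum to exactly the two-term expression involving $\sin 2(r-t)$ and $\sin(2(r-t)+2gt)$ with the weights $(m_0+m_1) \pm (m_0-m_1)\cos 2gt$, again divided by $\sin^2 2gt$.

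Finally, clearing denominators and multiplying through by $4\sin^2 2gt$ converts the identity for $2\,\tau^{\nor}$ into an identity for $8\sin^2 2gt\cdot\tau^{\nor}$, producing the stated formula. The only nontrivial ingredient is Lemma \ref{2parac}, which was already established, so there is no real obstacle beyond careful bookkeeping; the structural reason the argument works is that the sum over $2g$ equally spaced positions with alternating weights $m_0,m_1$ is formally identical to the sphere sum of Theorem \ref{spherenormaltwo} for the parameter value $2g$ (which is always even, so the even/odd index split is automatic and we do not need the separate odd-$g$ subcase that appeared there).
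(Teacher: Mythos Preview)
Your proposal is correct and matches the paper's approach exactly. The paper's proof reads in its entirety ``Analogous to the proof of Theorem\,\ref{spherenormaltwo},'' and what you have written is precisely that analogy spelled out: the preceding lemmas supply the eigenvalue data for $\tilde S$ and $J_{2t}^{2r(t)}$ on the $\widetilde\am_i^{\ast}$ (with the $\aso(n)^{\ast}$-summand contributing zero), whence the reparametrized Theorem~A formula becomes the sum over $2g$ equally spaced angles with alternating weights $m_0,m_1$; since $2g$ is even, only the even/odd index split of Theorem~\ref{spherenormaltwo} is needed, and the standard cotangent identity together with Lemma~\ref{2parac} finish the computation.
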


\begin{proof}
Analogous to the proof of Theorem\,\ref{spherenormaltwo}.
\end{proof}

Theorem\,F from the introduction follows immediately from Theorem\,\ref{groupnormaltwo}.

\bigskip

\section{Linear solutions of the $(g,m_0,m_1,k)$-boundary value problems}
\label{linearsolutions}
In Sections \ref{spheres} and \ref{groups} we computed the normal component of the tension field of the $(k,r)$-maps for any $(g,m_0,m_1)$-action on a sphere $\Sph^{n+1}$ and for any $(g,m_0,m_1)$-action on an orthogonal group $\SO(n+2)$ where $n = \frac{m_0+m_1}{2} g$. This normal component vanishes if
$r$ solves the $(g,m_0,m_1,k)$-boundary value problem or the $(2g,m_0,m_1,k)$-boundary value problem, respectively. Here, we determine when the linear function $r(t) = kt$ is a solution of these boundary value problems
and thus prove Lemma\,G from the introduction.

\begin{lemma}
For $m_0\neq m_1$ the linear solution $r(t)=kt$ with $k = jg+1$, $j\in\Z$, is a solution of the $(g,m_0,m_1,k)$-boundary value problem if and only if $k=1$ or $g = 2$ and $k=-1$.
\end{lemma}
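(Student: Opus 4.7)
The plan is to substitute $r(t) = kt$ with $k = jg + 1$ directly into the $(g,m_0,m_1,k)$-boundary value problem of Theorem~E and reduce the resulting trigonometric equation to a finite Fourier identity. Since $\ddot r = 0$ and $r(t) - t = jgt$, expanding all mixed factors via $\sin\alpha\cos\beta = \tfrac{1}{2}(\sin(\alpha+\beta) + \sin(\alpha-\beta))$ and setting $A := m_0+m_1$, $B := m_0-m_1$, the ODE collapses, after division by $g$, to
\begin{multline*}
  0 = 2kB\sin gt + kA\sin 2gt - \tfrac{g-2}{2}B\sin(2j-1)gt - (g-1)A\sin 2jgt\\
   - \tfrac{g+2}{2}B\sin(2j+1)gt - A\sin(2j+2)gt,
\end{multline*}
which must hold for all $t$.

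I would then invoke linear independence of the functions $t \mapsto \sin(\ell g t)$ for pairwise distinct $|\ell|$ and analyze which of the six frequencies above can coincide in absolute value. A short check of integer equations shows that $|2j+1|$ equals one of the five values $1$, $2$, $|2j-1|$, $|2j|$, $|2j+2|$ only when $j \in \{0,-1\}$. Consequently, for every other integer $j$ the term $-\tfrac{g+2}{2} B \sin(2j+1) g t$ is the sole contribution at its frequency, and identical vanishing forces $\tfrac{g+2}{2} B = 0$; since $g \ge 1$ and $B = m_0 - m_1 \neq 0$ by hypothesis, no such $j$ can occur.

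It then remains to substitute $j = 0$ and $j = -1$ into the collapsed identity, taking care of the sign flip $\sin(-\alpha) = -\sin\alpha$ and of the degenerate case $\sin 0 = 0$, which consolidate several terms. For $j = 0$ (that is, $k = 1$) every remaining coefficient is zero, so $r(t) = t$ is always a solution. For $j = -1$ (that is, $k = 1-g$) the coefficients of $\sin 2gt$ and of the constant term vanish automatically, while the coefficients of $\sin gt$ and $\sin 3gt$ reduce to $\tfrac{3(2-g)}{2}B$ and $\tfrac{g-2}{2}B$, respectively; both vanish precisely when $g = 2$, yielding $k = -1$.

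The main obstacle is purely combinatorial: one must carefully consolidate the signed contributions at the frequencies $|2j-1|g$, $|2j|g$, $|2j+1|g$, $|2j+2|g$ when these become negative or coincide with $g$ or $2g$ for small $|j|$. Once that bookkeeping is correct, the strict positivity of the factor $(g+2)/2$ multiplying $B$ at the generically isolated frequency $(2j+1)g$ essentially carries the whole argument.
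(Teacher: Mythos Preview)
Your argument is correct. The Fourier expansion is accurate, the isolation of the frequency $|2j+1|$ for $j\notin\{0,-1\}$ is valid, and the bookkeeping in the two remaining cases checks out (your phrase ``constant term'' should really read ``the $\sin 0$ contribution'', but that is cosmetic).

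The paper takes a much shorter route: it simply evaluates the ODE at the single point $t=\tfrac{\pi}{2g}$, where $\sin gt=1$, $\cos gt=0$, and $\sin 2gt=0$, so almost every term drops out and one is left with $k=\sin\tfrac{(2j+1)\pi}{2}=\pm 1$; the case $k=-1$ together with $k=jg+1$ then forces $g=2$ because $m_0\neq m_1$ already implies $g$ is even. Your approach, by contrast, expands the whole equation as a finite Fourier sum and uses linear independence. This is more laborious but has the virtue of being self-contained: you obtain $g=2$ directly from the coefficient of $\sin 3gt$, without appealing to the M\"unzner-type fact that $m_0\neq m_1$ forces $g$ even. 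The paper's trick of evaluating at a well-chosen point is worth remembering, as it bypasses the combinatorics entirely.
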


\begin{proof}
The {\em if}-part is straightforward. In order to prove the {\em only if}-part we plug $r(t)=kt$ into the ODE of the $(g,m_0,m_1,k)$-boundary value problem and evaluate at $t = \tfrac{\pi}{2g}$. After straightforward algebraic manipulations we obtain the equation $k = \sin \frac{2j+1}{2}\pi$, i.e., $k = \pm 1$.
\end{proof}

\begin{lemma}
For $m_0= m_1 =: m$ the linear solution $r(t)=kt$ with $k = jg+1$, $j\in\Z$, is a solution of the $(g,m_0,m_1)$-BVP if and only if $j = 0$ or $j = -1$, i.e., $k= 1$ or $k = 1-g$.
\end{lemma}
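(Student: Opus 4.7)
The plan is to substitute the ansatz $r(t)=kt$ directly into the simplified ODE \eqref{ode}. Since $\dot r \equiv k$ and $\ddot r \equiv 0$, and writing $k=jg+1$ so that $r(t)-t = jgt$ and $r(t)+(g-1)t = (j+1)gt$, the equation reduces, after dividing by $mg$, to the trigonometric identity
$$(jg+1)\sin 2gt = (g-1)\sin 2jgt + \sin 2(j+1)gt,$$
which must hold for all $t\in(0,\pi/g)$ and hence, by real-analyticity, for all real $t$. The whole proof consists in determining for which integers $j$ this identity holds.

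The \emph{if} direction is immediate. For $j=0$ (so $k=1$) both sides collapse to $\sin 2gt$. For $j=-1$ (so $k=1-g$) the left side equals $(1-g)\sin 2gt$, while the right side equals $(g-1)\sin(-2gt)+\sin 0 = (1-g)\sin 2gt$.

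For the \emph{only if} direction I would rewrite $\sin 2jgt = \sgn(j)\sin 2|j|gt$ and view the identity as a real linear combination of the three sines $\sin 2gt$, $\sin 2|j|gt$, $\sin 2|j+1|gt$. Their positive frequencies can coincide only for $j\in\{0,-1,1,-2\}$, since the equation $|j|=|j+1|$ has no integer solution. For every other integer $j$ these three sines are linearly independent on any open interval, so all three coefficients would have to vanish; however the coefficient $\pm 1$ of $\sin 2(j+1)gt$ is a contradiction.

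The main (and only genuinely nontrivial) step is the frequency-collision analysis in the remaining two borderline exponents. For $j=1$ the identity collapses to $2\sin 2gt-\sin 4gt = 2\sin 2gt(1-\cos 2gt)$, which is not identically zero. For $j=-2$ the same manipulation produces $-2(g-1)\sin 2gt(1-\cos 2gt)$, which is not identically zero as soon as $g\geq 2$. Together with the two explicit solutions above, this exhausts all integers $j$ and proves the lemma.
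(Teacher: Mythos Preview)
Your argument is correct and reaches the same conclusion, but the route is genuinely different from the paper's. After the common reduction to
\[
k\sin 2gt \;=\; (g-1)\sin 2(k-1)t + \sin 2(k-1+g)t,
\]
the paper simply evaluates at the single point $t_0=\tfrac{\pi}{4g}$ (so that $\sin 2gt_0=1$) and obtains $\lvert k\rvert \le (g-1)+1=g$, which forces $j\in\{0,-1\}$ immediately. Your proof instead analyses the frequency content of the identity: you observe that for $j\notin\{-2,-1,0,1\}$ the three sines $\sin 2gt$, $\sin 2\lvert j\rvert gt$, $\sin 2\lvert j{+}1\rvert gt$ have pairwise distinct positive frequencies and are therefore linearly independent, contradicting the non-zero coefficient of the last one; the two borderline values $j=1$ and $j=-2$ are then disposed of by the explicit factorisation $2\sin 2gt\,(1-\cos 2gt)$ (times $1$ or $-(g-1)$). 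The paper's single-evaluation trick is shorter and more elegant; your frequency analysis is more systematic and makes transparent exactly which collisions of Fourier modes have to be checked by hand.

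One small remark: your own computation for $j=-2$ shows the obstruction is $-2(g-1)\sin 2gt\,(1-\cos 2gt)$, which vanishes identically when $g=1$; so your final sentence (``this exhausts all integers $j$ and proves the lemma'') tacitly assumes $g\ge 2$. The paper's bound $\lvert k\rvert\le g$ has precisely the same blind spot, since for $g=1$ it only yields $j\in\{-2,-1,0\}$. In fact $r(t)=-t$ \emph{does} solve the $(1,m,m)$-ODE, so the lemma as literally stated needs $g\ge 2$; you were right to flag the caveat, and you might say so explicitly rather than let the last sentence paper over it.
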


\begin{proof}
Plugging $r(t) = kt$ into the ODE (\ref{ode}) yields
\begin{gather*}
  k \sin 2gt = (g-1) \sin 2(k-1)t + \sin 2(k-1+g)t.
\end{gather*}
It is straightforward to verify that this condition vanishes for $k=1$ and $k=1-g$. For general $k$ we evaluate the equation at $t_0 = \frac{\pi}{4g}$ and obtain
\begin{gather*}
  k = (g-1) \sin 2(k-1)t_0 + \sin 2(k-1+g)t_0.
\end{gather*}
Hence $\abs{k} \le g -1 + 1 = g$. This implies $j = 0$ or $j = -1$.
\end{proof}
The two lemmas above together are equivalent to Lemma\,G from the introduction.

\bigskip

\section{The tangential components of the tension fields}
\label{tangentialparts}
In this section we prove Theorem\,H from the introduction, i.e., we show that the tangential component of the tension field of any $(k,r)$-map for any $(g,m_0,m_1)$-action on $\Sph^{n+1}$ and on $\SO(n+2)$ vanishes except possibly for
\begin{gather*}
  (g,m_0,m_1) \in \{ (4,2,2\ell+1), (4,4,4\ell+3), (4,4,5), (4,6,9) \}.
\end{gather*}
We pursue two different strategies in the proof.

\smallskip

The first strategy is to employ Schur's lemma and finally apply Theorem\,\ref{tanaltpart}. This strategy works for the $(g,m_0,m_1)$-actions on $\Sph^{n+1}$ and $\SO(n+2)$ where the $\am_i$ in the decomposition
\begin{gather*}
  \ag = \ah \oplus \am_0 \oplus \am_1 \oplus \ldots \oplus \am_{g-1}
\end{gather*}
are inequivalent irreducible $H$-modules and where the $H$-module $\ah$ does not contain any irreducible submodules equivalent to some of the $\am_i$, i.e., it works for $g \le 3$, for $(g,m) = (4,2)$ and for $(g,m) = (6,2)$. Note that in this section we define $\am_i$ as the orthogonal complement of $\ah$ in $\ak_i$ with respect to the biinvariant metric $\frac{1}{2}\tr X\tp Y$ on $G \subset \SO(n+2)$, whereas previously we used the quotient definition $\am_i = \ak_i/\ah$.

\smallskip

The second, more general strategy is to determine the fixed point set of $H$ or $\tilde H$, respectively, and to employ the action of the Weyl group. This strategy works even without using the action of the Weyl group for $g \le 3$ if one employs all possible (even discrete) isometries that leave the foliation of the sphere invariant. In each of these cases the fixed point set of the principal isotropy group on the sphere $\Sph^{n+1}$ is just the unparametrized normal geodesic. In the cases where discrete isometries are available, i.e., for $g=1$, $g=2$ and $(g,m) = (3,2)$ the action has to be lifted to an action on $\OO(n+2)$ rather than on $\SO(n+2)$. This is no problem since the foliation of $\SO(n+2)$ by the orbits of an unextended $(g,m_0,m_1)$-action can be obtained from the foliation of $\OO(n+2)$ by the orbits of the extended $(g,m_0,m_1)$-action by intersecting the orbits with $\SO(n+2)$. The fixed point set of the principal isotropy group of the lifted action then consists of finitely many disjoint copies of normal geodesics, for example $2^{24}$ disjoint copies for $g = 3$, $m = 8$. Since we know by the first strategy that the tangential components of the tension fields of the $(k,r)$-maps for the $(g,m_0,m_1)$-actions with $g\le 3$ vanish, we do not provide any details about the computations of the fixed point sets in these cases. We rather work the second approach out in detail for $(g,m_0,m_1) = (4,m_0,1)$ and for $(g,m) = (6,1)$, since the first strategy does not work in these cases.

\smallskip

The tangential components of the tension fields of the $(k,r)$-maps for the remaining $(g,m_0,m_1)$-actions on $\Sph^{n+1}$ and $\SO(n+2)$ with
\begin{gather*}
  (g,m_0,m_1) \in \{(4,2,2\ell+1), (4,4,4\ell+3), (4,4,5), (4,6,9)\}
\end{gather*}
could be computed in an analogous way. We have avoided the lengthy computations and leave the question open whether the tangential components of the tension fields of all $(k,r)$-maps vanish in these cases. By Lemma\,G
from the introduction there are no linear solutions $r(t) = kt$ to the $(g,m_0,m_1,k)$-boundary value problem except for $k = 1$ in these cases anyway.

\subsection{Using Schur's Lemma}
The goal of this subsection is to establish the following result.

\begin{theorem}
\label{Schur}
Let $G \times \Sph^{n+1} \to \Sph^{n+1}$ be a $(g,m_0,m_1)$-action such that the $\am_i$ in the decomposition
\begin{gather}
\label{spheredecomp}
  \ag = \ah \oplus \am_0 \oplus \am_1 \oplus \ldots \oplus \am_{g-1}
\end{gather}
are inequivalent irreducible $H$-modules and such that the $H$-module $\ah$ does not contain any irreducible submodules equivalent to some of the $\am_i$. Then the tangential component of the tension field of any $(k,r)$-map for the $(g,m_0,m_1)$-action on $\Sph^{n+1}$ and on $\SO(n+2)$ vanishes.
\end{theorem}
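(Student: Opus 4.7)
\emph{Strategy.} My plan is to apply Theorem\,\ref{tanaltpart} and force the commutator sum $\sum_{\mu}[E_{\mu},P_{r(t)}E_{\mu}]$ to vanish using the representation-theoretic restrictions that Schur's lemma imposes on the metric endomorphism $P_t$.

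\emph{Sphere case.} The endomorphism $P_t\colon\mathfrak{n}\to\mathfrak{n}$ is $\Ad_H$-equivariant and $Q$-symmetric, and by hypothesis $\mathfrak{n}=\am_0\oplus\cdots\oplus\am_{g-1}$ is the $H$-isotypic decomposition with every isotypic component of multiplicity one. Schur's lemma therefore forces $P_t$ to preserve each $\am_i$; since a $Q$-symmetric $H$-equivariant endomorphism of an irreducible real $H$-module is a real scalar in all three Schur types $\R$, $\C$, $\H$ (the non-real part of $\mathrm{End}_H(\am_i)$ is anti-$Q$-symmetric), we conclude $P_t\vert_{\am_i}=\lambda_i(t)\cdot\id$ for real functions $\lambda_i$. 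Choosing a $Q$-orthogonal basis of $\mathfrak{n}$ respecting the decomposition and rescaling by $\lambda_i(t)^{-1/2}$ on each $\am_i$, I obtain vectors $E_{\mu}^{(i)}\in\am_i$ whose action fields at $\gamma(t)$ form an orthonormal basis of $T_{\gamma(t)}(G\cdot\gamma(t))$; each $E_{\mu}^{(i)}$ is then automatically an eigenvector of $P_{r(t)}$ with eigenvalue $\lambda_i(r(t))$. Consequently
\begin{gather*}
\sum_{\mu}[E_{\mu},P_{r(t)}E_{\mu}]=\sum_i\lambda_i(r(t))\sum_{E_{\mu}\in\am_i}[E_{\mu},E_{\mu}]=0,
\end{gather*}
and Theorem\,\ref{tanaltpart} delivers $\tau^{\tan}_{\vert\gamma(t)}=0$.

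\emph{$\SO(n+2)$ case.} For the lifted action by $\tilde G=G\times\SO(n+1)$ with principal isotropy $\tilde H=\Delta H$ I would repeat the strategy, now with a biinvariant metric $\tilde Q$ on $\tilde G$ restricting to $Q$ on $G$ and to $\tfrac{1}{2}\tr X\tp Y$ on $\SO(n+1)$, and with the associated endomorphism $\tilde P_t\colon\tilde{\mathfrak{n}}\to\tilde{\mathfrak{n}}$. The complement decomposes as the $\Delta H$-invariant direct sum
\begin{gather*}
\tilde{\mathfrak{n}}=\mathfrak{n}\oplus\Delta^-\ah\oplus\mathfrak{c},
\end{gather*}
where $\Delta^-\ah=\{(X,-X)\colon X\in\ah\}$ and $\mathfrak{c}$ is the orthogonal complement of $\ah$ in $\aso(n+1)$. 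The hypothesis prevents any $\am_i$ from appearing in $\Delta^-\ah\cong\ah$; a direct inspection for the admissible list $g\le 3$, $(g,m)=(4,2)$, $(g,m)=(6,2)$ rules this out for $\mathfrak{c}$ as well, so each $\am_i$ remains a multiplicity-one $H$-isotypic component of $\tilde{\mathfrak{n}}$ and Schur again forces $\tilde P_t$ to act by a real scalar on it. The $\mathfrak{n}$-contribution to the commutator sum in Theorem\,\ref{tanaltpart} then vanishes exactly as in the sphere case. For the contribution of basis vectors in $\Delta^-\ah\oplus\mathfrak{c}$ I would invoke that the $\SO(n+1)$-factor acts on $\SO(n+2)$ by right multiplication, so the corresponding action fields are right-invariant Killing fields; with respect to a biinvariant metric, the covariant derivative of such fields is proportional to their Lie bracket, so summing over an orthonormal basis annihilates the expression by antisymmetry.

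\emph{Main obstacle.} The delicate step is the case-by-case verification that no $\am_i$ occurs as an $H$-submodule of $\mathfrak{c}$; it is precisely this check that restricts the Schur strategy to the small list of actions claimed. A subsidiary technical point is making the Killing-field cancellation rigorous for the $\Delta^-\ah$-summand, whose action fields agree with right-invariant vector fields only at the two singular points $\tilde\gamma(2t)$ and $\tilde\gamma(2r(t))$ that enter the formula of Theorem\,\ref{tanaltpart}, so the cancellation has to be applied pointwise at those tangent-space decompositions rather than globally along the geodesic.
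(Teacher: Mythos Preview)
Your sphere argument is correct and matches the paper's: Schur's lemma forces $P_t$ to be a scalar on each $\am_i$, and then Theorem~\ref{tanaltpart} kills the commutator sum.

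The $\SO(n+2)$ argument, however, has a genuine gap at its central claim. You assert that, after a case-by-case check, no $\am_i$ occurs as an $H$-submodule of $\mathfrak{c}$ (the orthogonal complement of $\ah$ in $\aso(n+1)$), so that each $\am_i$ stays multiplicity-one inside $\tilde{\mathfrak n}$. This is false in every case. Indeed $\aso(n+1)=\aso(n)\oplus\R^n$ as $H$-modules, and the $\R^n$ summand is isomorphic to the tangent space of a principal orbit, hence to $\am_0\oplus\cdots\oplus\am_{g-1}$; so $\mathfrak{c}$ always contains one copy of each $\am_i$. Worse, the paper's projections $\tilde\ap_i\subset\{0\}\times\aso(n)$ exhibit (generically) a further copy of $\am_i$ sitting inside $\aso(n)\ominus\ah$. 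Thus each $\am_i$ has multiplicity at least two, typically three, in $\tilde{\mathfrak n}$, and Schur's lemma no longer forces $\tilde P_t$ to act by a scalar on $\am_i\times\{0\}$. In fact $\tilde P_t$ genuinely mixes these copies (cf.\ the Remark after Corollary~\ref{orthogonalDecomp} that the $\tilde\am_i$ are not $\tilde Q$-orthogonal), so your commutator-sum cancellation collapses.

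This is exactly the obstacle the paper works around. Instead of seeking scalar action, it builds for each basis vector $E_{i,\mu}\in\am_i$ a three-dimensional subspace $\tilde\ab_{i,\mu}=\spann\{\tilde E_{i,\mu},\sigma(\tilde E_{i,\mu}),\pi(\tilde E_{i,\mu})\}$ that is simultaneously (i) $\tilde P_t$-invariant for every regular $t$ and (ii) an \emph{abelian} subalgebra of $\tilde\ag$ (this last point uses the concrete matrix form in Corollary~\ref{specialform}). Abelianness replaces the lost scalar property: if $E_\mu\in\tilde\ab_{i,\mu}$ then $\tilde P_{r(t)}E_\mu\in\tilde\ab_{i,\mu}$ too, so $[E_\mu,\tilde P_{r(t)}E_\mu]=0$ without $\tilde P_{r(t)}$ being scalar there. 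The remaining summands $\tilde\ah_\pm$ and $\tilde\aq$ are handled separately as honest eigenspaces. Your proposed treatment of $\Delta^-\ah\oplus\mathfrak{c}$ via right-invariant Killing fields is too sketchy to assess, but it is moot once the $\mathfrak n$-part of the argument fails.
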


Before we turn to the proof of this theorem we show for which triples $(g,m_0,m_1)$ the hypothesis holds.

\begin{lemma}
The hypothesis of Theorem\,\ref{Schur} holds for $g \le 3$ and for $(g,m_0,m_1) \in \{(4,2,2),(6,2,2)\}$.
\end{lemma}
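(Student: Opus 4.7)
The plan is to verify the hypothesis by a case-by-case analysis using the Hsiang--Lawson classification of cohomogeneity one actions on spheres. Since the $(k,r)$-map $g\cdot\gamma(t)\mapsto g\cdot\gamma(r(t))$ and the spaces $\am_i^{\ast}$ along the normal geodesic depend only on the orbit foliation, we may substitute any orbit-equivalent action for $G$ and pick the most convenient representative in each orbit-equivalence class. In each case we then read off the $H$-module structure of $\ag = \ah \oplus \am_0 \oplus \cdots \oplus \am_{g-1}$ and verify the two conditions: (i) the $\am_i$ are pairwise inequivalent irreducible $H$-modules, and (ii) no $\am_i$ occurs as an irreducible summand of $\ah$.

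First I would dispatch $g=1$. Here the principal orbit is a sphere $\Sph^m$ acted on transitively by $G$, so that $G/H$ is one of the classical homogeneous spheres $\SO(m{+}1)/\SO(m)$, $\SU(k{+}1)/\SU(k)$, $\Syp(k{+}1)/\Syp(k)$, $\Spin(7)/\Gtwo$, $\Spin(9)/\Spin(7)$ or $\Gtwo/\SU(3)$. The single module $\am_0 \cong T_p\Sph^m$ is the standard isotropy representation and is irreducible. Whenever the most obvious $G$ yields an accidental equivalence $\am_0 \cong \ah$ (as for $\SO(4)/\SO(3)$, where $\aso(3)$ appears both as the standard and as the adjoint representation of $\SO(3)$) we pass to a smaller orbit-equivalent group --- e.g.\ $\Syp(1)$ on $\Sph^3$ with trivial $H$ --- after which the hypothesis is trivially satisfied.

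For $g=2$ I would use that each action is orbit-equivalent to the isotropy representation of a rank-two Grassmannian; the two modules $\am_0,\am_1$ are then the two tangent representations at the two singular orbits, visibly inequivalent and distinct from the adjoint summands of $\ah$. For $g=3$ the four orbit-equivalence classes are realized by the isotropy representations of $\SU(3)/\SO(3)$, $(\SU(3)\times\SU(3))/\Delta\SU(3)$, $\SU(6)/\Syp(3)$ and $\Esix/\Ffour$; the Weyl group $D_3$ acts transitively on $\{\am_0,\am_1,\am_2\}$, so irreducibility of a single summand suffices, after which comparison of weights on a maximal torus of $H$ establishes pairwise inequivalence and non-occurrence in $\ah$. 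For the exceptional triples $(4,2,2)$ and $(6,2,2)$ I would use the explicit group diagrams from \cite{gwz}: for $(6,2,2)$ the action is $\Gtwo \times \Gtwo$ on $\Gtwo$ via $(A,B) \cdot C = A C B^{-1}$, whose principal isotropy is a maximal torus $T^2 \subset \Gtwo$, and the six $\am_i$ are the six two-dimensional positive root spaces of $\gtwo$, pairwise inequivalent and disjoint from the Cartan subalgebra $\ah$. The $(4,2,2)$ case is analogous, with $H$ containing a maximal torus of the semisimple part of the acting group and the four $\am_i$ realized as distinct two-dimensional root subspaces.

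I expect the main obstacle to be the case $(3,8,8)$ coming from $\Esix/\Ffour$: here the principal isotropy contains $\Spin(7)$, and establishing that no $\am_i$ appears as an $H$-submodule of $\ah$ requires unpacking the branching of the $26$-dimensional $\Ffour$-module under $\Spin(7) \subset \Ffour$. In every other case the required inequivalences are forced by dimension counts or by the distinctness of torus weights, which makes the verification a straightforward structural exercise.
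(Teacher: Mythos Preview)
Your plan is essentially the paper's case-by-case verification. For $g\le 3$ the paper names the principal orbits directly --- $\SO(m{+}1)/\SO(m)$, $\bigl(\SO(m_0{+}1)\times\SO(m_1{+}1)\bigr)/\bigl(\SO(m_0)\times\SO(m_1)\bigr)$, and the flag manifolds $\SO(3)/(\Z_2\times\Z_2)$, $\SU(3)/T^2$, $\Syp(3)/\Syp(1)^3$, $\Ffour/\Spin(8)$ --- and invokes their standard isotropy decompositions rather than going through the rank-two symmetric spaces and torus-weight comparisons you propose. For $(4,2,2)$ and $(6,2,2)$ the paper uses the adjoint actions of $\Syp(2)$ and $\Gtwo$ on the unit spheres in their Lie algebras, the $\am_i$ being the root spaces; this is exactly what your $\Gtwo\times\Gtwo$ picture reduces to once you pass from the symmetric space to its isotropy representation on the sphere (you have described the symmetric pair, not the induced action on $\Sph^{13}\subset\gtwo$).

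Your anticipated ``main obstacle'' at $(3,8,8)$ rests on a misidentification of the principal isotropy group: it is $\Spin(8)$, not merely a group containing $\Spin(7)$, and the relevant decomposition is that of $\ag=\ffour$ under $H$, not of the $26$-dimensional ambient module. The three $8$-dimensional modules $\am_0,\am_1,\am_2$ are the vector and the two half-spin representations of $\Spin(8)$, pairwise inequivalent because triality is an \emph{outer} automorphism, while $\ah=\aso(8)$ is the irreducible $28$-dimensional adjoint representation and therefore contains none of them. So this case is in fact the most transparent of the four $g=3$ cases, not the hardest. (Your side observation that the model $G=\SO(m{+}1)$ fails the hypothesis at $m=3$, where the adjoint and standard representations of $\SO(3)$ coincide, is a genuine subtlety that the paper's proof does not address.)
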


\begin{proof}
For the $(1,m)$-action on $\Sph^{m+1}$ the principal orbit is $\SO(m+1)/\SO(m)$ and the Lie algebra $\aso(m+1)$ decomposes as an $\SO(m)$-module into the adjoint representation of $\SO(m)$ and the irreducible standard representation on $\R^m$.

Similarly, the principal orbit of the $(2,m_0,m_1)$-action is
\begin{gather*}
  \SO(m_0+1)\times\SO(m_1+1)/\SO(m_0)\times \SO(m_1)
\end{gather*}
and the Lie algebra $\aso(m_0+1) \oplus \aso(m_1+1)$ decomposes as an $\SO(m_0) \times \SO(m_1)$-module into the adjoint representations of $\SO(m_0)$ and $\SO(m_1)$ and the irreducible standard representations of $\SO(m_0)$ and $\SO(m_1)$.

For $g = 3$ the principal orbits are diffeomorphic to the manifolds of flags in the projective planes $\RP^2$, $\CP^2$, $\HP^2$, and $\OP^2$, i.e., diffeomorphic to
\begin{gather*}
  \SO(3)/\Z_2\times \Z_2, \quad \SU(3)/T^2, \quad \Syp(3)/\Syp(1)^3, \quad \text{and} \quad \Ffour/\Spin(8).
\end{gather*}
It is well-known that the isotropy representation of each of these spaces splits into three inequivalent $m$-dimensional $H$-modules all of which are inequivalent to the submodules of $\ah$. These decompositions of $\ag$ given by the isotropy representation are precisely the decompositions (\ref{spheredecomp}) of $\ag$ by Schur's Lemma.

The adjoint actions of the compact Lie groups $\Syp(2)$ and $\Gtwo$ provide the $(4,2)$-action on $\Sph^9$ and the $(6,2)$-action on $\Sph^{13}$. In each case decomposition (\ref{spheredecomp}) is the root space decomposition of the Lie group, which clearly has the desired property.
\end{proof}

The proof of Theorem\,\ref{Schur} is simple for the actions on the spheres. Indeed, by Schur's Lemma, the $H$-equivariant endomorphism $P_{t}$ is diagonal for any regular time $t \not \in \frac{\pi}{g}\Z$. Hence, the vanishing of the tangential component follows from Theorem\,\ref{tangentialone} by choosing an orthonormal basis compatible with the decomposition (\ref{spheredecomp}).

\smallskip

The proof of Theorem\,\ref{Schur} for the lifted actions is given in the rest of this subjection. We first need to establish some more facts about the actions on the spheres. As before, let $G \times \Sph^{n+1} \to \Sph^{n+1}$ be a $(g,m_0,m_1)$-action with normal geodesic $\gamma(t) = \bmat \cos t\\ \sin t\\ 0\emat$. The non-regular isotropy groups $K_i$ appear at the points $\gamma(t_i)$ with $t_i = i \frac{\pi}{g}$. The principal isotropy group $H$ along $\gamma$ is constant and equal to the isotropy groups $K_{i\vert\dot\gamma(t_i)}$ of the actions of $K_i$ on the normal spaces to the orbits $G\cdot\gamma(t_i)$ at $\gamma(t_i)$. The orbit $K_i\cdot \dot\gamma(t_i)$ is a linear and hence totally geodesic subsphere $S_i^{m_i}$ of $\Sph^{n+1}$. We endow $G \subset \SO(n+2)$ with the biinvariant metric $\frac{1}{2}\tr X\tp Y$. The orthogonal complements $\am_i$ of the Lie algebra $\ah$ of $H$ in the Liealgebras $\ak_i$ of $K_i$ are irreducible $H$-modules by assumption. Hence, there exists up to a constant factor just one homogeneous metric on $K_i/H$. The map $K_i \to S_i^{m_i}$, $k\mapsto k\cdot \dot\gamma(t_i)$ is up to a constant factor a Riemannian submersion from $K_i$ with the biinvariant metric inherited from $G$ to $S_i^{m_i}$ with the standard metric inherited from $\Sph^{n+1}$. Let $e_{i,1},\ldots, e_{i,m_i}$ denote an orthonormal basis of $T_{\dot \gamma(t_i)} S_i^{m_i}$ and $E_{i,1}, \ldots, E_{i,m_i}$ denote their horizontal lifts to $\am_i$ at the unit element $\one$.

\begin{lemma}
The curves $\exp s E_{i,\mu}\cdot \dot \gamma(t_i)$ are all great circles in $S_i^{m_i}$.
\end{lemma}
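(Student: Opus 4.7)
The plan is to invoke O'Neill's theorem on Riemannian submersions: a horizontal geodesic in the total space projects to a geodesic in the base. The excerpt has already recorded that $\pi_i\colon K_i \to S_i^{m_i}$, $k\mapsto k\cdot\dot\gamma(t_i)$, is, up to a global constant factor, a Riemannian submersion from $K_i$ (with the biinvariant metric inherited from $G$) to $S_i^{m_i}$ (with the round metric inherited from $\Sph^{n+1}$). Since a constant rescaling of the metric on the base does not alter which curves are geodesics, it suffices to show that each one-parameter subgroup $s\mapsto \exp(sE_{i,\mu})$ is a horizontal geodesic in $K_i$ and then to identify geodesics of the round sphere with great circles.

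First I would note that $s\mapsto \exp(sE_{i,\mu})$ is a geodesic in $K_i$ because for biinvariant metrics on compact Lie groups the Lie-theoretic and Riemannian exponentials coincide on one-parameter subgroups. This step uses only the general theory and is straightforward.

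Next I would verify horizontality along the whole curve, not merely at the unit element. The vertical distribution at a point $k\in K_i$ is the tangent space to the fiber $kH$, that is, $(L_k)_\ast \ah$. By biinvariance (in particular left-invariance of the metric), orthogonal complementation commutes with left translation, so the horizontal distribution at $k$ is $(L_k)_\ast \am_i$. The velocity of $s\mapsto \exp(sE_{i,\mu})$ at $\exp(sE_{i,\mu})$ is $(L_{\exp(sE_{i,\mu})})_\ast E_{i,\mu}$, which lies in this horizontal distribution because $E_{i,\mu}\in\am_i$. Hence the geodesic is horizontal for every $s$.

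Finally, O'Neill's theorem yields that the projected curve $s\mapsto \exp(sE_{i,\mu})\cdot\dot\gamma(t_i)$ is a geodesic in $S_i^{m_i}$, and since $S_i^{m_i}$ carries the round metric, its geodesics are exactly its great circles. No step looks like a real obstacle; the only subtlety worth highlighting is the left-invariance of the horizontal distribution, which is where biinvariance of the metric together with the reductive splitting $\ak_i = \ah\oplus\am_i$ is used, and this is precisely what allows the one-parameter subgroup in direction $E_{i,\mu}$ to remain horizontal rather than bending into the fiber.
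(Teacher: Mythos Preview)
Your proof is correct and uses essentially the same ingredients as the paper: the Riemannian submersion $K_i\to S_i^{m_i}$, the identification of geodesics through $\one$ in a biinvariant metric with one-parameter subgroups, and the correspondence between horizontal geodesics upstairs and geodesics downstairs. The only cosmetic difference is the direction of the argument: the paper lifts the great circles $\dot\gamma(t_i)\cos s + e_{i,\mu}\sin s$ horizontally through $\one$, observes that the lifts are geodesics and hence one-parameter subgroups with initial vectors $E_{i,\mu}$, whereas you start from the one-parameter subgroups, check horizontality via left-invariance, and project down.
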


\begin{proof}
The horizontal lifts of the great circles $\dot \gamma(t_i)\cos s + e_{i,\mu} \sin s$ through the unit element $\one$ of $K_i$ are horizontal geodesics in $K_i$ and hence $1$-parameter subgroups of $K_i$. The $E_{i,1}, \ldots, E_{i,m_i}$
are the initial vectors of these $1$-parameter subgroups.
\end{proof}

\begin{corollary}
\label{specialform}
As an element of $\aso(n+2)$ each $E_{i,\mu} \subset \am_i$ is of the form
\begin{gather*}
  E_{i,\mu} = \bmat 0 & 0 & v\tp \sin t_i \\ 0 & 0 & -v\tp \cos t_i \\ -v \sin t_i & v \cos t_i & Y \emat
\end{gather*}
with $Y v = 0$. In particular, $E_{i,\mu}$ commutes with its projection to
$\aso(n) = \bmat 0 & 0 & 0\\ 0 & 0 & 0 \\ 0 & 0 & \ast \emat$.
\end{corollary}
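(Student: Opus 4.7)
The plan is to exploit two pieces of information: the isotropy condition $E_{i,\mu}\cdot\gamma(t_i)=0$ (since $E_{i,\mu}\in\ak_i$) and the great circle condition from the preceding lemma, which gives $E_{i,\mu}^2\cdot\dot\gamma(t_i)=-\lambda^2\dot\gamma(t_i)$ for some $\lambda\ge 0$. Each piece produces constraints on the block form of $E_{i,\mu}$ viewed as an element of $\aso(n+2)$.

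First I would write an arbitrary $E\in\aso(n+2)$ in the block form
\[
  E = \bmat 0 & a & b\tp\\ -a & 0 & c\tp\\ -b & -c & Y\emat,
\]
where $Y\in\aso(n)$ and $a\in\R$, $b,c\in\R^n$. Plugging in $\gamma(t_i)=(\cos t_i,\sin t_i,0)\tp$, the isotropy condition $E\cdot\gamma(t_i)=0$ immediately gives $a=0$ and $b\cos t_i+c\sin t_i=0$. The linear relation between $b$ and $c$ means we can parametrize them by a single vector $v\in\R^n$ via $b=v\sin t_i$, $c=-v\cos t_i$, which is precisely the displayed block form.

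Next I would use $\dot\gamma(t_i)=(-\sin t_i,\cos t_i,0)\tp$ and compute, using the form just established, that $E_{i,\mu}\cdot\dot\gamma(t_i)=(0,0,v)\tp$ (the sine-cosine squares add to $1$), and
\[
  E_{i,\mu}^2\cdot\dot\gamma(t_i) = \bmat |v|^2\sin t_i\\ -|v|^2\cos t_i\\ Yv\emat.
\]
By the previous lemma the curve $s\mapsto\exp(sE_{i,\mu})\cdot\dot\gamma(t_i)$ is a great circle in $S_i^{m_i}$, so $E_{i,\mu}^2\cdot\dot\gamma(t_i)$ must be a scalar multiple of $\dot\gamma(t_i)$; comparing coordinates forces $Yv=0$ (and incidentally $|v|^2=\lambda^2$, which is harmless since the claim is only about the algebraic form of the matrix). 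This establishes the first assertion.

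For the commutativity claim, let $P=\pr_{\aso(n)}E_{i,\mu}$ be the matrix with only $Y$ in the lower-right block. Since $Y$ is skew and $Yv=0$, also $v\tp Y=-(Yv)\tp=0$. A direct block multiplication then shows that $E_{i,\mu}P$ and $PE_{i,\mu}$ both equal the matrix whose only nonzero block is $Y^2$ in the lower right, so $[E_{i,\mu},P]=0$. The only mild subtlety (not really an obstacle) is checking the edge case $t_i=0$, where $\sin t_i=0$ collapses the parametrization: there the isotropy condition gives $b=0$ directly and the form reduces consistently, so no separate argument is needed.
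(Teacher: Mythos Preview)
Your proof is correct and rests on the same two ingredients as the paper's: the isotropy condition $E_{i,\mu}\cdot\gamma(t_i)=0$ to pin down the block form, and the great-circle property of the preceding lemma to force $Yv=0$. The execution differs. The paper writes the one-parameter group $\exp sE_{i,\mu}$ explicitly as a conjugate of a block rotation---using that it fixes $\gamma(t_i)$ and rotates the plane spanned by $\dot\gamma(t_i)$ and $e_{i,\mu}$---and then differentiates at $s=0$; the relation $Yv=0$ falls out of the block structure of that conjugation. You instead stay at the Lie-algebra level throughout and extract $Yv=0$ from the second-order consequence $E_{i,\mu}^2\dot\gamma(t_i)\in\R\,\dot\gamma(t_i)$ of the great-circle condition. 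Your route is a little more economical; the paper's has the minor bonus of identifying $v$ concretely as the $\R^n$-projection of $e_{i,\mu}$ and confirming $|v|=1$, though neither point is needed for the corollary as stated.
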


\begin{proof}
We have
\begin{gather*}
  \exp sE_{i,\mu} \cdot \gamma(t_i) = \gamma(t_i), \\
  \exp sE_{i,\mu} \cdot \dot \gamma(t_i) = \dot \gamma(t_i) \cos s + e_{i,\mu} \sin s, \\
  \exp sE_{i,\mu} \cdot e_{i,\mu} = -\dot \gamma(t_i) \sin s + e_{i,\mu} \cos s.
\end{gather*}
Hence,
\begin{gather*}
  \exp sE_{i,\mu} = \bmat \cos t_i & -\sin t_i & 0 \\ \sin t_i & \cos t_i & 0 \\ 0 & 0 & A\emat
  \bmat 1 & 0 & 0 & 0 \\ 0 & \cos s & -\sin s & 0 \\ 0 & \sin s & \cos s & 0 \\ 0 & 0 & 0 & \ast \emat
  \bmat \cos t_i & \sin t_i & 0 \\ -\sin t_i & \cos t_i & 0 \\ 0 & 0 & A^{-1}\emat
\end{gather*}
where $A\in \SO(n)$ is any matrix whose first column is the projection of $e_{i,\mu}\subset S_i^{m_i} \subset \Sph^{n+1} \subset \R^{n+2}$ to the last $n$ components (the first two are $0$).  Differentiating with respect to $s$ and evaluating at $s = 0$ yields the claimed statement.
\end{proof}

We can now turn to the lifted $(g,m_0,m_1)$-action
\begin{gather*}
  G \times \SO(n+1) \times \SO(n+2) \to \SO(n+2), \quad (A,B) \cdot C = ACB^{-1}.
\end{gather*}
We endow $\SO(n+2)$ with the biinvariant metric $\tfrac{1}{2}\tr X\tp Y$ as before. The group $\tilde G = G \times \SO(n+1)$ is considered to be the Riemannian product of $G \subset \SO(n+2)$ with $\tfrac{1}{2}\tr X\tp Y$ and $\SO(n+1)$ with $\tfrac{1}{2}\tr X\tp Y$. Note that in this section $\tilde \am_i$ is defined to be the orthogonal complement of $\tilde \ah$ in $\tilde \ak_i$ whereas it was previously defined to be the quotient $\tilde \ak_i/\tilde \ah$.

\smallskip

Our goal is to construct an orthogonal decomposition
\begin{gather}
\label{abeliandecomp}
  \tilde \ag = \tilde \ah \oplus \tilde \ab_1 \oplus \tilde \ab_2 \oplus \ldots \oplus \tilde \ab_{n} \oplus \tilde \ah_{\pm} \oplus \tilde \aq
\end{gather}
of the Lie algebra of $\tilde G = G \times \SO(n+1)$ such that for each regular time $t \not \in \frac{\pi}{g}\Z$ the sum
\begin{gather}
\label{abelianstardecomp}
   \tilde \ab^{\ast}_{1\vert\tilde\gamma(t)} \oplus \tilde \ab^{\ast}_{2\vert\tilde\gamma(t)} \oplus \ldots \oplus \tilde \ab^{\ast}_{n\vert\tilde\gamma(t)} \oplus \tilde \ah_{\pm\vert\tilde\gamma(t)}^{\ast} \oplus \tilde \aq^{\ast}_{\vert\tilde\gamma(t)}
\end{gather}
is orthogonal, each $\tilde \ab_{\mu}$ is a $\tilde P_t$-invariant abelian subalgebra of $\tilde \ag$, and $\ah_{\pm}^{\ast}$ and $\tilde \aq$ are eigenspaces of $\tilde P_{t}$. Theorem\,\ref{tanpart} will then imply that the tangential components of the tension fields of all $(k,r)$-maps vanish.

\smallskip

We use the canonical identification $H \to \tilde H$, $h \mapsto (h,h)$ and introduce several $H$-equivariant maps.
First, for any $i \in \{0,\ldots, g-1\}$ the map
\begin{gather*}
  \iota: \am_i \to \tilde\am_i, \quad X \mapsto \bigl(X, \tilde\gamma(t_i)^{-1}X\gamma(t_i) \bigr),
\end{gather*}
where $t_i = i\frac{\pi}{g}$, is $H$-equivariant since all elements of $H$ commute with $\tilde\gamma(t)$ for all $t\in \R$.  For the same reason the maps
\begin{gather*}
  \sigma : \tilde\am_i \to \tilde\am_{i+g}, \quad (X, \hat X) \mapsto \Bigl(X, \bmat -\one_2 & 0 \\ 0 & \one_{n} \emat \hat X \bmat -\one_2 & 0 \\ 0 & \one_{n} \emat \Bigr)
\end{gather*}
and
\begin{gather*}
  \pi : \tilde\am_i \to \{0\} \times \aso(n), \quad (X,\hat X) \mapsto \Bigl( 0, \tfrac{1}{2} \bigl(\hat X + \bmat -\one_2 & 0 \\ 0 & \one_{n} \emat \hat X \bmat -\one_2 & 0 \\ 0 & \one_{n} \emat \bigr)\Bigr).
\end{gather*}
are $H$-equivariant. Note that $\pi$ is just the projection of $X$ to its $\aso(n)$-part blown up a little for formal reasons. We denote the image of $\pi$ by $\tilde \ap_i$.
Obviously, we have $\pi\circ \sigma = \pi$, i.e., the projections of $\tilde\am_i$ and $\tilde\am_{i+g}$ are the same.
{\em In the following we assume that each $\tilde \ap_i$ contains a non-zero element and is thus isomorphic to $\tilde \am_i$ by Schur's lemma.} It is obvious how to modify the following arguments if some of the $\tilde \ap_i = \{0\}$.

Next, we set $\tilde\ah_{\pm} = \{(X,-X) \;\vert\; X \in \ah\}$. Note that for $X\in\ah$,
\begin{gather*}
  (X,-X) - (X,X) = (0,-2X).
\end{gather*}
Hence, $(X,-X)^{\ast}_{\vert\tilde\gamma(t)} = (0,-2X)^{\ast}_{\vert\tilde\gamma(t)}$ and
$\tilde \ah^{\ast}_{\pm\vert\tilde\gamma(t)} = (\{0\} \times \ah)^{\ast}_{\vert\tilde\gamma(t)}$.

Finally, we define the space $\tilde \aq$ to be the orthogonal component of
\begin{gather*}
  \tilde \ap_0 \oplus \ldots \oplus\tilde \ap_{g-1}\oplus \{0\}\times\ah
\end{gather*}
in $\{0\} \times \aso(n)$. 

\begin{lemma}
For regular times $t\not\in \frac{\pi}{g}\Z$, the decomposition
\begin{gather*}
  \tilde\ag^{\ast}_{\vert\tilde\gamma(t)} = \tilde \am^{\ast}_{0\vert\tilde\gamma(t)} \oplus \ldots \tilde \am^{\ast}_{2g-1\vert\tilde\gamma(t)} \oplus \tilde\ap^{\ast}_{0\vert\tilde\gamma(t)}\oplus \ldots \tilde \ap^{\ast}_{g-1\vert\tilde\gamma(t)}
  \oplus \tilde \ah^{\ast}_{\pm\vert\tilde\gamma(t)} \oplus \tilde \aq^{\ast}_{\vert\tilde\gamma(t)}
\end{gather*}
is orthogonal.
\end{lemma}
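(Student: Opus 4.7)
The plan is to split the proposed decomposition into two groups according to Lemma~\ref{normalJacobi}. The subspaces $\tilde\am^{\ast}_{i\vert\tilde\gamma(t)}$ for $i=0,\ldots,2g-1$ all lie in the parallel distribution $V_{\vert\tilde\gamma(t)}$ (the $\tfrac14$-eigenspace of the normal Jacobi operator along $\tilde\gamma$), whereas $\tilde\ap^{\ast}_{i\vert\tilde\gamma(t)}$, $\tilde\ah^{\ast}_{\pm\vert\tilde\gamma(t)}$ and $\tilde\aq^{\ast}_{\vert\tilde\gamma(t)}$ all lie in the parallel distribution $\aso(n)^{\ast}_{\vert\tilde\gamma(t)}$ (the $0$-eigenspace). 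Since eigenspaces of a self-adjoint operator for distinct eigenvalues are orthogonal, every piece of the first group is automatically orthogonal at $\tilde\gamma(t)$ to every piece of the second group, and it remains only to check orthogonality within each group.

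Within the first group, $V_{\vert\tilde\gamma(t)} = \bigoplus_{i=0}^{2g-1} \tilde\am^{\ast}_{i\vert\tilde\gamma(t)}$ is an orthogonal decomposition by Corollary~\ref{orthogonalDecomp}. For the second group, I would first note that the map $(0,X)\mapsto (0,X)^{\ast}_{\vert\tilde\gamma(t)} = -\tilde\gamma(t)\cdot X$ is, up to sign, an isometry from $\{0\}\times\aso(n+1)$ equipped with the biinvariant form onto its image, since left-translation on $\SO(n+2)$ preserves the biinvariant metric. In addition, the identity $(X,-X)^{\ast}_{\vert\tilde\gamma(t)} = (0,-2X)^{\ast}_{\vert\tilde\gamma(t)}$ for $X\in\ah$, recorded just before the statement of the lemma (it uses that $\ah$ commutes pointwise with $\tilde\gamma$), allows me to replace $\tilde\ah^{\ast}_{\pm\vert\tilde\gamma(t)}$ by $(\{0\}\times\ah)^{\ast}_{\vert\tilde\gamma(t)}$ for the purpose of computing inner products. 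Hence the problem reduces to verifying that
\[
\{0\}\times\aso(n) = \tilde\ap_0\oplus\cdots\oplus\tilde\ap_{g-1}\oplus(\{0\}\times\ah)\oplus\tilde\aq
\]
is orthogonal with respect to the biinvariant form on $\aso(n+1)$.

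The subspace $\tilde\aq$ is by definition the biinvariant-orthogonal complement of the other summands, so only the mutual orthogonality of $\tilde\ap_0,\ldots,\tilde\ap_{g-1}$ and $\{0\}\times\ah$ has to be argued. Whenever $\tilde\ap_i\neq 0$, the composition $\pi\circ\iota\colon \am_i\to \tilde\ap_i$ is a surjective $H$-equivariant map between irreducible $H$-modules, hence an isomorphism by Schur's lemma, so $\tilde\ap_i$ is $H$-equivalent to the irreducible module $\am_i$. By the hypothesis of Theorem~\ref{Schur} the modules $\am_0,\ldots,\am_{g-1}$ are pairwise inequivalent and none of them appears as an irreducible summand of $\ah$; Schur's lemma applied to the $H$-invariant biinvariant form then immediately yields $\tilde\ap_i\perp\tilde\ap_j$ for $i\neq j$ as well as $\tilde\ap_i\perp\{0\}\times\ah$. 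The main subtle point is precisely the isometric identification transporting the orthogonality problem from $T_{\tilde\gamma(t)}\SO(n+2)$ to $\aso(n+1)$; once this is set up, the rest follows immediately from Lemma~\ref{normalJacobi} together with Schur's lemma.
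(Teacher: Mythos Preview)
Your proof is correct and follows essentially the same route as the paper: the orthogonal splitting $V_{\vert\tilde\gamma(t)}\oplus\aso(n)^{\ast}_{\vert\tilde\gamma(t)}$ from Section~\ref{groups} (via Lemma~\ref{normalJacobi} and Corollary~\ref{orthogonalDecomp}) handles the first group, and the isometry $\aso(n)\to\aso(n)^{\ast}_{\vert\tilde\gamma(t)}$ together with Schur's lemma handles the second. Your argument is in fact slightly more explicit than the paper's, since you spell out the mutual orthogonality of the $\tilde\ap^{\ast}_{i\vert\tilde\gamma(t)}$ via the inequivalence of the $\am_i$, a point the paper leaves implicit.
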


\begin{proof}
The orthogonality of the decomposition
\begin{gather*}
  \tilde\ag^{\ast}_{\vert\tilde\gamma(t)} = \tilde \am^{\ast}_{0\vert\tilde\gamma(t)} \oplus \ldots \oplus \tilde \am^{\ast}_{2g-1\vert\tilde\gamma(t)} \oplus \aso(n)^{\ast}_{\vert\tilde\gamma(t)}
\end{gather*}
was established in Section\,\ref{groups}. Since the $H$-module $\ah$ does not contain any $H$-submodules equivalent to any of the $\am_i$, the $H$-module $\tilde \ah^{\ast}_{\pm\vert\tilde\gamma(t)}$ is perpendicular to
\begin{gather*}
  \tilde \am^{\ast}_{0\vert\tilde\gamma(t)} \oplus \ldots \oplus \tilde \am^{\ast}_{2g-1\vert\tilde\gamma(t)} \oplus \tilde\ap^{\ast}_{0\vert\tilde\gamma(t)}\oplus \ldots \tilde \ap^{\ast}_{g-1\vert\tilde\gamma(t)}.
\end{gather*}
Finally, $\tilde\aq^{\ast}_{\vert\tilde\gamma(t)}$ is perpendicular to all other summands by construction, since $\aso(n) \to \aso(n)^{\ast}_{\vert\gamma(t)}$ is an isometry for all times $t \in \R$.
\end{proof}

Now consider the elements
\begin{gather*}
  \tilde E_{i,\mu} = \iota (E_{i,\mu}) \in \tilde \am_i, \quad
  \sigma(\tilde E_{i,\mu}) \in \tilde \am_{i+g}, \quad \text{and}\quad
  \pi(\tilde E_{i,\mu}) \in \tilde \ap_i.
\end{gather*}
for all $i \in \{0,\ldots, g-1\}$ and $\mu \in \{1,\ldots,m_i\}$.

\begin{lemma}
The set of all $\tilde E_{i,\mu\vert\tilde\gamma(t)}^{\ast}$, $\sigma(\tilde E_{i,\mu\vert\tilde\gamma(t)})^{\ast}$, and $\pi(\tilde E_{i,\mu\vert\tilde\gamma(t)})^{\ast}$ is an orthogonal basis of
\begin{gather*}
  \tilde \am^{\ast}_{0\vert\tilde\gamma(t)} \oplus \ldots \tilde \am^{\ast}_{2g-1\vert\tilde\gamma(t)} \oplus \tilde\ap^{\ast}_{0\vert\tilde\gamma(t)}\oplus \ldots \tilde \ap^{\ast}_{g-1\vert\tilde\gamma(t)}
\end{gather*}
for any regular time $t \not \in\frac{\pi}{g}\Z$.
\end{lemma}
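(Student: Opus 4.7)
The plan is to separately verify that the listed vectors lie in the correct summands and exhaust each one, and then that they are mutually orthogonal at $\tilde\gamma(t)$. For orthogonality across different summands I would simply invoke the preceding lemma, so the real content is in producing bases of the individual summands $\tilde\am^{\ast}_i$, $\tilde\am^{\ast}_{i+g}$, $\tilde\ap^{\ast}_i$ and in checking intra-summand orthogonality. Both halves are driven by Schur's lemma, applied first to the $H$-equivariant maps $\iota$, $\sigma$, $\pi$, and second to the $H$-equivariant symmetric endomorphism $\tilde P_t$.

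For the bases, each $\tilde E_{i,\mu}=\iota(E_{i,\mu})$ lies in $\tilde\am_i$, each $\sigma(\tilde E_{i,\mu})$ in $\tilde\am_{i+g}$, and each $\pi(\tilde E_{i,\mu})$ in $\tilde\ap_i$ by the definitions of the three $H$-equivariant maps. The map $\iota\colon\am_i\to\tilde\am_i$ is a linear isomorphism by construction, and $\sigma\colon\tilde\am_i\to\tilde\am_{i+g}$ is a linear isomorphism since it is conjugation by an orthogonal involution. Under the standing assumption $\tilde\ap_i\neq\{0\}$ the $H$-equivariant map $\pi\vert_{\tilde\am_i}\colon\tilde\am_i\to\tilde\ap_i$ is non-zero, and since its source is irreducible as an $H$-module (by hypothesis of Theorem\,\ref{Schur}), Schur's lemma forces $\pi\vert_{\tilde\am_i}$ to be an isomorphism. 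Counting dimensions on both sides as $3\sum_{i=0}^{g-1}m_i$ confirms that the listed vectors exhaust the direct sum.

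For the intra-summand orthogonality at $\tilde\gamma(t)$, I would again appeal to Schur's lemma: $\tilde P_t$ is $H$-equivariant and $Q$-symmetric, so on each of the irreducible $H$-modules $\tilde\am_i$, $\tilde\am_{i+g}$, $\tilde\ap_i$ it must act as a positive scalar multiple of the identity. Therefore $Q$-orthogonality of any basis of one of these Lie algebra summands transfers to orthogonality of the corresponding action fields at $\tilde\gamma(t)$. It remains to verify that the three families $\{\iota(E_{i,\mu})\}$, $\{\sigma(\iota(E_{i,\mu}))\}$, $\{\pi(\iota(E_{i,\mu}))\}$ are $Q$-orthogonal within their respective summands: this is immediate for $\pi$ (orthogonal projection onto a $Q$-invariant summand of $\tilde\ag$), clear for $\sigma$ (conjugation by an orthogonal involution preserves the Frobenius product), and for $\iota$ it follows from the conjugation-invariance of the trace together with $\tilde\gamma(t_i)\in\SO(n+2)$, since the original vectors $E_{i,\mu}$ are $Q$-orthogonal in $\ag$.

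The main obstacle I anticipate is the dichotomy that $\pi\vert_{\tilde\am_i}$ is either zero or a bijection onto $\tilde\ap_i$. This is where the irreducibility hypothesis of Theorem\,\ref{Schur} is essential; without it the image of $\pi$ could be a proper non-trivial $H$-submodule and the dimension count would collapse. The case $\tilde\ap_i=\{0\}$ is handled by the standing assumption in the text, with the obvious modification that the corresponding $\pi(\tilde E_{i,\mu})$ and the summand $\tilde\ap^{\ast}_{i\vert\tilde\gamma(t)}$ are omitted from both sides.
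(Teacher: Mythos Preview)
Your overall strategy coincides with the paper's: both arguments rest on Schur's lemma applied to the $H$-equivariant maps $\iota$, $\sigma$, $\pi$ and the action-field map, using irreducibility of each $\am_i$. The paper's proof is simply that each of these $H$-equivariant isomorphisms between irreducible $H$-modules identifies the two inner products up to a scalar factor and hence preserves perpendicularity; starting from the orthonormal $e_{i,\mu}$, orthogonality then propagates through the whole chain.

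There is, however, a gap in your intra-summand step. You assert that $\tilde P_t$, being $H$-equivariant and $Q$-symmetric, ``must act as a positive scalar multiple of the identity'' on each of $\tilde\am_i$, $\tilde\am_{i+g}$, $\tilde\ap_i$. Schur's lemma would give this only if $\tilde P_t$ preserved each of these subspaces, which you have not shown --- and in fact cannot conclude from equivariance alone, since the three modules are mutually \emph{equivalent} as $H$-modules and the Lie-algebra decomposition is \emph{not} $Q$-orthogonal (the paper remarks on this immediately after the lemma). An $H$-equivariant symmetric endomorphism of a sum of equivalent irreducibles is free to mix them. The repair is to drop $\tilde P_t$ and argue directly with the pulled-back bilinear form: $(X,Y)\mapsto\langle X^{\ast},Y^{\ast}\rangle_{\vert\tilde\gamma(t)}$ restricted to the irreducible module $\tilde\am_i$ is $H$-invariant, hence by Schur proportional to $Q\vert_{\tilde\am_i}$, so $Q$-orthogonality in $\tilde\am_i$ passes to orthogonality of action fields. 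This is precisely the paper's claim that the action-field map $\tilde\am_i\to\tilde\am^{\ast}_{i\vert\tilde\gamma(t)}$ preserves perpendicularity. Once you phrase it this way, the same argument handles $\iota$, $\sigma$, $\pi$ uniformly, and your separate verifications that these maps send $Q$-orthogonal families to $Q$-orthogonal families become unnecessary.
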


\begin{proof}
Each $\am_i$ is irreducible. Therefore, the $H$-equivariant isomorphisms $\iota: \am_i \to \tilde \am_i$, $\sigma: \tilde \am_i\to \tilde \am_{i+g}$, $\pi:\tilde \am_i \to \ap_i$, $\tilde \am_i \to \tilde\am_{i\vert\tilde\gamma(t)}^{\ast}$, and $\tilde \ap_i \to \tilde \ap_{i\vert\tilde\gamma(t)}^{\ast}$ identify the inner products on all these spaces up to scalar factors and hence preserve perpendicularity.
\end{proof}

Note that the decomposition
\begin{alignat}{6}
  \tilde \ag &= \tilde \ah &&\oplus \tilde \am_0 &&\oplus \tilde \am_1 &&\oplus \ldots &&\oplus \tilde \am_{g-1} && \notag \\
  & &&\oplus \tilde \am_g &&\oplus \tilde \am_{g+1} &&\oplus \ldots &&\oplus \tilde \am_{2g-1} && \label{groupdecomp}
\\
  & &&\oplus \tilde \ap_0 &&\oplus \tilde \ap_1 &&\oplus \ldots &&\oplus \tilde \ap_{g-1} &&\oplus \tilde \ah_{\pm} \oplus \tilde \aq \notag
\end{alignat}
itself is not necessarily orthogonal.

\begin{lemma}
The columns in the decomposition $(\ref{groupdecomp})$ are mutually perpendicular.
\label{columndecomp}
\end{lemma}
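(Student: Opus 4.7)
The plan is to exploit the $\Ad_H$-invariance of the biinvariant metric on $\tilde\ag$ together with Schur's lemma, in the same spirit as the argument that made the orthogonality of the summands in (\ref{spheredecomp}) automatic on the sphere side. Since $H$ is embedded diagonally via $h\mapsto (h,h)$ in $\tilde H\subset\tilde G$, the metric $\tilde Q=\tfrac{1}{2}\tr X\tp Y$ on $\tilde\ag$ is $\Ad_H$-invariant; hence its restriction to the pairing between any two summands of (\ref{groupdecomp}) is an $H$-equivariant bilinear form. The first step is to identify each summand as an $H$-module. For $i\in\{0,\ldots,g-1\}$ the maps $\iota$, $\sigma\circ\iota$ and $\pi\circ\iota$ are $H$-equivariant isomorphisms from $\am_i$ onto $\tilde\am_i$, $\tilde\am_{i+g}$ and $\tilde\ap_i$, respectively, because $H$ commutes with $\tilde\gamma(t)$ for all $t$. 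Moreover, $\tilde\ah$ and $\tilde\ah_{\pm}$ are each equivalent, as $H$-modules, to $\ah$ with its adjoint action.

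Under the standing hypothesis of Theorem\,\ref{Schur}, the $\am_i$ are mutually inequivalent irreducible $H$-modules, and $\ah$ contains no irreducible submodule equivalent to any $\am_i$. Schur's lemma then forces the $\Ad_H$-invariant pairing $\tilde Q$ to vanish on any two summands lying in different ``generic'' columns (those indexed by $i\in\{0,\ldots,g-1\}$), as well as on any pair consisting of a generic column and either $\tilde\ah$ or $\tilde\ah_{\pm}$. This disposes of all perpendicularity statements except those involving $\tilde\aq$ and the pair $(\tilde\ah,\tilde\ah_{\pm})$.

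The remaining cases I would handle by short direct computations. By the very construction of $\tilde\aq$ one immediately obtains $\tilde\aq\perp\tilde\ap_j$ and $\tilde\aq\perp\{0\}\times\ah$, from which $\tilde\aq\perp\tilde\ah_{\pm}$ and $\tilde\aq\perp\tilde\ah$ follow after matching second components. For $\tilde\aq\perp\tilde\am_j$ (and analogously $\tilde\am_{j+g}$), I would invoke the explicit form of $E_{j,\mu}$ provided by Corollary\,\ref{specialform}: a direct block calculation shows that the $\aso(n)$-component of $\tilde\gamma(t_j)^{-1}E_{j,\mu}\tilde\gamma(t_j)$ coincides with the $\aso(n)$-part of $E_{j,\mu}$ itself, and so lies in $\ap_j$. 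The pairing of any $(0,Z)\in\tilde\aq$ against $\iota(E_{j,\mu})$ then reduces to the pairing of $Z\in\aq$ against an element of $\ap_j$, which vanishes by the definition of $\aq$. Finally, $\tilde\ah\perp\tilde\ah_{\pm}$ is immediate from $\langle(X,X),(Y,-Y)\rangle=\langle X,Y\rangle-\langle X,Y\rangle=0$.

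I expect the single nontrivial calculational step to be the block identification behind $\tilde\aq\perp\tilde\am_j$; everything else is killed either by Schur's lemma or by the definition of $\tilde\aq$. The main conceptual point is the bookkeeping that each generic column consists of three $H$-submodules all equivalent to the same $\am_i$, so that distinct columns see distinct irreducibles and Schur's lemma cuts all remaining cross-pairings at once.
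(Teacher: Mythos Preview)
Your proposal is correct and takes essentially the same approach as the paper: Schur's lemma handles the generic columns and their orthogonality to $\tilde\ah$ and $\tilde\ah_{\pm}$, while the definition of $\tilde\aq$ (together with the observation that pairing $\tilde\aq$ against $\tilde\am_j$ factors through $\tilde\ap_j$) handles the rest. You are in fact more explicit than the paper, which dispatches $\tilde\aq$ with the single phrase ``perpendicular to all of the other spaces by its definition'' and does not spell out $\tilde\ah\perp\tilde\ah_{\pm}$; your block computation via Corollary\,\ref{specialform} is precisely the content behind that phrase.
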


\begin{proof}
For each $i\in \{0,\ldots,g-1\}$ the summands $\tilde \am_i$, $\tilde \am_{i+g}$ and $\tilde \ap_i$ in the decomposition (\ref{groupdecomp}) are equivalent $H$-modules. They are inequivalent to any other $\tilde \am_j$, $\tilde \am_{j+g}$, $\tilde \ap_j$. Since $\ah$ does not contain any $H$-invariant subspaces equivalent to one of the $\am_i$, the space $\tilde \ah_{\pm}$ is perpendicular to the $\tilde \am_i$, $\tilde \am_{i+g}$ and $\tilde \ap_i$. Finally, $\tilde \aq$ is perpendicular to all of the other spaces by its definition.
\end{proof}

Now set $\tilde E_{i,\mu} = \iota (E_{i,\mu})$ and
\begin{gather*}
  \tilde \ab_{i,\mu} = \spann\{ \tilde E_{i,\mu}, \sigma(\tilde E_{i,\mu}), \pi(\tilde E_{i,\mu}) \}.
\end{gather*}

\begin{lemma}
The sum $\bigoplus_{i=0}^{g-1} \bigoplus_{\mu=1}^{m_i} \tilde\ab_{i,\mu}$ is orthogonal.
\end{lemma}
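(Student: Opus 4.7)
The plan is to verify $\tilde\ab_{i,\mu}\perp \tilde\ab_{j,\nu}$ for every pair of distinct indices, treating the two cases $i\ne j$ and $i=j,\,\mu\ne\nu$ separately.

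For $i\ne j$, the three spanning vectors of $\tilde\ab_{i,\mu}$ lie in $\tilde\am_i$, $\tilde\am_{i+g}$, and $\tilde\ap_i$, i.e., entirely in the $i$-th column of the decomposition~(\ref{groupdecomp}); likewise $\tilde\ab_{j,\nu}$ lies in the $j$-th column. Lemma~\ref{columndecomp} then delivers the orthogonality immediately.

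For $i=j$ and $\mu\ne\nu$ I would check each of the nine inner products $\langle A(E_{i,\mu}), A'(E_{i,\nu})\rangle_{\tilde\ag}$ with $A, A' \in \{\iota,\,\sigma\circ\iota,\,\pi\circ\iota\}$. For fixed $A, A'$ this is a bilinear form $B_{A,A'}(X,Y)$ on $\am_i$. The strategy is to show that each $B_{A,A'}$ is symmetric and $\Ad_H$-invariant. Schur's lemma then forces $B_{A,A'}$ to be a scalar multiple of the biinvariant inner product restricted to $\am_i$, and since the $E_{i,\mu}$ are horizontal lifts at $\one$ of an orthonormal basis of $T_{\dot\gamma(t_i)}S_i^{m_i}$ under the (up-to-scalar) Riemannian submersion $K_i\to S_i^{m_i}$, they are pairwise $\ag$-orthogonal in $\am_i$, so all nine inner products vanish.

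The $\Ad_H$-invariance of $B_{A,A'}$ follows from the $H$-equivariance of the maps $\iota$, $\sigma\circ\iota$, and $\pi\circ\iota$, which in turn rests on the fact that every $h\in H$ commutes both with $\tilde\gamma(t_i)$ and with $I=\bmat -\one_2&0\\0&\one_n\emat$. Symmetry is immediate for the $\iota\times\iota$ entry; for the remaining eight entries the key identity is $\tilde\gamma(t_i)\,I\,\tilde\gamma(t_i)^{-1}=I$ (a direct $2\times 2$ block computation using $R(t_i)(-\one_2)R(t_i)^{-1}=-\one_2$), which reduces every cross-term on the $\aso(n+1)$-factor to an expression of the form $\langle X, I Y I\rangle_\ag$, and such a form is symmetric since $I^{-1}=I$ and conjugation by $I$ is an orthogonal transformation of $\aso(n+2)$.

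The main obstacle is the Schur step. For an irreducible $H$-module $\am_i$ one must verify that the space of $\Ad_H$-invariant symmetric bilinear forms is one-dimensional. This uses that $\operatorname{End}_H(\am_i)$ is a real division algebra ($\R$, $\C$, or $\H$) and that any nontrivial element (a complex or quaternionic structure) must be skew-adjoint with respect to any $H$-invariant inner product—because its adjoint again lies in $\operatorname{End}_H(\am_i)$ and squares to $+\id$—so only real scalars remain as self-adjoint $H$-equivariant endomorphisms, giving the desired one-dimensionality.
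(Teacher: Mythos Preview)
Your argument is correct, and the case $i\ne j$ matches the paper exactly. For $i=j$, $\mu\ne\nu$, however, you take a genuinely different route. The paper invokes the explicit matrix form of $E_{i,\mu}$ from Corollary~\ref{specialform}: writing $E_{i,\mu}$ and $E_{i,\nu}$ in terms of $(v,Y)$ and $(w,Z)$, it first uses Schur (``$\pi$ preserves perpendicularity'') to get $Y\perp Z$, then combines this with $E_{i,\mu}\perp E_{i,\nu}$ to extract $v\perp w$, and finally checks the nine inner products by hand from the displayed block formulas for $\tilde E_{i,\mu}$, $\sigma(\tilde E_{i,\mu})$, $\pi(\tilde E_{i,\mu})$. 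Your approach is more uniform and more abstract: you treat each of the nine pairings $B_{A,A'}$ as an $H$-invariant symmetric bilinear form on the irreducible module $\am_i$ and let Schur do all the work at once, never touching Corollary~\ref{specialform}. This is cleaner and would generalize more readily; the price is the extra care you (rightly) take to justify that the space of $H$-invariant \emph{symmetric} forms is one-dimensional even when $\operatorname{End}_H(\am_i)\cong\C$ or $\H$. One small remark: the identity $\tilde\gamma(t_i)\,I\,\tilde\gamma(t_i)^{-1}=I$ that you highlight is convenient for the reduction you describe, but symmetry of $\langle \hat X, I\hat Y I\rangle$ already follows just from the fact that conjugation by $I$ is an isometry; the commutation of $h\in H$ with $\tilde\gamma(t_i)$ and with $I$ is what you genuinely need, and it is for the $H$-equivariance of $\iota$, $\sigma$, $\pi$, as you say.
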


\begin{proof}
For any $i\neq j$, any $\tilde \ab_{i,\mu}$ is perpendicular to any $\tilde \ab_{j,\nu}$ by Lemma\,\ref{columndecomp}. We now consider the case $i = j$ and $\mu \neq \nu$.
By Corollary\,\ref{specialform}, we have
\begin{gather*}
  E_{i,\mu} = \bmat 0 & 0 & v\tp \sin t_i \\ 0 & 0 & -v\tp \cos t_i \\ -v \sin t_i & v \cos t_i & Y \emat
  \quad \text{and} \quad
  E_{i,\nu} = \bmat 0 & 0 & w\tp \sin t_i \\ 0 & 0 & -w\tp \cos t_i \\ -w \sin t_i & w \cos t_i & Z \emat
\end{gather*}
for some $Y$ and $v$ with $Yv = 0$ and some $Z$ and $w$ with $Zw = 0$. Now, we have
\begin{gather*}
  \tilde E_{i,\mu} = \bigl( E_{i,\mu}, \bmat 0 & 0 & 0 \\ 0 & 0 & -v\tp \\ 0 & v & Y \emat \bigr), \quad 
  \sigma(\tilde E_{i,\mu}) = \bigl( E_{i,\mu}, \bmat 0 & 0 & 0 \\ 0 & 0 & v\tp \\ 0 & -v & Y \emat\bigr), \quad
  \pi(\tilde E_{i,\mu}) = \bigl( 0, \bmat 0 & 0 & 0 \\ 0 & 0 & 0\\ 0 & 0 & Y \emat \bigr)
\end{gather*}
and
\begin{gather*}
  \tilde E_{i,\nu} = \bigl( E_{i,\nu}, \bmat 0 & 0 & 0 \\ 0 & 0 & -w\tp \\ 0 & w & Z \emat \bigr), \quad 
  \sigma(\tilde E_{i,\nu}) = \bigl( E_{i,\nu}, \bmat 0 & 0 & 0 \\ 0 & 0 & w\tp \\ 0 & -w & Z \emat\bigr), \quad
  \pi(\tilde E_{i,\nu}) = \bigl( 0, \bmat 0 & 0 & 0 \\ 0 & 0 & 0\\ 0 & 0 & Z \emat \bigr).
\end{gather*}
As noted earlier, the $H$-equivariant isomorphisms $\iota$, $\sigma$, and $\pi$ preserve perpendicularity.
Hence, $Y$ and $Z$ are perpendicular. But then $v$ and $w$ are perpendicular as well. It is now immediate from the formulas above that $\tilde \ab_{i,\mu}$ and $\tilde \ab_{i,\nu}$ are perpendicular.
\end{proof}

\begin{lemma}
Each $\tilde \ab_{i,\mu}$ is an abelian subalgebra of $\tilde \ag$.
\end{lemma}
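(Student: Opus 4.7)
Since the Lie bracket on $\tilde\ag = \ag \oplus \aso(n+1)$ is taken componentwise, we examine the two components separately. In the first component, $\tilde E_{i,\mu}$ and $\sigma(\tilde E_{i,\mu})$ both project to $E_{i,\mu}\in\ag$, while $\pi(\tilde E_{i,\mu})$ projects to $0$. Therefore all pairwise brackets of the three generators of $\tilde\ab_{i,\mu}$ vanish in the first component trivially.

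In the second component, denote by $\hat E_1$, $\hat E_2$, and $\tfrac12(\hat E_1+\hat E_2)$ the $\aso(n+1)$-parts of $\tilde E_{i,\mu}$, $\sigma(\tilde E_{i,\mu})$, and $\pi(\tilde E_{i,\mu})$, as in the explicit formulas established in the previous lemma. By bilinearity and skew-symmetry of the bracket, the three nontrivial pairwise brackets
\begin{gather*}
  [\hat E_1,\hat E_2],\quad
  [\hat E_1,\tfrac12(\hat E_1+\hat E_2)],\quad
  [\hat E_2,\tfrac12(\hat E_1+\hat E_2)]
\end{gather*}
all reduce to the single bracket $[\hat E_1,\hat E_2]$. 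Hence it suffices to verify that $[\hat E_1,\hat E_2]=0$.

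For this, I would simply plug in the explicit forms
\begin{gather*}
  \hat E_1 = \bmat 0 & 0 & 0 \\ 0 & 0 & -v\tp \\ 0 & v & Y \emat,\qquad
  \hat E_2 = \bmat 0 & 0 & 0 \\ 0 & 0 & \m v\tp \\ 0 & -v & Y \emat
\end{gather*}
and compute the two matrix products. A straightforward calculation in the bottom-right $(n+1)\times(n+1)$ block gives
\begin{gather*}
  \hat E_1 \hat E_2 - \hat E_2 \hat E_1
  = \bmat 0 & 0 & 0 \\ 0 & 0 & -2v\tp Y \\ 0 & -2Yv & 0\emat,
\end{gather*}
the diagonal terms $v\tp v$ and $vv\tp+Y^2$ canceling automatically. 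By Corollary~\ref{specialform} we have $Yv=0$ (and then also $v\tp Y=-(Yv)\tp = 0$ by skew-symmetry of $Y$), so this commutator vanishes, completing the proof. The only nonroutine ingredient is precisely the constraint $Yv=0$ from Corollary~\ref{specialform}, which encodes the fact that $\exp sE_{i,\mu}\cdot\dot\gamma(t_i)$ is a great circle; without it the off-diagonal terms would obstruct commutativity.
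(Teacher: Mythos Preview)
Your proof is correct and follows essentially the same approach as the paper, which simply cites Corollary~\ref{specialform} as making the claim immediate. You have spelled out the details the paper leaves implicit: the componentwise vanishing in the $\ag$-factor is trivial, and in the $\aso(n+1)$-factor everything reduces to $[\hat E_1,\hat E_2]=0$, which follows from the relation $Yv=0$ established in Corollary~\ref{specialform}.
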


\begin{proof}
This is an immediate consequence of Corollary\,\ref{specialform}.
\end{proof}

\begin{lemma}
For any regular time $t$, each $\tilde\ab_{i,\mu}$ is $\tilde P_{t}$-invariant and $\tilde \ah_{\pm}$ and $\tilde \aq$ are eigenspaces of $\tilde P_t$.
\end{lemma}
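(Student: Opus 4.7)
I would split the statement into three independent pieces (invariance of each $\tilde\ab_{i,\mu}$, and the eigenspace claim for $\tilde\ah_\pm$ and for $\tilde\aq$) and use two different techniques: direct computation for the eigenspace claims, and Schur's Lemma for the invariance claim.

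For the eigenspace claims, I would exploit the fact that every $X \in \ah$, embedded as $\bmat 0 & 0 \\ 0 & X_0 \emat$ in $\aso(n+2)$, commutes with $\tilde\gamma(t)$ for every $t$, because $\tilde\gamma(t)$ is the identity on the last $n$ coordinates. The action field formula for the lifted action is $(X,Y)^{\ast}_{\vert C} = XC - CY$. Evaluating at $C = \tilde\gamma(t)$: for $(X,-X) \in \tilde\ah_\pm$ the action field is $X\tilde\gamma(t) + \tilde\gamma(t)X = 2\tilde\gamma(t) X$; by left-invariance of $\tfrac{1}{2}\tr X^{\mathrm t} Y$ its squared norm is $2\tr X^{\mathrm t}X = 2\,Q((X,-X),(X,-X))$, so $\tilde P_t$ acts as the scalar $2$ on $\tilde\ah_\pm$, independently of $t$. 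A completely analogous computation for $(0,X)$ with $X \in \aso(n+1)$ shows that the action field $-\tilde\gamma(t) X$ has squared norm $\tfrac{1}{2}\tr X^{\mathrm t} X = Q((0,X),(0,X))$, so $\tilde P_t = \id$ on all of $\{0\}\times\aso(n+1)$ and in particular on $\tilde\aq$.

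For the $\tilde P_t$-invariance of each $\tilde\ab_{i,\mu}$ I would use the $\Ad_{\tilde H}$-equivariance of $\tilde P_t$ combined with Schur's Lemma. Under the identification $\tilde H = \Delta H \cong H$, $\Ad_{\tilde H}$-equivariance becomes $\Ad_H$-equivariance. By hypothesis, $\am_i$ is an irreducible $H$-module, inequivalent to $\am_j$ for $j \ne i$ and inequivalent to every irreducible summand of $\ah$. The $H$-equivariant isomorphisms $\iota$, $\sigma\circ\iota$, $\pi\circ\iota$ identify $\tilde\am_i$, $\tilde\am_{i+g}$, $\tilde\ap_i$ with three mutually isomorphic copies of $\am_i$. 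Schur's Lemma then implies that $\tilde P_t$ preserves the isotypic block $\tilde\am_i \oplus \tilde\am_{i+g} \oplus \tilde\ap_i$ and, when $\mathrm{End}_H(\am_i) = \R$, acts there as a real $3\times 3$ matrix on the multiplicity space tensored with $\id_{\am_i}$. Applied to $\tilde E_{i,\mu}$, such an operator returns a linear combination of $\tilde E_{i,\mu}$, $\sigma(\tilde E_{i,\mu})$, and $\pi(\tilde E_{i,\mu})$, which is exactly $\tilde\ab_{i,\mu}$, so $\tilde\ab_{i,\mu}$ is $\tilde P_t$-invariant.

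The main obstacle will be cases where $\mathrm{End}_H(\am_i) \neq \R$. For the $(4,2)$- and $(6,2)$-actions coming from the adjoint actions of $\Syp(2)$ and $\Gtwo$, $\am_i$ is complex irreducible and $\mathrm{End}_H(\am_i)=\C$, so the multiplicity matrix a priori carries complex entries and could mix $\tilde\ab_{i,\mu}$ with $\tilde\ab_{i,\mu'}$ where $E_{i,\mu'}$ is obtained from $E_{i,\mu}$ by the $H$-invariant complex structure on $\am_i$. To rule this out I would combine the $Q$-symmetry of $\tilde P_t$ with the $Q$-skewness of the invariant complex structure (realized as $\ad_T$ for an appropriate $T \in \ah$), which constrains the imaginary part of the multiplicity matrix enough to still preserve the specific real span $\tilde\ab_{i,\mu}$. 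Everything outside this case is either routine Schur bookkeeping or the direct computation above.
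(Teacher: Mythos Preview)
Your direct computations for $\tilde\ah_\pm$ and $\tilde\aq$ are essentially what the paper does (the paper phrases it as ``$\{0\}\times\aso(n)\to\aso(n)^\ast_{\vert\tilde\gamma(t)}$ is an isometry''), though strictly speaking you still need the invariance of these subspaces before reading off the scalars; that comes from the argument below.

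For the invariance of the $\tilde\ab_{i,\mu}$ the paper takes a different and much shorter route than yours. It does \emph{not} use Schur's Lemma here at all. Instead it invokes the two orthogonality lemmas already proved: the summands $\tilde\ab_{i,\mu}$, $\tilde\ah_\pm$, $\tilde\aq$ are pairwise $\tilde Q$-orthogonal in $\tilde\ag$, and their action-field images at $\tilde\gamma(t)$ are pairwise orthogonal in $T_{\tilde\gamma(t)}\SO(n+2)$. Feeding both into the defining relation $\tilde Q(\tilde P_t X,Y)=\langle X^\ast,Y^\ast\rangle_{\vert\tilde\gamma(t)}$ gives invariance in one line: if $X\in\tilde\ab_{i,\mu}$ and $Y$ lies in any other summand, the right-hand side vanishes, so $\tilde P_tX$ is $\tilde Q$-orthogonal to every other summand and hence lies in $\tilde\ab_{i,\mu}$.

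Your Schur route has a genuine gap exactly where you flag it, and the proposed fix does not close it. When $\mathrm{End}_H(\am_i)=\C$ (this occurs already for $(g,m)\in\{(3,2),(4,2),(6,2)\}$, and quaternionically for $(3,4)$), an $H$-equivariant endomorphism of the isotypic block $\tilde\am_i\oplus\tilde\am_{i+g}\oplus\tilde\ap_i$ is a $3\times 3$ matrix with entries in $\C$. Imposing $\tilde Q$-symmetry, together with the $\tilde Q$-skewness of the complex structure $J$, only forces this matrix to be Hermitian: the diagonal entries are real, but the off-diagonal entries satisfy $a_{kj}=\overline{a_{jk}}$ and can have nonzero imaginary parts. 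A nonzero $\operatorname{Im}(a_{21})$ sends $\tilde E_{i,\mu}$ partly to $\sigma(J E_{i,\mu})$, which (for $m_i=2$ with $E_{i,2}=JE_{i,1}$) lies in $\tilde\ab_{i,\mu'}$ with $\mu'\neq\mu$. So symmetry alone cannot keep $\tilde P_t$ inside $\tilde\ab_{i,\mu}$. The paper avoids this obstruction because the $\tilde Q$-orthogonality of $\tilde\ab_{i,\mu}$ and $\tilde\ab_{i,\mu'}$ for the same $i$ was obtained in the preceding lemma by an explicit matrix computation using the special form of $E_{i,\mu}$, not by representation theory.
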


\begin{proof}
Let $\tilde Q$ denote the biinvariant metric on $\tilde G = G \times \SO(n+1)$ chosen at the beginning of this section, i.e., the product metric of $\frac{1}{2} \tr X\tp Y$ on $G \subset \SO(n+2)$ with $\frac{1}{2}\tr X\tp Y$ on $\SO(n+1)$.
The $\tilde \ah_{\pm}$, and $\tilde \aq$ and all the $\tilde a_{i,\mu}$ are mutually orthogonal and their respective action spaces at any regular time are mutally orthogonal. Hence, it follows from the equation
\begin{gather*}
  \tilde Q(\tilde P_t X, Y ) = \langle X^{\ast}_{\vert\tilde\gamma(t)}, Y^{\ast}_{\vert\tilde\gamma(t)} \rangle
\end{gather*}
that the $\tilde \ah_{\pm}$, $\tilde \aq$ and each of the $\tilde a_{i,\mu}$ is $\tilde P_t$-invariant. Since $\{0\}\times \aso(n) \to \aso(n)^{\ast}_{\vert\tilde\gamma(t)}$ is an isometry for all regular times, it follows that $\tilde P_t$ is the identity on $\tilde \aq$ and that $\tilde \ah_{\pm}$ is an eigenspace to the eigenvalue $2$ of $\tilde P_t$.
\end{proof}
Theorem\,\ref{Schur} now follows from Theorem\,\ref{tanpart}.

\medskip

\subsection{The case $(g,m_0,m_1) = (4,m_0,1)$}
Let $M_{m_0+2,2}$ denote the space of real $(m_0+2)\times 2$ matrices with the norm $\abs{X}^2 = \tr X\tp X$. We consider the action
\begin{gather*}
  \OO(m_0+2) \times \OO(2) \times M_{m_0+2,2} \to M_{m_0+2,2},\quad
  \bigl( (A,B), X\bigr) \mapsto AXB^{-1}.
\end{gather*}
This action induces a $(4,m_0,1)$-action on the unit sphere in $M_{m_0+2,2}$. A normal geodesic for this action is
\begin{gather*}
  \gamma(t) = \bmat \cos t & 0 \\ 0 & \sin t \\ 0 & 0 \emat
\end{gather*}
where each zero in the last row is $0 \in \R^{m_0}$. The principal isotropy group along $\gamma$ is
\begin{gather*}
  H = \Bigl\{ \Bigl( \bmat \epsilon_1 & & \\ & \epsilon_2 & \\ & & C \emat, \bmat \epsilon_1 & \\ & \epsilon_2 \emat \Bigr) \;\Big\vert\, \epsilon_1, \epsilon_2 = \pm 1, C \in \OO(m_0)\Bigr\}.
\end{gather*}
Non-principal isotropy groups along $\gamma$ appear at the multiples of $\frac{\pi}{4}$:
\begin{xalignat*}{2}
  t &= 0 \mod \pi: & K_0 &= \Bigl\{ \Bigl( \bmat \epsilon_1 & \\ & A' \emat, \bmat \epsilon_1 & \\ & \epsilon_2 \emat \Bigr) \;\Big\vert\, \epsilon_1, \epsilon_2 = \pm 1, A' \in \OO(m_0+1)\Bigr\},\\
  t &= \tfrac{\pi}{4} \mod \pi: & K_1 &= \bigl\{ \bigl( \bmat B & \\  & C \emat, B \bigr) \;\big\vert\, B\in \OO(2), C \in \OO(m_0)\bigr\},\\
  t &= \tfrac{\pi}{2} \mod \pi: & K_2 &= \Bigl\{ \Bigl( \bmat \ast & & \ast \\ & \epsilon_2 & \\ \ast & & \ast \emat, \bmat \epsilon_1 & \\ & \epsilon_2 \emat \Bigr) \;\Big\vert\, \epsilon_1, \epsilon_2 = \pm 1\Bigr\},\\
  t &= \tfrac{3\pi}{4} \mod \pi: & K_3 &= \bigl\{ \bigl( \bmat SBS & \\  & C \emat, B \bigr) \;\big\vert\, B\in \OO(2), C \in \OO(m_0), S = \bmat 1 & \\ & -1\emat\bigr\}.
\end{xalignat*}

Via the above action $\OO(m_0+2) \times \OO(2)$ naturally becomes a subgroup of $\OO(M_{m_0+2,2})$. From now on we identify the orthogonal group $\OO(M_{m_0+2,2})$ with $\OO(2m_0+4)$ by using the basis
\begin{multline*}
  \bmat 1 & 0 \\ 0 & 0\\ 0 & 0 \emat,
  \bmat 0 & 0 \\ 0 & 1\\ 0 & 0 \emat,
  \bmat 0 & 0 \\ 1 & 0\\ 0 & 0 \emat,
  \bmat 0 & 1 \\ 0 & 0\\ 0 & 0 \emat,
  \bmat 0 & 0 \\ 0 & 0 \\ 0 & b_1 \emat, \ldots, \bmat 0 & 0 \\ 0 & 0 \\ 0 & b_{m_0} \emat,
  \bmat 0 & 0 \\ 0 & 0 \\ b_1 & 0 \emat, \ldots, \bmat 0 & 0 \\ 0 & 0 \\ b_{m_0} & 0 \emat
\end{multline*}
of $M_{m_0+2,2}$ where $b_1, \ldots, b_{m_0}$ denotes the standard basis of $\R^{m_0}$. It is straightforward to compute that the homomorphism
\begin{gather*}
  \Theta: \OO(m_0+2) \times \OO(2) \to \OO(2m_0+4)
\end{gather*}
maps the elements of the principal isotropy group $H$ as follows:
\begin{gather}
\label{isotropyfourone}
  \Bigl( \bmat \epsilon_1 & & \\ & \epsilon_2 & \\ & & C \emat, \bmat \epsilon_1 & \\ & \epsilon_2 \emat \Bigr) \mapsto
    \bmat \one_2 & & & \\ & \epsilon_1\epsilon_2 \one_2 & & \\ & & \epsilon_2 C & \\ & & & \epsilon_1 C \emat.
\end{gather}
The following statement is now evident.
\begin{lemma}
The fixed point set $(\Sph^{2m_0+3})^H$ just consists of the unparametrized normal geodesic $\gamma(\R)$.
\end{lemma}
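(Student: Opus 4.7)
The plan is to observe that the lemma reduces to a direct inspection of the fixed point set of $\Theta(H)$ acting on the ambient real vector space $M_{m_0+2,2} \cong \R^{2m_0+4}$. Using the ordered basis introduced just before formula~(\ref{isotropyfourone}), we split
\begin{gather*}
  \R^{2m_0+4} = V_1 \oplus V_2 \oplus V_3 \oplus V_4,
\end{gather*}
where $V_1, V_2 \cong \R^2$ and $V_3, V_4 \cong \R^{m_0}$, and the four blocks correspond precisely to the four diagonal blocks appearing in~(\ref{isotropyfourone}).

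The first step is to determine the fixed point set $(\R^{2m_0+4})^H$ block by block. From~(\ref{isotropyfourone}), the action of $H$ on $V_1$ is trivial, so every vector is fixed. On $V_2$, the group $H$ acts by the scalar $\epsilon_1\epsilon_2$; choosing $\epsilon_1 = 1$, $\epsilon_2 = -1$ gives $-\mathrm{id}_{V_2}$, forcing the $V_2$-component of a fixed vector to vanish. On $V_3$, the action factors through $\epsilon_2 C$ with $C \in \OO(m_0)$ and $\epsilon_2 = \pm 1$; since $\OO(m_0)$ acts on $\R^{m_0}$ with trivial fixed set, the $V_3$-component must vanish, and the same argument applies to $V_4$. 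Hence $(\R^{2m_0+4})^H = V_1$.

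The second step is to translate this back to $M_{m_0+2,2}$: the subspace $V_1$ is spanned by the first two basis vectors of the list, that is, by $\bmat 1 & 0 \\ 0 & 0 \\ 0 & 0 \emat$ and $\bmat 0 & 0 \\ 0 & 1 \\ 0 & 0 \emat$. Intersecting with the unit sphere $\Sph^{2m_0+3}$ yields the unit circle in $V_1$, which is exactly the image of the normal geodesic
\begin{gather*}
  \gamma(t) = \cos t \bmat 1 & 0 \\ 0 & 0 \\ 0 & 0 \emat + \sin t \bmat 0 & 0 \\ 0 & 1 \\ 0 & 0 \emat.
\end{gather*}
Thus $(\Sph^{2m_0+3})^H = \gamma(\R)$, as claimed.

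There is no genuine obstacle: the only point requiring any care is checking that the $V_3$ and $V_4$ summands indeed have no nonzero $\OO(m_0)$-fixed vector, which is immediate for all $m_0 \ge 1$.
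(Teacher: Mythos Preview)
Your proof is correct and follows exactly the approach the paper has in mind: the paper simply declares the lemma ``evident'' from formula~(\ref{isotropyfourone}), and you have spelled out the block-by-block inspection that makes this evidence explicit. There is nothing to add.
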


\begin{corollary}
The tangential part of the tension field of any $(k,r)$-map vanishes for the $\OO(m_0+2)\times \OO(2)$-action on $\Sph^{2m_0+3}$.
\end{corollary}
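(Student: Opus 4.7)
The plan is to combine the preceding lemma with Lemma~\ref{leminvariant}. By Lemma~\ref{leminvariant}, the tangential component of the tension field at $\gamma(t)$ lies in the $H$-fixed subspace of $T_{\gamma(t)}(G\cdot \gamma(t))$, so it suffices to show that this fixed subspace is trivial.

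First, I would identify the tangent space to the fixed-point manifold with the fixed subspace of the tangent space: since $H$ acts on $\Sph^{2m_0+3}$ by isometries, the connected component of $(\Sph^{2m_0+3})^H$ through $\gamma(t)$ is a totally geodesic submanifold whose tangent space at $\gamma(t)$ coincides with the $H$-fixed subspace $\bigl(T_{\gamma(t)}\Sph^{2m_0+3}\bigr)^H$. By the preceding lemma, this fixed-point set is the one-dimensional unparametrized normal geodesic $\gamma(\R)$, so $\bigl(T_{\gamma(t)}\Sph^{2m_0+3}\bigr)^H$ is spanned by $\dot\gamma(t)$.

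Second, I would observe that $\dot\gamma(t)$ is by construction normal to the orbit $G\cdot\gamma(t)$. Intersecting with the tangent space of the orbit gives
\begin{gather*}
  \bigl(T_{\gamma(t)}(G\cdot\gamma(t))\bigr)^H
  = \bigl(T_{\gamma(t)}\Sph^{2m_0+3}\bigr)^H \cap T_{\gamma(t)}(G\cdot\gamma(t))
  = \R\,\dot\gamma(t) \cap T_{\gamma(t)}(G\cdot\gamma(t))
  = \{0\}.
\end{gather*}
Applying Lemma~\ref{leminvariant} then forces $\tau^{\tan}_{\vert\gamma(t)} = 0$ for every $(k,r)$-map and for each regular $t$; by continuity and equivariance this extends to all of $\Sph^{2m_0+3}$.

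There is no real obstacle here: the entire content of the corollary is the verification that the $H$-fixed subspace of the tangent space equals the tangent space to $(\Sph^{2m_0+3})^H$, which is immediate from the fact that $H$ acts by isometries fixing $\gamma(t)$, combined with the one-dimensionality established in the preceding lemma via the explicit form~(\ref{isotropyfourone}) of $\Theta(H)$.
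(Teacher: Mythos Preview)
Your argument is correct and is precisely the reasoning the paper has in mind: the corollary is stated without proof because it follows immediately from the preceding lemma together with Lemma~\ref{leminvariant}, via the observation that $(T_{\gamma(t)}\Sph^{2m_0+3})^H = T_{\gamma(t)}\bigl((\Sph^{2m_0+3})^H\bigr) = \R\,\dot\gamma(t)$ has trivial intersection with the orbit tangent space. Your write-up simply makes this explicit.
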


\smallskip

The $\OO(m_0+2) \times \OO(2)$-action on $\Sph^{2m_0+3}$ lifts to the $\OO(m_0+2) \times \OO(2) \times \OO(2m_0+3)$-action on $\OO(2m_0+4)$ given by
\begin{gather*}
  (A,B,C) \cdot D = \Theta(A,B)DC^{-1}.
\end{gather*}
Note that we use the metric $\langle X,Y\rangle = \tfrac{1}{2} \tr X\tp Y$ on $\OO(2m_0+4)$. A normal geodesic is
\begin{gather*}
  \tilde \gamma(t) = \bmat \cos t & -\sin t & 0\\ \sin t & \cos t & 0\\ 0 & 0 & \one_{2m_0+2}\emat.
\end{gather*}
with principal isotropy group $\tilde H = \Delta H$.

\begin{lemma}
The fixed point set of $\tilde H$ in the tangent space $T_{\tilde\gamma(t)} \tilde G \cdot \tilde \gamma(t)$ is generated by $\tilde F^{\ast}_{\vert\tilde\gamma(t)}$ along where
\begin{gather*}
  \tilde F = \bmat  0 & -1 & & \\ 1 & 0 & & \\ & & 0_{m_0} & \\ & & & 0_{m_0} \emat \in \aso(2m_0+2) \subset \aso(2m_0+4).
\end{gather*}
\end{lemma}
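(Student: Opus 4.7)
The plan is to work on the ambient manifold $\OO(2m_0+4)$ and use the observation that the $\tilde H$-action factors through conjugation by $\Theta(H)$. By (\ref{isotropyfourone}) each $\Theta(h)$ has $\one_2$ as its top-left $2\times 2$ block, hence commutes with $\tilde\gamma(t)$ for every $t$; this lets me write $\Theta(h) = \bmat 1 & 0 \\ 0 & M(h)\emat$ for a unique $M(h) \in \OO(2m_0+3)$, and the pair $(h, M(h))$ is exactly the lift of $h$ determined by $\tilde H = \Delta H$. Each element of $\tilde H$ therefore acts on $D \in \OO(2m_0+4)$ by $D \mapsto \Theta(h) D \Theta(h)^{-1}$. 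Consequently the $\tilde H$-fixed points of $\OO(2m_0+4)$ form the centralizer $Z = Z_{\OO(2m_0+4)}(\Theta(H))$, and the $\tilde H$-fixed vectors in $T_{\tilde\gamma(t)}\OO(2m_0+4)$ are exactly $\tilde\gamma(t)\cdot\mathfrak{z}$ where $\mathfrak{z}$ is the centralizer of $\Theta(H)$ in $\aso(2m_0+4)$.

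The computation of $\mathfrak{z}$ proceeds by writing $X \in \aso(2m_0+4)$ in the $(2,2,m_0,m_0)$-block form matching $\Theta(h) = \diag(\one_2, \epsilon_1\epsilon_2\one_2, \epsilon_2 C, \epsilon_1 C)$ and requiring commutativity for all $\epsilon_i\in\{\pm 1\}$ and $C\in\OO(m_0)$. The off-diagonal block between the first two $2\times 2$ components will be killed by $\epsilon_1\epsilon_2 = -1$; the off-diagonal blocks between a $2\times 2$ component and one of the $m_0$-blocks will be killed by choosing the appropriate $\epsilon_i = -1$ with $C = \one_{m_0}$; Schur's Lemma applied to the standard irreducible $\OO(m_0)$-action on $\R^{m_0}$ will annihilate the two $m_0\times m_0$ skew diagonal blocks; and the remaining $m_0\times m_0$ off-diagonal block will die by $\epsilon_1\epsilon_2 = -1$. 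What survives is $\mathfrak{z} = \spann\{J^{(1)}, J^{(2)}\}$, where $J^{(i)}$ places $J = \bmat 0 & -1 \\ 1 & 0\emat$ in the $i$-th $2\times 2$ block position and zeros elsewhere.

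Translating to $\tilde\gamma(t)$ by left multiplication yields $\tilde\gamma(t) J^{(1)} = \dot{\tilde\gamma}(t)$, which is normal to the orbit, while $\tilde\gamma(t) J^{(2)}$ spans $\R\tilde F^{\ast}_{\vert\tilde\gamma(t)}$ because $J^{(2)}$ is precisely the image of $\tilde F$ under the embedding $\aso(2m_0+2) \hookrightarrow \aso(2m_0+4)$ used to define action fields from the $\OO(2m_0+3)$-factor of $\tilde G$. Intersecting $\tilde\gamma(t)\cdot\mathfrak{z}$ with $T_{\tilde\gamma(t)}(\tilde G\cdot\tilde\gamma(t))$ therefore drops the $\dot{\tilde\gamma}(t)$-direction and leaves exactly the claimed line. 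The main, and only nontrivial, step will be the block-commutativity computation; it is mildly tedious because of the four blocks, but presents no conceptual difficulty.
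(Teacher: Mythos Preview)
Your proof is correct and follows essentially the same route as the paper: both identify the $\tilde H$-action as conjugation by $\Theta(H)$ and compute the resulting centralizer, with the only cosmetic difference that the paper phrases the computation at the group level (finding the block-diagonal form $\diag(\ast_2,\ast_2,\pm\one_{m_0},\pm\one_{m_0})$ of the fixed subset of $\OO(2m_0+4)$ and then taking its tangent space at $\tilde\gamma(t)$) while you work directly in the Lie algebra $\aso(2m_0+4)$ and left-translate. Your block-by-block commutator argument is exactly what the paper's phrase ``using the fact that the center of $\OO(m_0)$ is $\pm\one$'' unpacks to.
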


Note that $\tilde F^{\ast}$ is a parallel vector field along $\tilde \gamma$.

\begin{proof}
Using the fact that the center of $\OO(m_0)$ is $\pm \one$ it is straightforward to verify that the fixed point set of
$\tilde H$ in $\OO(2m_0+4)$ consist of elements of the form
\begin{gather*}
  \bmat \ast_2 & & & \\ & \ast_2 & & \\ & & \pm \one_{m_0} & \\ & & & \pm \one_{m_0} \emat.
\end{gather*}
The tangent space to this fixed point set at $\tilde\gamma(t)$ is spanned by $\dot{\tilde\gamma}(t)$ and $\tilde F^{\ast}_{\vert\tilde\gamma(t)}$.
\end{proof}

The Weyl group of the $\OO(m_0+2) \times \OO(2)$-action on $\Sph^{2m_0+3}$ is generated by
\begin{gather*}
  \sigma_0 = \bigl( \bmat 1 & \\ & -\one \emat, \one \bigr) \quad \text{and} \quad
  \sigma_1 = \Bigl( \bmat 0 & -1 & \\ 1 & 0 & \\ & & \one \emat, \bmat 0 & -1 \\ 1 & 0 \emat \Bigr).
\end{gather*}
In order to obtain formulas for the two involutions $\tilde\sigma_0$ and $\tilde \sigma_1$ that generate the Weyl group of the action on $\OO(2m_0+4)$, we note that
\begin{gather*}
  \Theta(\sigma_0) = \bmat 1 & & & & \\ & -1 & & & \\ & & -1 & & \\ & & & 1 & \\ & & & & -\one \emat \quad \text{and} \quad
  \Theta(\sigma_1) = \bmat 0 & 1 & & & \\ 1 & 0 & & & \\ & & 0 & -1 & & \\ & & -1 & 0 & & \\ & & & & 0 & -\one \\ & & & & \one & 0 \emat.
\end{gather*}
For convenience, we denote $\Theta(\sigma_0)$ and $\Theta(\sigma_1)$ again by $\sigma_0$ and $\sigma_1$. The composition
\begin{gather*}
  \rho = \sigma_1\cdot \sigma_0 = \bmat 0 & -1 & & & & \\ 1 & 0 & & & & \\ & & 0 & -1 & & \\ & & 1 & 0 & & \\ & & & & 0 & -\one \\ & & & & \one & 0 \emat
\end{gather*}
is a primitive rotation of order $4$ in the dihedral Weyl group $D_4$ of the $\OO(m_0+2) \times \OO(2)$-action on $\Sph^{2m_0+3}$.
Now we have $\tilde \sigma_0 = (\sigma_0,\sigma_0)$ and $\tilde \sigma_1 = (\sigma_1,\hat\sigma_1)$ where
$\hat\sigma_1 = \tilde\gamma(-\tfrac{\pi}{4}) \sigma_1 \tilde\gamma(\tfrac{\pi}{4})$, and, hence,
\begin{gather*}
  \tilde \rho = (\rho,\hat \rho), \text{ where }
    \hat \rho = \hat \sigma_1 \cdot \sigma_0 = \bmat \one_2 & & & & \\ & 0 & -1 & & \\ & 1 & 0 & & \\ & & & 0 & -\one \\ & & & \one & 0 \emat
\end{gather*}
is a primitive rotation of order $4$ in the dihedral Weyl group $D_4$ of the $\OO(m_0+2) \times \OO(2) \times \OO(2m_0+3)$-action on $\OO(2m_0+4)$.
The following statement is now evident.

\begin{lemma}
The vector $\tilde F$ is invariant under the Weyl group rotation, i.e., $\hat \rho \tilde F \hat \rho^{-1} = \tilde F$.
\end{lemma}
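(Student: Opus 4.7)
The plan is to prove the identity by a direct block-matrix computation, observing that $\tilde F$ and $\hat\rho$ commute.

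First I would make the block structures compatible. By construction, $\tilde F \in \aso(2m_0+2)$ is embedded in $\aso(2m_0+4)$ as the lower right $(2m_0+2)\times(2m_0+2)$ block, so that, viewing it with block sizes $(2,1,1,m_0,m_0)$ matching those of $\hat\rho$, the top-left $2\times 2$ block of $\tilde F$ is $0_2$, the middle $1+1$ rows/columns carry $\bmat 0 & -1 \\ 1 & 0 \emat$, and the bottom $m_0+m_0$ block is $0_{2m_0}$. Regrouping both matrices according to the coarser partition $(2,2,2m_0)$, we see that $\hat\rho$ is block diagonal with diagonal blocks
\begin{gather*}
  \one_2,\qquad \bmat 0 & -1 \\ 1 & 0 \emat,\qquad \bmat 0 & -\one_{m_0} \\ \one_{m_0} & 0 \emat,
\end{gather*}
while $\tilde F$ is block diagonal with diagonal blocks $0_2$, $\bmat 0 & -1 \\ 1 & 0\emat$, and $0_{2m_0}$.

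Now I would just observe that in each of the three blocks the two matrices commute: the first pair because $\one_2\cdot 0_2 = 0_2\cdot \one_2$; the second pair because a matrix commutes with itself; the third pair because anything commutes with the zero matrix. Hence $\hat\rho\,\tilde F = \tilde F\,\hat\rho$, which gives $\hat\rho\,\tilde F\,\hat\rho^{-1} = \tilde F$. I do not expect any obstacle here; the only point requiring care is to line up the block decompositions of $\hat\rho$ (whose middle two blocks of sizes $1,1$ together carry the $\SO(2)$-rotation $\bmat 0 & -1 \\ 1 & 0\emat$) with those of $\tilde F$, after which the commutation is immediate.
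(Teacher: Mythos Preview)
Your proposal is correct and takes essentially the same approach as the paper, which simply declares the statement ``evident'' from the explicit matrix formulas for $\hat\rho$ and $\tilde F$. Your block-by-block verification of commutativity is exactly the intended direct check, just spelled out in more detail.
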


The Lie algebra of $G = \OO(m_0+2) \times \OO(2)$ splits orthogonally as
\begin{gather*}
  \ag = \ah \oplus \am_0 \oplus \am_1 \oplus \am_2 \oplus \am_3
\end{gather*}
where
\begin{multline*}
  \ah = \aso(m_0) \times \{0\}, \quad
  \am_0 = \Bigl\{ \Bigl( \bmat 0 & 0 & 0 \\ 0 & 0 & -u\tp \\ 0 & u & 0 \emat,  0\Bigr) \;\vert\; u \in \R^{m_0}\Bigr\}, \\
  \am_1 = \R \cdot \Bigl( \bmat 0 & -1 & 0 \\ 1 & 0 & 0 \\ 0 & 0 & 0 \emat,  \bmat 0 & -1 \\ 1 & 0 \emat\Bigr), \quad
  \am_2 = \rho \am_0 \rho^{-1}, \quad
  \am_3 = \rho \am_1 \rho^{-1}.
\end{multline*}
The principal isotropy group $H$ acts by multiplication by $\epsilon_1\epsilon_2$ on $\am_1 \oplus \am_3$, on $\am_0$ by multiplication by $\epsilon_2 C$ on $\R^{m_0}$, and on $\am_2$ by multiplication by $\epsilon_1 C$ on $\R^{m_0}$.

\smallskip

Under the derivative of the homomorphism $\Theta$ at $(\one,\one)$ the generators
of $\am_0$ and $\am_1$ are mapped as follows:

\begin{gather*}
  \xi_0(u) := d\Theta_{\vert(\one,\one)} \Bigl( \bmat 0 & 0 & 0 \\ 0 & 0 & -u\tp \\ 0 & u & 0 \emat,  0\Bigr) =
    \bmat & & & & 0 & 0 \\ & & & & -u\tp & 0 \\ & & & & 0 & -u\tp \\ & & & & 0 & 0 \\ 0 & u & 0 & 0 & & \\ 0 & 0 & u & 0 & &\emat,\\
  \xi_1 := d\Theta_{\vert(\one,\one)} \Bigl( \bmat 0 & -1 & 0\\ 1 & 0 & 0 \\ 0 & 0 & 0\emat, \bmat 0 & -1 \\ 1 & 0 \emat \Bigr) =
  \bmat 0 & 0 & -1 & -1 & & \\ 0 & 0 & 1 & 1 & & \\ 1 & -1 & 0 & 0 & & \\ 1 & -1 & 0 & 0 & & \\ & & & & 0 & \one \\ & & & & -\one & 0 \emat.
\end{gather*}
We further need
\begin{gather*}
  \hat \xi_1 := \tilde\gamma(-\tfrac{\pi}{4}) \xi_1 \tilde\gamma(\tfrac{\pi}{4})
   = \bmat 0 & 0 & 0 & 0 & & \\ 0 & 0 & \sqrt{2} & \sqrt{2} & & \\ 0 & -\sqrt{2} & 0 & 0 & & \\ 0 & -\sqrt{2} & 0 & 0 & & \\ & & & & 0 & \one \\ & & & & -\one & 0 \emat.
\end{gather*}
Now, $\tilde\am_0$ is generated by the $(\xi_0(u),\xi_0(u))$ and $\tilde\am_1$ is generated by $(\xi_1,\hat\xi_1)$.
The other $\tilde\am_i$ are given by
\begin{gather*}
  \tilde\am_{2\ell} = \tilde\rho^{\ell} \tilde\am_0 \tilde\rho^{-\ell} \quad \text{and} \quad \tilde\am_{2\ell+1} = \tilde\rho^{\ell} \tilde\am_1 \tilde\rho^{-\ell}.
\end{gather*}

\smallskip

\begin{theorem}
The tangential component of the tension field of any $(k,r)$-map vanishes for the $\OO(m_0+2)\times \OO(2)\times \OO(2m_0+3)$-action on $\OO(2m_0+4)$.
\end{theorem}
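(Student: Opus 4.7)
I follow the same two-step strategy used for the $(4, m_0, 1)$ case: reduce the tangential component to a single scalar via Lemma \ref{leminvariant} and the fixed-point computation, then evaluate that scalar with Theorem \ref{tanpart}. By Lemma \ref{leminvariant}, $\tau^{\tan}_{|\tilde\gamma(2r(t))}$ lies in the $\tilde H$-fixed subspace of the orbit tangent at $\tilde\gamma(2r(t))$, which by the lemma just established is one-dimensional and spanned by the parallel vector field $\tilde F^*$. Hence
\[
\tau^{\tan}_{|\tilde\gamma(2r(t))} = c(t)\,\tilde F^*_{|\tilde\gamma(2r(t))}
\]
for some scalar function $c(t)$, and it suffices to show $c \equiv 0$.

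To evaluate $c(t)$, I apply Theorem \ref{tanpart} with the orthonormal basis $F_1, \dots, F_n$ at $\tilde\gamma(2r(t))$ chosen so that $F_1$ is proportional to $\tilde F$. Then $c(t)$ equals, up to a positive normalization,
\[
-\sum_{\mu=1}^n \langle [E_\mu, \tilde F]^*, E_\mu^*\rangle_{|\tilde\gamma(2r(t))},
\]
where $\{E_\mu^*_{|\tilde\gamma(2t)}\}$ is an orthonormal basis of the orbit tangent at $\tilde\gamma(2t)$. I choose this basis compatible with the orthogonal splitting of the orbit tangent into the parallel distribution $\aso(n)^*_{|\tilde\gamma(2t)}$ and the Jacobi eigenspace $V_{|\tilde\gamma(2t)} = \bigoplus_{i=0}^{7} \tilde\am_{i|\tilde\gamma(2t)}^*$ from Section \ref{groups}. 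The key structural observation is that $\tilde F$ lies in $\{0\} \times \aso(n)$ and that $\ag$ commutes with $\aso(n+1)$ in $\tilde\ag = \ag \oplus \aso(n+1)$, so $[E_\mu, \tilde F] \in \{0\} \times \aso(n+1)$ for every $E_\mu$.

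For $E_\mu$'s in the parallel distribution, the bracket $[E_\mu, \tilde F]$ again lies in $\{0\} \times \aso(n)$ because $\aso(n)$ is a subalgebra; its action field is parallel, and the resulting inner product equals $\tilde Q([E_\mu, \tilde F], E_\mu)$, which vanishes by skew-symmetry of $\ad_{\tilde F}$ with respect to the bi-invariant metric $\tilde Q$. The main obstacle is the contribution from $E_\mu$'s in the Jacobi summands $\tilde\am_i$: here the second component of $[E_\mu, \tilde F]$ may have a nontrivial first row and column, so its action field at $\tilde\gamma(2r(t))$ acquires a $V$-part which is not automatically orthogonal to $E_\mu^*_{|\tilde\gamma(2r(t))} \in V$. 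To cancel these remaining contributions, I would exploit the Weyl rotation $\tilde\rho$, which cyclically permutes the $\tilde\am_i$ and fixes $\tilde F$ by the lemma above. Organizing the sum over the four $\tilde\rho$-orbits of summands and computing each $V$-part explicitly from the matrix expressions for $\xi_0(u)$, $\xi_1$, and $\hat\xi_1$ established above, one obtains cancellation within each orbit and therefore $c(t) \equiv 0$.
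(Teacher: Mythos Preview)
Your overall strategy is exactly the paper's: reduce via Lemma~\ref{leminvariant} to the single coefficient in front of $\tilde F^{\ast}$, take an orthonormal basis compatible with $\tilde\am_0\oplus\cdots\oplus\tilde\am_7\oplus\aso(2m_0+2)$, and handle the $\aso(2m_0+2)$-block by skew-symmetry of $\ad_{\tilde F}$. That part is fine.

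The gap is in how you dispose of the $\tilde\am_i$ contributions. You anticipate nontrivial $V$-parts that ``cancel within each Weyl orbit''. But the Weyl rotation $\tilde\rho$ does not produce sign cancellations here: since $\tilde\rho$ fixes $\tilde F$, conjugating a summand by $\tilde\rho$ only shifts the evaluation point along the normal geodesic, i.e.\ it turns $\langle[E_\mu,\tilde F]^{\ast},E_\mu^{\ast}\rangle_{|\tilde\gamma(2r(t))}$ into the \emph{same} expression at $\tilde\rho^{-1}\cdot\tilde\gamma(2r(t))$. Summing over an orbit therefore gives a periodicity statement, not vanishing. So the mechanism you propose does not close the argument.

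What the paper actually does at this step is the explicit bracket computation you defer, and the outcome is stronger than cancellation: each term is individually zero. One checks directly that $[\xi_0(u),\tilde F]\in\aso(2m_0+2)$ (the first row and column are in fact zero), so by $\tilde\rho$-invariance of $\tilde F$ and of $\aso(2m_0+2)$ one gets $[\tilde\am_{2\ell},\tilde F]\subset\aso(2m_0+2)$, orthogonal to $\tilde\am_{2\ell}^{\ast}$. Similarly one computes $[\hat\xi_1,\tilde F]=\tfrac{1}{2}(\hat\rho\hat\xi_1\hat\rho^{-1}-\hat\rho^{3}\hat\xi_1\hat\rho^{-3})$, which gives $[\tilde\am_{2\ell+1},\tilde F]\subset\tilde\am_{2\ell+3}\oplus\tilde\am_{2\ell+7}$, again orthogonal to $\tilde\am_{2\ell+1}^{\ast}$ at every regular time by Corollary~\ref{orthogonalDecomp}. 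Hence every summand $\langle[\tilde E_\mu,\tilde F]^{\ast},\tilde E_\mu^{\ast}\rangle_{|\tilde\gamma(2r(t))}$ vanishes separately. Carrying out those two short matrix computations is the missing piece; once you do them, no orbit-cancellation argument is needed.
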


\begin{proof}
Let $N = \dim \OO(2m_0+4)-1$. We take $N$ vectors $\tilde E_1, \ldots, \tilde E_N$ compatible with the sum
\begin{gather*}
  \tilde\am_0 \oplus \ldots \oplus \tilde \am_{7} \oplus \aso(2m_0+2)
\end{gather*}
such that the 
$\tilde E_1^{\ast}, \ldots, \tilde E_N^{\ast}$ are orthonormal at $\tilde \gamma(t)$ and such that $\tilde E_N = \tilde F$.
By Lemma\,\ref{leminvariant} and Theorem\,\ref{tanpart} we have
\begin{gather*}
  \tau^{\tan}_{\vert\gamma(t)} = -\sum_{\mu=1}^N \langle [\tilde E_{\mu},\tilde F]^{\ast},\tilde E_{\mu}^{\ast} \rangle_{\vert\gamma(r(t))} \tilde F^{\ast}_{\vert\gamma(r(t))}.
\end{gather*}
It is now straightforward to compute that
\begin{gather*}
  [\xi_0(u), \tilde F] = \bmat & & & & 0 & 0 \\ & & & & 0 & 0 \\ & & & & 0 & 0 \\ & & & & 0 & u\tp \\ 0 & 0 & 0 & 0 & & \\ 0 & 0 & 0 & -u & &\emat
  \in \aso(2m_0+2) \subset \aso(2m_0+4).
\end{gather*}
By the invariance of $\tilde F$ and $\aso(2m_0+2)$ under the Weyl group rotation we get
\begin{gather*}
  [\tilde \am_{2\ell}, \tilde F] \subset \aso(2m_0+2).
\end{gather*}
Another straightforward computation shows
\begin{gather*}
  [\hat\xi_1, \tilde F] = \tfrac{1}{2} (\hat\rho \hat\xi_1 \hat\rho^{-1} - \hat\rho^3\hat \xi_1 \hat\rho^{-3}).
\end{gather*}
Hence,
\begin{gather*}
  [\tilde \am_{2\ell+1}, \tilde F] \subset \tilde\am_{2\ell+3} \oplus \tilde\am_{2\ell+7}.
\end{gather*}
It follows that
\begin{gather*}
  \langle [\tilde E_{\mu},\tilde F]^{\ast},\tilde E_{\mu}^{\ast} \rangle_{\vert\gamma(r(t))} = 0
\end{gather*}
if $\tilde E_{\mu}$ is contained in some $\tilde \am_i$. The same is true for $\tilde E_{\mu}$ in $0 \times \aso(2m_0+2)$. Indeed, $[\tilde E_{\mu},\tilde F]$ is perpendicular to $\tilde E_{\mu}$ in $\aso(2m_0+2)$ since $\ad_{\tilde F}$ is skew-symmetric, and $\aso(2m_0+2) \to \aso(2m_0+2)^{\ast}_{\tilde\gamma(r(t))}$ is an isometry. All in all, we have shown that $\tau^{\tan} = 0$.
\end{proof}

\smallskip

\subsection{The case $(g,m) = (6,1)$}
\label{sixone}
Let $\H = \C \oplus j\C$ denote the algebra of quaternions, $\Syp(1)$ the group of unit quaternions, and $\Syp(2)$ the group of quaternionic $2\times 2$-matrices $A$ with $\bar A\tp A = \one$. We consider the homomorphism
\begin{gather*}
  \vartheta: \Syp(1) \to \Syp(2), \quad a + jb \mapsto \bmat \bar a^3 + j \bar b^3 & \sqrt{3} (\bar ab^2+ja^2\bar b)\\
    \sqrt{3} (\bar a\bar b^2 + j\bar a^2 \bar b) & \;a (\abs{a}^2 - 2\abs{b}^2) + j b(\abs{b}^2 - 2\abs{a}^2) \emat
\end{gather*}
and the action of $\Syp(1)\times \Syp(1)^3$ on $\Sph^7 \subset \H^2$
\begin{gather*}
  (q_1,q_2)\cdot u = \vartheta(q_1)u \bar q_2.
\end{gather*}
This is the up to equivalence unique $(6,1)$-action on $\Sph^7$. Note that the ineffective kernel of this action is $\{\pm(1,1)\}$.

\smallskip

A normal geodesic for this action is
\begin{gather*}
  \gamma(t) = \bmat \cos t\\ \sin t\emat.
\end{gather*}
 The principal isotropy along $\gamma$ is
\begin{gather*}
  H = \{\pm (1,1), \pm (i,i), \pm (j,j), \pm (k,k)\}.
\end{gather*}
Non-principal isotropy groups along $\gamma$ appear at the multiples of $\tfrac{\pi}{6}$: 
\begin{xalignat*}{2}
  t &= 0 \mod \pi: & K_0 &= \{ (e^{it},e^{-3it})\} \cup \{ j(e^{it},e^{-3it})\},\\
  t &= \tfrac{\pi}{6} \mod \pi: & K_1 &= \{ (e^{jt},e^{jt})\} \cup \{k(e^{jt},e^{jt})\},\\
  t &= \tfrac{\pi}{3} \mod \pi: & K_2 &= \{ (e^{kt},e^{-3kt})\} \cup \{ i(e^{kt},e^{-3kt})\},\\
  t &= \tfrac{\pi}{2} \mod \pi: & K_3 &= \{ (e^{it},e^{it})\} \cup \{ j(e^{it},e^{it})\},\\
  t &= \tfrac{2\pi}{3} \mod \pi: & K_4 &= \{ (e^{jt},e^{3jt})\} \cup \{k(e^{jt},e^{-3jt})\},\\
  t &= \tfrac{5\pi}{6} \mod \pi: & K_5 &= \{ (e^{kt},e^{kt})\} \cup \{ i(e^{kt},e^{kt})\}.
\end{xalignat*}
Via the above action $\Syp(1) \times \Syp(1)$ becomes a subgroup of $\SO(\H^2)$.
From now on we use the basis
\begin{gather*}
  \bmat 1\\0 \emat, \bmat 0 \\ 1\emat, \bmat i\\0 \emat, \bmat 0 \\ i\emat, \bmat j\\0 \emat, \bmat 0 \\ j\emat, \bmat -k\\0 \emat, \bmat 0 \\ -k\emat
\end{gather*}
of $\H^2$ to identify $\SO(\H^2)$ with $\SO(8)$. The homomorphism
\begin{gather*}
  \Theta: \Syp(1) \times \Syp(1) \to \SO(8)
\end{gather*}
maps the elements of the principal isotropy group $H$ as follows:
\begin{multline*}
  \pm (1,1) \mapsto \one_8, \quad
  \pm (i,i) \mapsto \bmat \one_4 & \\ & -\one_4 \emat,\\
  \pm (j,j) \mapsto \bmat \one_2 & & & \\ & -\one_2 & & \\ & & \one_2 & \\ & & & -\one_2 \emat,
  \pm (k,k) \mapsto \bmat  \one_2 & & & \\ & -\one_2 & & \\ & & -\one_2 & \\ & & & \one_2 \emat.
\end{multline*}
The following statement is now evident.

\begin{lemma}
The fixed point set $(\Sph^7)^H$ just consists of the unparametrized normal geodesic $\gamma(\R)$.
\end{lemma}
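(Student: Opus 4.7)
The plan is to reduce the claim to an elementary computation inside $\H$. First I would verify that for each $q \in \{1, i, j, k\}$ the defining formula of $\vartheta$ gives
$$\vartheta(q) = q \cdot \one_2 \in \Syp(2).$$
For $q = 1$ this is immediate; for $q = i$ (so $a = i$, $b = 0$) one checks $\bar a^3 = (-i)^3 = i$ and $a(|a|^2 - 2|b|^2) = i$; and similarly for $q = j$ (with $a = 0$, $b = 1$) and $q = k$ (with $a = 0$, $b = -i$). Consequently the element $(q,q) \in H$ acts on $(u_1,u_2) \in \H^2$ by
$$(q,q)\cdot(u_1,u_2) = (q u_1 \bar q, q u_2 \bar q),$$
i.e., by componentwise conjugation by $q$.

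Next I would analyze the fixed sets of the two generators modulo sign. Conjugation by $i$ on $\H$ fixes $\spann_\R\{1,i\} = \C$ and negates $\spann_\R\{j,k\}$; conjugation by $j$ fixes $\spann_\R\{1,j\}$ and negates $\spann_\R\{i,k\}$. Therefore $u_r \in \H$ is fixed by both $(i,i)$ and $(j,j)$ if and only if
$$u_r \in \spann_\R\{1,i\} \cap \spann_\R\{1,j\} = \R.$$
The element $(-1,-1)$ acts trivially (it lies in the ineffective kernel) and $(k,k)$ is, up to the signs already dealt with, the product $(i,i)(j,j)$, so no additional constraint arises from the remaining elements of $H$. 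Hence the fixed set of $H$ on $\H^2$ is precisely $\R^2 \subset \H^2$.

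Finally, intersecting with $\Sph^7 \subset \H^2$ yields the unit circle $\Sph^1 \subset \R^2$, and since the normal geodesic is $\gamma(t) = (\cos t, \sin t) \in \R^2$, its image is exactly this circle. This gives $(\Sph^7)^H = \gamma(\R)$ as claimed. I do not anticipate any genuine obstacle: the only slightly fiddly step is the direct verification $\vartheta(q) = q\one_2$ on $\{i,j,k\}$, and after that the argument is a one-line intersection of coordinate subspaces of $\H$.
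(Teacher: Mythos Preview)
Your argument is correct and is essentially the same computation the paper carries out, just packaged differently. The paper records the images $\Theta(\pm(q,q))\in\SO(8)$ as explicit diagonal $\pm 1$ matrices in the chosen real basis of $\H^2$ and then reads off that only the first two coordinates (those of $\bmat 1\\0\emat$ and $\bmat 0\\1\emat$) are simultaneously fixed; you instead verify $\vartheta(q)=q\,\one_2$ for $q\in\{1,i,j,k\}$ and argue via quaternionic conjugation that the common fixed set in each $\H$-factor is $\spann_\R\{1,i\}\cap\spann_\R\{1,j\}=\R$. These are the same fact in two coordinate systems: the diagonal sign patterns in the paper's matrices are precisely the eigenvalues of conjugation by $i$, $j$, $k$ on the basis $1,i,j,-k$ of each $\H$-component. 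Your route is slightly more self-contained for this lemma alone, while the paper's matrix form is set up because it is reused immediately afterwards for the lifted action on $\SO(8)$.
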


\begin{corollary}
The tangential part of the tension field of any $(k,r)$-map vanishes for the $\Syp(1)\times\Syp(1)$-action on $\Sph^7$ vanishes.
\end{corollary}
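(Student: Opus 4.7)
The plan is simply to combine the preceding lemma, which computes $(\Sph^7)^H = \gamma(\R)$, with Lemma \ref{leminvariant}. By that lemma, the tangential component $\tau^{\tan}_{\vert\gamma(t)}$ is contained in the $H$-fixed subspace $(T_{\gamma(t)}(G\cdot\gamma(t)))^H$ of the orbit tangent space, so it suffices to prove that this subspace reduces to $\{0\}$.

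For this I would invoke the standard fact that, for an isometric action of a compact Lie group $H$ on a Riemannian manifold, the tangent space at a fixed point of the fixed-point submanifold coincides with the $H$-fixed subspace of the ambient tangent space. Applied at $\gamma(t)$, the preceding lemma gives $(T_{\gamma(t)}\Sph^7)^H = T_{\gamma(t)}\gamma(\R) = \R\dot\gamma(t)$. Since $\dot\gamma(t)$ is normal to the principal orbit $G\cdot\gamma(t)$, the $H$-invariant orthogonal decomposition
\[
T_{\gamma(t)}\Sph^7 = \R\dot\gamma(t)\oplus T_{\gamma(t)}(G\cdot\gamma(t))
\]
shows that taking $H$-fixed subspaces yields $(T_{\gamma(t)}(G\cdot\gamma(t)))^H = \{0\}$.

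Feeding this into Lemma \ref{leminvariant} gives $\tau^{\tan}_{\vert\gamma(t)} = 0$ for every regular $t$, and by the equivariance of the tension field (already exploited in the proof of Lemma \ref{leminvariant}) this extends to all of $\Sph^7$. There is essentially no obstacle: the substantive content has been absorbed into the preceding lemma, which in turn rests on the elementary observation that the four diagonal matrices $\one_8$, $\diag(\one_4,-\one_4)$, $\diag(\one_2,-\one_2,\one_2,-\one_2)$, $\diag(\one_2,-\one_2,-\one_2,\one_2)$ listed just above the lemma have common fixed subspace equal to the $\R^2$ spanned by the first two basis vectors, and intersecting this $\R^2$ with $\Sph^7$ is precisely the circle $\gamma(\R)$.
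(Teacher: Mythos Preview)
Your proposal is correct and follows exactly the reasoning the paper intends: the corollary is stated without proof precisely because it is the immediate combination of the preceding lemma with Lemma~\ref{leminvariant}, and you have spelled out this deduction accurately. The only (very minor) remark is that since $H$ acts linearly on $\R^8$ via $\Theta$, one can bypass the general Riemannian fixed-point-set fact and argue directly that $(\R^8)^H$ is the linear span of the first two basis vectors, hence $(T_{\gamma(t)}\Sph^7)^H = \R\dot\gamma(t)$.
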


\smallskip

The $\Syp(1)\times\Syp(1)$-action on $\Sph^7$ lifts to the $\Syp(1)\times\Syp(1)\times \SO(7)$-action on $\SO(8)$ given by
\begin{gather*}
  (q_1,q_2,B) \cdot C = \Theta(q_1,q_2) C B^{-1}.
\end{gather*}
Note that we use the metric $\langle X,Y\rangle = \tfrac{1}{2} \tr X\tp Y$ on $\SO(8)$. A normal geodesic is
\begin{gather*}
  \tilde \gamma(t) = \bmat \cos t & -\sin t & 0\\ \sin t & \cos t & 0\\ 0 & 0 & \one_6\emat
\end{gather*}
with principal isotropy group $\tilde H = \Delta H$.

\begin{lemma}
The fixed point set of $\tilde H$ in the tangent space $T_{\tilde\gamma(t)} \tilde G \cdot \tilde \gamma(t)$ is generated by the three vectors $\tilde F^{\ast}_{1\vert\tilde\gamma(t)}$, $\tilde F^{\ast}_{2\vert\tilde\gamma(t)}$, $\tilde F^{\ast}_{3\vert\tilde\gamma(t)}$ where
\begin{gather*}
  \tilde F_1 := \bmat 0 & & & \\ & J & & \\ & & 0 & \\ & & & 0\emat, \quad
  \tilde F_2 := \bmat 0 & & & \\ & 0 & & \\ & & J & \\ & & & 0\emat, \quad
  \tilde F_3 := \bmat 0 & & & \\ & 0 & & \\ & & 0 & \\ & & & J\emat \in \aso(6) \subset \aso(8)
\end{gather*}
with $J = \bmat 0 & -1\\ 1 & 0\emat$.
\end{lemma}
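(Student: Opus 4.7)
The plan is to first determine the full fixed point set of $\tilde H$ in $\SO(8)$, then extract its tangent space at $\tilde\gamma(t)$, and finally delete the normal direction $\dot{\tilde\gamma}(t)$. The only mildly subtle point is identifying the $\tilde H$-action explicitly; thereafter everything reduces to block-matrix bookkeeping.

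First I would observe that every $\Theta(h)$ with $h\in H$ fixes $\gamma(t)$ pointwise, so it has the block form $\one_2\oplus M'_h$ with $M'_h\in\SO(6)$. This forces $\tilde\gamma(t)^{-1}\Theta(h)\tilde\gamma(t)=\Theta(h)$, and hence the action of $\tilde H=\Delta H$ on $\SO(8)$ reduces to conjugation $C\mapsto\Theta(h)C\Theta(h)^{-1}$. The image $\Theta(H)$ is the Klein four-group generated by the three commuting diagonal involutions $M_1,M_2,M_3$ with the $\pm 1$ entries recorded above, and the triples of their eigenvalues distinguish four joint two-dimensional eigenspaces $V_1,V_2,V_3,V_4$ of $\R^8$, namely the spans of the four consecutive coordinate pairs. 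Any $C\in\SO(8)$ commuting with $M_1,M_2,M_3$ must preserve each $V_i$, so the fixed point set equals $(\OO(2))^4\cap\SO(8)$, a Lie subgroup of dimension four.

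Since $\tilde\gamma(t)=R(t)\oplus\one_6$ lies in this centralizer, its tangent space there is $\tilde\gamma(t)\cdot(\aso(2))^4$, spanned by the four block generators $J\oplus 0\oplus 0\oplus 0$, $0\oplus J\oplus 0\oplus 0$, $0\oplus 0\oplus J\oplus 0$, $0\oplus 0\oplus 0\oplus J$ with $J=\bmat 0&-1\\1&0\emat$. The first generator produces the normal vector $\dot{\tilde\gamma}(t)$, while the remaining three are exactly the matrices $\tilde F_1,\tilde F_2,\tilde F_3$ of the statement. Each $\tilde F_i$ has vanishing first $2\times 2$ block, so formula $(\ref{son})$ from Section~\ref{groups} yields $\tilde F_i^{\ast}_{\vert\tilde\gamma(t)}=\tilde\gamma(t)\cdot\tilde F_i=\tilde F_i$; in particular the $\tilde F_i^{\ast}$ are parallel along $\tilde\gamma$.

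To conclude, the tangent space $T_{\tilde\gamma(t)}\SO(8)$ splits $\tilde H$-invariantly and orthogonally as $\R\,\dot{\tilde\gamma}(t)\oplus T_{\tilde\gamma(t)}(\tilde G\cdot\tilde\gamma(t))$, with the normal line pointwise fixed. Removing $\R\,\dot{\tilde\gamma}(t)$ from the four-dimensional fixed space identified above leaves exactly the three-dimensional subspace spanned by $\tilde F_1^{\ast}_{\vert\tilde\gamma(t)},\tilde F_2^{\ast}_{\vert\tilde\gamma(t)},\tilde F_3^{\ast}_{\vert\tilde\gamma(t)}$, which is the assertion.
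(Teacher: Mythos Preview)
Your proposal is correct and follows essentially the same approach as the paper's proof: identify the fixed point set of $\tilde H$ in $\SO(8)$ as the block-diagonal matrices with four $2\times 2$ blocks, take the tangent space at $\tilde\gamma(t)$, and discard the normal direction $\dot{\tilde\gamma}(t)$. The paper compresses the first step into the phrase ``can easily be seen,'' whereas you spell out the centralizer argument via the joint eigenspaces of the three diagonal involutions in $\Theta(H)$; this added detail is helpful and accurate.
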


Note that $\tilde F_1^{\ast}$, $\tilde F_2^{\ast}$, and $\tilde F_3^{\ast}$ are orthonormal parallel vector fields along $\tilde \gamma$.

\begin{proof}
The fixed point set of $\tilde H$ in $\SO(8)$ can easily be seen to consist of elements of the form
\begin{gather*}
  \bmat \ast_2 & & & \\ & \ast_2 & & \\ & & \ast_2 & \\ & & & \ast_2 \emat.
\end{gather*}
The tangent space to this fixed point set at $\tilde\gamma(t)$ is spanned by
$\dot{\tilde\gamma}(t) = \bmat J & & & \\ & 0 & & \\ & & 0 & \\ & & & 0\emat$, $\tilde F^{\ast}_{1\vert\tilde\gamma(t)}$,
$\tilde F^{\ast}_{2\vert\tilde\gamma(t)}$, and $\tilde F^{\ast}_{3\vert\tilde\gamma(t)}$.
\end{proof}

\smallskip

The Weyl group of the $\Syp(1)\times\Syp(1)$-action on $\Sph^7$ is generated by
\begin{gather*}
  \sigma_0 = (e^{i\pi/4},e^{-i3\pi/4}) \quad\text{and}\quad \sigma_1 = (e^{j\pi/4},e^{j\pi/4}).
\end{gather*}
In order to obtain formulas for the two involutions $\tilde\sigma_0$ and $\tilde \sigma_1$ that generate the Weyl group of the action on $\SO(8)$, we note that
\begin{gather*}
  \Theta(\sigma_0) = \bmat S & 0 & 0 & 0\\ 0 & S & 0 & 0\\ 0 & 0 & 0 & S \\ 0 & 0 & -S & 0 \emat \quad \text{and} \quad
  \Theta(\sigma_1) = \bmat RS & 0 & 0 & 0 \\ 0 & 0 & 0 & -RS \\ 0 & 0 & RS & 0 \\ 0 & RS & 0 & 0 \emat
\end{gather*}
where $S = \bmat 1 & 0\\ 0 & -1 \emat$ and $R$ is the matrix for a counter-clockwise rotation by an angle of $\pi/3$. For convenience, we denote $\Theta(\sigma_0)$ and $\Theta(\sigma_1)$ again by $\sigma_0$ and $\sigma_1$. The composition
\begin{gather*}
  \rho = \sigma_1\cdot \sigma_0 = \bmat R & 0 & 0 & 0\\ 0 & 0 & R & 0 \\ 0 & 0 & 0 & R\\ 0 & R & 0 & 0\emat
\end{gather*}
is a primitive rotation of order $6$ in the dihedral Weyl group $D_6$ of the $\Syp(1)\times \Syp(1)$-action on $\Sph^7$.
Now we have $\tilde \sigma_0 = (\sigma_0,\sigma_0)$ and $\tilde \sigma_1 = (\sigma_1,\hat\sigma_1)$ where
$\hat\sigma_1 = \tilde\gamma(-\tfrac{\pi}{6}) \sigma_1 \tilde\gamma(\tfrac{\pi}{6})$, and, hence,
\begin{gather*}
  \tilde \rho = (\rho,\hat \rho), \text{ where }
    \hat \rho = \hat \sigma_1 \cdot \sigma_0 = \bmat \one & 0 & 0 & 0 \\ 0 & 0 & R & 0\\ 0 & 0 & 0 & R\\ 0 & R & 0 & 0\emat
\end{gather*}
is a primitive rotation of order $6$ in the dihedral Weyl group $D_6$ of the $\Syp(1)\times \Syp(1)\times \SO(7)$-action on $\SO(8)$. The following statement can be easily verified.

\begin{lemma}
The abelian Lie subalgebra $\tilde \af$ of $\aso(6)$ generated by $\tilde F_1$, $\tilde F_2$ and $\tilde F_2$ is invariant under the Weyl group rotation $\tilde \rho$. More precisely,
\begin{gather*}
  \hat \rho \tilde F_1 \hat \rho^{-1} = \tilde F_3, \quad \hat \rho \tilde F_2 \hat \rho^{-1} = \tilde F_1, \quad \hat \rho \tilde F_3 \hat \rho^{-1} = \tilde F_2.
\end{gather*}
\end{lemma}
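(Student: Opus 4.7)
The statement is essentially a block-matrix computation. My plan is to view $\hat \rho$ as a block permutation matrix acting on a decomposition of $\R^8$ into four $2$-dimensional blocks, and to exploit that the nontrivial blocks of $\hat \rho$ are rotations in $\SO(2)$, which commute with $J$.

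First, I would dispose of the abelian claim by inspection: the three matrices $\tilde F_1, \tilde F_2, \tilde F_3$ are block diagonal in $\aso(8)$ with $J$ placed in mutually disjoint $2\times 2$ block positions, hence they commute pairwise. For the invariance statement, decompose $\R^8 = V_1 \oplus V_2 \oplus V_3 \oplus V_4$ according to the block structure used in the definitions of $\tilde F_i$ and $\hat \rho$. Reading off directly from the formula for $\hat \rho$, it acts as a block permutation in which $V_1$ is fixed by $\one$, while on the remaining blocks it sends $V_3 \mapsto V_2$, $V_4 \mapsto V_3$, and $V_2 \mapsto V_4$, each via the rotation $R$. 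In other words, $\hat \rho$ maps the $j$-th block into block $\pi(j)$, where $\pi$ has cycle structure $(1)(2\,4\,3)$.

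Next, for any block diagonal matrix $M$ with $2\times 2$-blocks $M_1, M_2, M_3, M_4$ along the diagonal, a direct computation using $\hat \rho^{-1} = \hat \rho\tp$ shows that $\hat \rho M \hat \rho^{-1}$ is again block diagonal, with the block in position $(\pi(i),\pi(i))$ given by $R\, M_i \,R^{-1}$ for $i \in \{2,3,4\}$ (and the first block simply preserved). Applying this with $M = \tilde F_1$, which has $J$ only in the $(2,2)$-block, produces $J$ in the $(4,4)$-block, i.e.\ $\tilde F_3$, because $R J R^{-1} = J$ in the abelian group $\SO(2)$. The computations for $\tilde F_2$ and $\tilde F_3$ are identical up to relabelling via $\pi$, and yield $\tilde F_1$ and $\tilde F_2$ respectively. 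The invariance of $\tilde \af = \spann\{\tilde F_1, \tilde F_2, \tilde F_3\}$ under $\mathrm{Ad}_{\hat\rho}$ is then immediate.

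There is no real obstacle in the argument: once the cycle structure of $\pi$ is read off correctly from the explicit form of $\hat \rho$ and one observes that rotation blocks commute with $J$, the three identities drop out mechanically. The mildly delicate point is simply to verify the direction of the permutation from the matrix (distinguishing $\hat\rho$ from $\hat\rho^{-1}$) so that the conjugation lands in the intended target block.
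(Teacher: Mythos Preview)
Your argument is correct and is exactly the direct block-matrix verification that the paper alludes to when it says ``the following statement can be easily verified''; there is no separate proof in the paper beyond this remark, and your reading of the block permutation $(1)(2\,4\,3)$ together with $RJR^{-1}=J$ reproduces the three identities precisely.
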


Since $H$ is finite, the Lie algebra of $G = \Syp(1) \times \Syp(1)$ splits orthogonally as
\begin{gather*}
  \ag = \am_0 \oplus \am_1 \oplus \am_2 \oplus \am_3 \oplus \am_4 \oplus \am_5
\end{gather*}
and we have $\am_i = \ak_i$ for all $i$. Hence,
\begin{gather*}
  \am_0 = \R\cdot (i,-3i), \quad
  \am_1 = \R\cdot (j,j), \quad
  \am_2 = \R\cdot (k,-3k), \\
  \am_3 = \R\cdot (i,i), \quad
  \am_4 = \R\cdot (j,-3j), \quad
  \am_5 = \R\cdot (k,k).
\end{gather*}
The principal isotropy group $H$ acts by conjugation on $\am$. The elements $\pm (i,i)$ act trivially on $\am_0\oplus \am_3$ and by $-\id$ on the other $\am_i$,
$\pm (j,j)$ act trivially on $\am_1\oplus\am_4$ and by $-\id$ on the other $\am_i$, and $\pm (k,k)$ act trivially on $\am_2\oplus \am_5$ and by $-\id$ on the other $\am_i$.

\smallskip

Under the derivative of the homomorphism $\Theta: \Syp(1) \times \Syp(1) \to \SO(8)$ at $(1,1)$ the generators $(i,-3i)$ of $\am_0$ and $(j,j)$ of $\am_1$ are mapped to
\begin{gather*}
  \xi_0 := d\Theta_{\vert(1,1)}(i,-3i) = \bmat & &  0 & 0 &  & &  & \\ & &  0 & -4 &  & &  & \\ 0 & 0 &  & &  & &  & \\ 0 & 4 &  & &  & &  & \\
  & &  & &  & & -6 & 0\\ & &  & &  & &  0 & -2 \\ & &  & &  6 & 0 &  & \\ & &  & &  0 & 2 &  & \emat
\end{gather*}
and
\begin{gather*}
  \xi_1 := d\Theta_{\vert(1,1)}(j,j) = \bmat & &  & &  1 & -\sqrt{3} &  & \\ & &  & &  -\sqrt{3} & 3 &  & \\ & &  & &  & &  -1 & -\sqrt{3}\\ & &  & &  & &  -\sqrt{3} & 1\\
  -1 & \sqrt{3} &  & &  & &  & \\ \sqrt{3} & -3 &  & &  & &  & \\ & &  1 & \sqrt{3} &  & &  & \\ & &  \sqrt{3} & -1 &  & &  & \emat.
\end{gather*}
We further need
\begin{gather*}
  \hat \xi_1 := \tilde\gamma(-\tfrac{\pi}{6}) \xi_1 \tilde\gamma(\tfrac{\pi}{6})
   = \bmat & &  & &  0 & 0 &  & \\ & &  & &  -2 & 2\sqrt{3} &  & \\ & &  & &  & &  -1 & -\sqrt{3}\\ & &  & &  & &  -\sqrt{3} & 1\\
  0 & 2 &  & &  & &  & \\ 0 & -2\sqrt{3} &  & &  & &  & \\ & &  1 & \sqrt{3} &  & &  & \\ & &  \sqrt{3} & -1 &  & &  & \emat.
\end{gather*}
Now, the Lie algebra $\tilde\ak_0 = \tilde\am_0$ of $\tilde K_0$ is generated by $(\xi_0,\xi_0)$ and the Lie algebra $\tilde\ak_1 = \tilde\am_1$ of $\tilde K_1$ is generated by $(\xi_1,\hat\xi_1)$. The Lie algebras $\tilde\ak_i = \tilde\am_i$ of the other singular isotropy groups are given by
\begin{gather*}
  \tilde\am_{2\ell} = \tilde\rho^{\ell} \tilde\am_0 \tilde\rho^{-\ell} \quad \text{and} \quad \tilde\am_{2\ell+1} = \tilde\rho^{\ell} \tilde\am_1 \tilde\rho^{-\ell}.
\end{gather*}

\begin{theorem}
The tangential component of the tension field of any $(k,r)$-map vanishes for the $\Syp(1)\times\Syp(1)\times \SO(7)$-action on $\SO(8)$.
\end{theorem}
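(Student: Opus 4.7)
The strategy follows the one just used for the $(4,m_0,1)$-case, with the only new feature that the fixed point set is now three-dimensional, spanned by the orthonormal parallel fields $\tilde F_1^*, \tilde F_2^*, \tilde F_3^*$. By Lemma \ref{leminvariant} together with the description of the fixed point set of $\tilde H$ just established, $\tau^{\tan}$ lies in the span of the $\tilde F_\nu^*_{\vert\tilde\gamma(2r(t))}$, so by Theorem \ref{tanpart} it suffices to prove that for each $\nu \in \{1,2,3\}$ the scalar
\begin{gather*}
  c_\nu := \sum_{\mu=1}^{27} \langle [\tilde E_\mu, \tilde F_\nu]^*, \tilde E_\mu^*\rangle_{\tilde\gamma(2r(t))}
\end{gather*}
vanishes, where $\{\tilde E_\mu\}$ is any orthonormal basis of $T_{\tilde\gamma(2r(t))}(\tilde G\cdot\tilde\gamma(2r(t)))$. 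I take this basis compatible with the orthogonal splitting $\tilde\am_0^* \oplus \tilde\am_1^* \oplus \cdots \oplus \tilde\am_{11}^* \oplus \aso(6)^*$ recorded in Section \ref{groups} (Corollary \ref{orthogonalDecomp} together with equation (\ref{son})), placing $\tilde F_1^*,\tilde F_2^*,\tilde F_3^*$ themselves among the $\aso(6)^*$-vectors.

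The $\aso(6)^*$-contribution to $c_\nu$ vanishes directly: $\aso(6)\subset\aso(7)$ is a subalgebra containing $\tilde F_\nu$, and the map $\aso(6)\to\aso(6)^*_{\vert\tilde\gamma(s)}$ is an isometry, so this partial sum reduces to $\sum_\mu Q([\tilde E_\mu,\tilde F_\nu],\tilde E_\mu)$, which vanishes by skew-symmetry of $\ad_{\tilde F_\nu}$ with respect to the biinvariant form $Q$.

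For the $\tilde\am_i^*$-contributions I exploit the Weyl group symmetry established in the preceding lemma: the rotation $\tilde\rho = (\rho,\hat\rho)$ conjugates $\tilde\am_i$ to $\tilde\am_{i+2}$ and cyclically permutes $\tilde F_1,\tilde F_2,\tilde F_3$. It therefore suffices to analyze the two generators $(\xi_0,\xi_0)$ of $\tilde\am_0$ and $(\xi_1,\hat\xi_1)$ of $\tilde\am_1$ and to translate by $\tilde\rho^\ell$. The plan, in exact parallel with the identity $[\hat\xi_1,\tilde F] = \tfrac{1}{2}(\hat\rho\hat\xi_1\hat\rho^{-1} - \hat\rho^3\hat\xi_1\hat\rho^{-3})$ used in the $(4,m_0,1)$-case, is to compute $[\xi_0,\tilde F_\nu]$ and $[\hat\xi_1,\tilde F_\nu]$ in $\aso(8)$ for each $\nu\in\{1,2,3\}$ and verify that their associated action fields have vanishing component in $\tilde\am_0^*$, respectively $\tilde\am_1^*$. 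Concretely, one expects $[\hat\xi_1,\tilde F_\nu]$ to be an $\R$-linear combination of the conjugates $\hat\rho^a\hat\xi_1\hat\rho^{-a}$ with $a\in\{1,\ldots,5\}$ only (no $a=0$), placing its action field in $\bigoplus_{a\ne 0}\tilde\am_{2a+1}^*$; the analogous statement for $[\xi_0,\tilde F_\nu]$ places the action field in the orthogonal complement of $\tilde\am_0^*$. The summed contribution to $c_\nu$ then vanishes by the mutual orthogonality of the summands in the splitting.

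The main obstacle is the concrete matrix bookkeeping. Unlike the $(4,m_0,1)$-case, in which a single commutator with the unique fixed direction $\tilde F$ already completed the analysis, here one has three fixed directions $\tilde F_1,\tilde F_2,\tilde F_3$ and must compute three commutators with each of $\xi_0$ and $\hat\xi_1$. Because neither $\xi_0$ nor $\hat\xi_1$ is $\hat\rho$-invariant --- conjugation by $\hat\rho$ moves $\xi_0$ to a generator of $\tilde\am_2$ and $\hat\xi_1$ to a generator of $\tilde\am_3$ --- the three cases $\nu=1,2,3$ are genuinely three separate computations and cannot be reduced to a single one by cyclic symmetry. Once the six commutators are evaluated, expanded in a $\hat\rho$-invariant basis of $\aso(7)$, and the $\hat\rho^0$-coefficients checked to vanish, the vanishing of $c_\nu$ --- and hence of $\tau^{\tan}$ for every $(k,r)$-map --- follows exactly as in the preceding subsection.
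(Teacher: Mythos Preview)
Your outline is essentially the paper's own proof: reduce via Lemma~\ref{leminvariant} and Theorem~\ref{tanpart} to showing $\langle[\tilde E_\mu,\tilde F_\nu]^\ast,\tilde E_\mu^\ast\rangle_{\vert\tilde\gamma(r(t))}=0$ for a basis adapted to $\tilde\am_0^\ast\oplus\cdots\oplus\tilde\am_{11}^\ast\oplus\aso(6)^\ast$, dispose of the $\aso(6)$-part by skew-symmetry of $\ad_{\tilde F_\nu}$, and handle $\tilde\am_0,\tilde\am_1$ by explicit commutator computation propagated to all $\tilde\am_i$ via the $\tilde\rho$-invariance of $\tilde\af=\spann\{\tilde F_1,\tilde F_2,\tilde F_3\}$. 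Two small corrections: by Theorem~\ref{tanpart} the $\tilde E_\mu^\ast$ are to be orthonormal at $\tilde\gamma(2t)$, not at $\tilde\gamma(2r(t))$ (harmless here, since each term vanishes individually); and your guess about the target of the brackets is off in detail --- the paper finds, for instance, $[\xi_0,\tilde F_1]=\tfrac12(\hat\rho\,\xi_1\hat\rho^{-1}-\hat\rho^4\xi_1\hat\rho^{-4})$ while $[\xi_0,\tilde F_2],[\xi_0,\tilde F_3]\in\aso(6)$, so the brackets switch parity or drop into $\aso(6)$ rather than staying among conjugates of the same generator --- but in every case they land orthogonal to the diagonal summand, so your conclusion stands.
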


\begin{proof}
We first note that $\dim \SO(8)-1 = 27$. We take $27$ vectors $\tilde E_1, \ldots, \tilde E_{27}$ compatible with the sum
\begin{gather*}
  \tilde\am_0 \oplus \ldots \oplus \tilde \am_{11} \oplus \aso(6)
\end{gather*}
such that
$\tilde E_1^{\ast}, \ldots, \tilde E_{27}^{\ast}$ are orthonormal at $\tilde \gamma(t)$ and such that $\tilde E_{25} = \tilde F_1$, $\tilde E_{26} = \tilde F_2$, $\tilde E_{27} = \tilde F_3$.
By Theorem\,\ref{tanpart} and Lemma\,\ref{leminvariant} we have
\begin{gather*}
  \tau^{\tan}_{\vert\gamma(t)} = -\sum_{\nu = 1}^3\sum_{\mu=1}^{27} \langle [\tilde E_{\mu},\tilde F_{\nu}]^{\ast},\tilde E_{\mu}^{\ast} \rangle_{\vert\gamma(r(t))} \tilde F^{\ast}_{\nu\vert\gamma(r(t))}.
\end{gather*}
It is now straightforward to compute that
\begin{gather*}
  [\xi_0, \tilde F_1] = \tfrac{1}{2} (\hat\rho \xi_1 \hat\rho^{-1} - \hat\rho^4\xi_1 \hat\rho^{-4})
\end{gather*}
and that $[\xi_0, \tilde F_2]$ and $[\xi_0, \tilde F_3]$ are contained in $\aso(6)$. Hence,
$[\tilde \am_0, \tilde \af] \subset \tilde\am_3 \oplus \tilde\am_9 \oplus\aso(6)$.
Since $\tilde\af$ and $\tilde\aso(6)$ are both invariant under conjugation by $\tilde\rho$ this implies
\begin{gather*}
  [\tilde \am_{2\ell}, \tilde \af] \subset \tilde\am_{2\ell+3} \oplus \tilde\am_{2\ell+9} \oplus\aso(6).
\end{gather*}
Similarly,
\begin{gather*}
  [\hat\xi_1, \tilde F_2] = \tfrac{1}{2} (\hat\rho^2 \xi_0 \hat\rho^{-2} - \hat\rho^5\xi_0 \hat\rho^{-5})
\end{gather*}
and $[\hat\xi, \tilde F_1]$ and $[\hat\xi_1, \tilde F_3]$ are contained in $\aso(6)$. Hence,
\begin{gather*}
  [\tilde \am_{2\ell+1}, \tilde \af] \subset \tilde\am_{2\ell+4} \oplus \tilde\am_{2\ell+10} \oplus\aso(6).
\end{gather*}
It follows that
\begin{gather*}
  \langle [\tilde E_{\mu},\tilde F_{\nu}]^{\ast},\tilde E_{\mu}^{\ast} \rangle_{\vert\gamma(r(t))} = 0
\end{gather*}
if $\tilde E_{\mu}$ is contained in some $\tilde \am_i$. The same is true for $\tilde E_{\mu}$ in $0 \times \aso(6)$. Indeed, $[\tilde E_{\mu},\tilde F_{\nu}]$ is perpendicular to $\tilde E_{\mu}$ in $\aso(6)$ since $\ad_{\tilde F_{\nu}}$ is skew-symmetric, and $\aso(6) \to \aso(6)^{\ast}_{\tilde\gamma(r(t))}$ is an isometry. All in all, we have shown that $\tau^{\tan} = 0$.
\end{proof}

\bigskip

\section{A trigonometric identity}
\label{trigidentity}
The following identity and its derivative are used to evaluate the tension fields of the $(k,r)$-maps for the $(g,m)$- and $(g,m_0,m_1)$-actions on $\Sph^{n+1}$ and $\SO(n+2)$.

\begin{lemma}
\label{2para}
For every non-zero integer $g$ and all $r,t\in\R$ we have
\begin{gather*}
  \sum_{i=0}^{g-1}\frac{\sin^2(r-i\tfrac{\pi}{g})}{\sin^2(t-i\tfrac{\pi}{g})} \sin^2 gt
  = g \bigl( (g-1)\sin^2(r-t) + \sin^2(r+(g-1)t)\bigr).
\end{gather*}
\end{lemma}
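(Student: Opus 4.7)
The plan is to decouple the $r$-dependence from the summation index $i$ by writing $r - i\pi/g = (r-t) + (t - i\pi/g)$ and applying the cosine addition formula. The left-hand side then splits into a linear combination of three sums in $i$ that involve only $t$, each of which admits an elementary closed form.

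Explicitly, from $\sin^2 x = \tfrac12(1 - \cos 2x)$ together with $\cos 2(a+b) = \cos 2a\cos 2b - \sin 2a\sin 2b$, I would first write
\begin{gather*}
\sin^2(r - i\tfrac{\pi}{g}) = \tfrac12 - \tfrac12 \cos 2(r-t)\cos 2(t-i\tfrac{\pi}{g}) + \tfrac12 \sin 2(r-t)\sin 2(t-i\tfrac{\pi}{g}).
\end{gather*}
Dividing by $\sin^2(t - i\pi/g)$, summing in $i$, and multiplying by $\sin^2 gt$ reduces the LHS to a combination of the three $t$-only sums
\begin{gather*}
 \sum_i \frac{\sin^2 gt}{\sin^2(t-i\tfrac\pi g)}, \quad \sum_i \frac{\sin^2 gt\,\cos 2(t-i\tfrac\pi g)}{\sin^2(t-i\tfrac\pi g)}, \quad \sum_i \frac{\sin^2 gt \,\sin 2(t-i\tfrac\pi g)}{\sin^2(t-i\tfrac\pi g)}.
\end{gather*}
Differentiating the standard cotangent identity $g\cot gt = \sum_i \cot(t - i\pi/g)$ recalled earlier in the paper gives the first sum as $g^2$. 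The identity $\cos 2x = 1 - 2\sin^2 x$ then makes the second sum equal to $g^2 - 2g\sin^2 gt$, while $\sin 2x/\sin^2 x = 2\cot x$ together with the cotangent identity evaluates the third as $2g\sin gt\cos gt$.

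Substituting these three evaluations back, the left-hand side collapses to
\begin{gather*}
\tfrac{g^2}{2}(1 - \cos 2(r-t)) + g\sin gt\bigl(\sin gt\cos 2(r-t) + \cos gt\sin 2(r-t)\bigr) = g^2\sin^2(r-t) + g\sin gt\,\sin(2(r-t)+gt).
\end{gather*}
The final step is to apply the product-to-sum identity $\sin A\sin B = \tfrac12(\cos(A-B) - \cos(A+B))$ with $A = gt$ and $B = 2(r-t)+gt$, so that $A+B = 2(r+(g-1)t)$. Converting the resulting cosines back to squared sines via $\cos 2x = 1 - 2\sin^2 x$ rearranges the expression into $g(g-1)\sin^2(r-t) + g\sin^2(r+(g-1)t)$, as required. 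The computation is entirely routine; the only mildly non-obvious point is the initial regrouping $r - i\pi/g = (r-t) + (t - i\pi/g)$, which is what lets the $r$-dependence factor out of the sum over $i$ and reduces the problem to three elementary one-variable identities.
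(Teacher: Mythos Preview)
Your proof is correct and follows essentially the same approach as the paper: both expand the numerator via $\sin^2 x = \tfrac12(1-\cos 2x)$ after the shift $r - i\pi/g = (r-t) + (t-i\pi/g)$, reduce to the same three $t$-only sums, and evaluate them with the standard cotangent identity and its derivative. The only cosmetic difference is that the paper introduces $\rho = r-t$ explicitly and phrases the argument as matching the coefficients of $1$, $\cos 2\rho$, $\sin 2\rho$ on both sides, whereas you compute the left-hand side in closed form and then simplify to the right-hand side via a product-to-sum identity; the substance is identical.
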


\begin{proof}
Substituting $\rho = r-t$ the identity above transforms into the equivalent identity
\begin{gather}
\label{transformed}
  \sum_{i=0}^{g-1}\frac{\sin^2(\rho+t-i\tfrac{\pi}{g})}{\sin^2(t-i\tfrac{\pi}{g})} \sin^2 gt
  = g \bigl( (g-1)\sin^2\rho + \sin^2(\rho+gt)\bigr).
\end{gather}
Notice that both sides of this equation are functions of the form $c + A\cos 2\rho + B\sin 2\rho$.
Hence, the goal is to show that the coefficients $c$, $A$, and $B$ are the same on both sides.

We first transform the right-hand side of the equation. Expanding $\sin^2 \rho$ to $\frac{1}{2}(1-\cos 2\rho)$
and $\sin^2 (\rho+gt)$ analogously  and applying the addition formulas for the cosine function yields
\begin{gather*}
  g \bigl( (g-1)\sin^2\rho + \sin^2(\rho+gt)\bigr)
  = \frac{g^2}{2} - \frac{g}{2}(g-2\sin^2 gt) \cos 2\rho - \frac{g}{2} (\sin 2gt) \sin 2\rho.
\end{gather*}
On the left-hand side we similarly obtain
\begin{multline*}
  \sum_{i=0}^{g-1}\frac{\sin^2(\rho+t-i\tfrac{\pi}{g})}{\sin^2(t-i\tfrac{\pi}{g})} \sin^2 gt\\
  = \frac{1}{2} \sum_{i=0}^{g-1}\frac{\sin^2 gt}{\sin^2(t-i\tfrac{\pi}{g})}
   - \frac{1}{2} \sum_{i=0}^{g-1}\frac{\sin^2 gt}{\sin^2(t-i\tfrac{\pi}{g})} \,\cos 2(t-i\tfrac{\pi}{g})\,\cos 2\rho\\
   - \frac{1}{2} \sum_{i=0}^{g-1}\frac{\sin^2 gt}{\sin^2(t-i\tfrac{\pi}{g})} \,\sin 2(t-i\tfrac{\pi}{g})\, \sin 2\rho.
\end{multline*}

In order to see that the coefficients are the same on both sides, we use the standard cotangent identity
\begin{gather}
\label{cot}
  g\cot gt = \sum_{i=0}^{g-1} \cot(t-i\tfrac{\pi}{g})
\end{gather}
Differentiating this identity on both sides yields
\begin{gather*}
  g^2 = \sum_{i=0}^{g-1}\frac{\sin^2 gt}{\sin^2(t-i\tfrac{\pi}{g})}
\end{gather*}
which shows that the constant terms on both sides of (\ref{transformed}) are the same.
Now, the coefficients of $\cos 2\rho$ in (\ref{transformed}) are the same since
\begin{multline*}
  g (g-2\sin^2 gt) = \sum_{i=0}^{g-1}\frac{\sin^2 gt}{\sin^2(t-i\tfrac{\pi}{g})}
    - 2\sum_{i=0}^{g-1}\frac{\sin^2 gt}{\sin^2(t-i\tfrac{\pi}{g})}\sin^2(t-i\tfrac{\pi}{g})\\
    = \sum_{i=0}^{g-1}\frac{\sin^2 gt}{\sin^2(t-i\tfrac{\pi}{g})} \,\cos 2(t-i\tfrac{\pi}{g}).
\end{multline*}
Finally, multiplication of the standard cotangent identity by $2\sin gt$ shows that the coefficients of $\sin 2\rho$
in (\ref{transformed}) are the same.
\end{proof}

\begin{lemma}
\label{2parac}
For every nonzero integer $g$ and all $r,t\in\R$ we have
 \begin{align*}
  \sum_{i=0}^{g-1}\frac{\sin 2(r-i\frac{\pi}{g})}{\sin^2(t-i\tfrac{\pi}{g})}\sin^2(gt) =
  g \big((g-1)\sin 2(r-t) +\sin 2(r+(g-1)t)\big).
 \end{align*}
\end{lemma}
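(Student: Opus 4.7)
The plan is to obtain Lemma \ref{2parac} as a direct consequence of Lemma \ref{2para} by differentiation in the variable $r$. Both sides of the identity in Lemma \ref{2para} are smooth functions of $r$ and $t$, and the factor $\sin^2 gt$ on the left and the coefficients $(g-1)$ and $g$ on the right are independent of $r$, so differentiating under the finite sum is unproblematic.

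Concretely, I would first apply $\partial/\partial r$ to the left-hand side of Lemma \ref{2para}, giving
\begin{gather*}
  \frac{\partial}{\partial r}\sum_{i=0}^{g-1}\frac{\sin^2(r-i\tfrac{\pi}{g})}{\sin^2(t-i\tfrac{\pi}{g})} \sin^2 gt
  = \sum_{i=0}^{g-1}\frac{2\sin(r-i\tfrac{\pi}{g})\cos(r-i\tfrac{\pi}{g})}{\sin^2(t-i\tfrac{\pi}{g})} \sin^2 gt,
\end{gather*}
and then use the double-angle identity $2\sin\alpha\cos\alpha = \sin 2\alpha$ inside each summand to obtain exactly the left-hand side of Lemma \ref{2parac}.

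Next I would differentiate the right-hand side of Lemma \ref{2para} in $r$:
\begin{gather*}
  \frac{\partial}{\partial r}\,g\bigl((g-1)\sin^2(r-t) + \sin^2(r+(g-1)t)\bigr)
  = g\bigl((g-1)\sin 2(r-t) + \sin 2(r+(g-1)t)\bigr),
\end{gather*}
again by the double-angle identity, which is precisely the right-hand side of Lemma \ref{2parac}. Since Lemma \ref{2para} is an identity in $r$ and $t$, its two sides have equal $r$-derivatives, and the claim follows.

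There is essentially no obstacle here: all the real work was already carried out in the proof of Lemma \ref{2para}, where the cotangent identity (\ref{cot}) and its derivative were used to match the three Fourier-type coefficients in $\rho = r-t$. The only thing to double-check is that the differentiation on the left can indeed be pulled inside the sum, which is immediate because the sum is finite and each summand is smooth in $r$ wherever $\sin(t-i\tfrac{\pi}{g}) \neq 0$; at the exceptional values of $t$ both sides of Lemma \ref{2parac} extend continuously, so the identity holds everywhere by continuity.
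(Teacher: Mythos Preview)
Your proof is correct and follows exactly the paper's approach: the paper simply states that Lemma~\ref{2parac} follows by differentiating the identity of Lemma~\ref{2para} with respect to~$r$.
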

\begin{proof}
Differentiation of the identity of Lemma\,\ref{2para} with respect to $r$.
\end{proof}

\bigskip

\nocite{*}

\end{document}